\documentclass[11pt,letterpaper]{amsart}
\usepackage{amsmath}
\usepackage{amstext}
\usepackage{amssymb}
\usepackage{amsfonts}
\usepackage{enumerate}
\usepackage{mathrsfs}
\usepackage{braket}
\usepackage{color}
\usepackage[all]{xy}
\usepackage{url}
\usepackage{bbm}
\usepackage{hyperref}

\numberwithin{equation}{section}

\newtheoremstyle{note}
{1em}
{1em}
{}
{}
{\bfseries}
{:}
{.5em}
{}

\newtheorem{theorem}{Theorem}[section]
\newtheorem{lemma}[theorem]{Lemma}
\newtheorem{proposition}[theorem]{Proposition}
\newtheorem{corollary}[theorem]{Corollary}

\theoremstyle{note}
\newtheorem{remark}[theorem]{Remark}

\newtheorem{problem}[theorem]{Problem}
\newtheorem{definition}[theorem]{Definition}

\newtheorem{claim}[theorem]{Claim}
\newtheorem{subclaim}[theorem]{Subclaim}

\newcommand{\fbs}{{\mathscr{F}}}

\newcommand{\dash}{\text{-}}

\newcommand{\N}{{\mathbb{N}}}
\newcommand{\R}{{\mathbb{R}}}
\newcommand{\C}{{\mathbb{C}}}
\newcommand{\Q}{{\mathbb{Q}}}

\newcommand{\K}{{\mathbb{K}}}
\newcommand{\n}[1]{ \left\|#1\right\| }
\newcommand{\tn}[1]{{\left\vert\kern-0.25ex\left\vert\kern-0.25ex\left\vert #1 
    \right\vert\kern-0.25ex\right\vert\kern-0.25ex\right\vert}}

\newcommand{\eps}{\varepsilon}

\newcommand{\Span}{{\mathrm{span}}}
\newcommand{\M}{{\mathrm{M}}}

\newcommand{\cU}{{\mathcal{U}}}
\newcommand{\cF}{{\mathcal{F}}}
\newcommand{\cB}{{\mathcal{B}}}

\DeclareMathOperator{\cb}{cb}
\DeclareMathOperator{\CB}{CB}
\DeclareMathOperator{\Lip}{Lip}

\DeclareMathOperator{\diag}{diag}

\DeclareMathOperator{\MIN}{MIN}
\DeclareMathOperator{\MAX}{MAX}

\title[Lipschitz geometry of operator spaces]{Lipschitz geometry of operator spaces and Lipschitz-free operator spaces}

\author[B. M. Braga]{Bruno M. Braga}
\address[B. M. Braga]{PUC-Rio, Rua Marquês de São Vicente 225, Rio de Janeiro, RJ, Brazil.}

\email{demendoncabraga@gmail.com}
\urladdr{https://sites.google.com/site/demendoncabraga/}

\author[J. A. Ch\'avez-Dom\'inguez]{Javier Alejandro Ch\'avez-Dom\'inguez}
\address[J. A. Ch\'avez-Dom\'inguez]{Department of Mathematics, University of Oklahoma, Norman, OK 73019-3103,
USA} \email{jachavezd@ou.edu}
\urladdr{http://www.math.ou.edu/~jachavezd}

\author[T. Sinclair]{Thomas Sinclair}
\address[T. Sinclair]{Mathematics Department, Purdue University, $150$ N. University Street, West Lafayette, IN $47907$-$2067$}
\email{tsincla@purdue.edu}
\urladdr{http://www.math.purdue.edu/~tsincla/}

\thanks{B. M. Braga was partially supported by NSF grant DMS-2054860.}

\thanks{J. A.  Ch\'avez-Dom\'inguez was partially supported by NSF grant DMS-1900985.}

\thanks{T. Sinclair was partially supported by NSF grants DMS-1600857 and DMS-2055155.}

\subjclass[2010]{Primary: 47L25, 46L07; Secondary: 46B80} 

\begin{document}
\maketitle

\begin{abstract}
We show that there is an operator space notion of Lipschitz embeddability between operator spaces which is strictly weaker than its linear counterpart but which is still strong enough to impose linear restrictions on     operator space structures. This shows that there is a  nontrivial theory of nonlinear geometry for operator spaces and it answers a question in \cite{BragaChavezDominguez2020PAMS}. For that, we  introduce the operator space version of   Lipschitz-free Banach spaces and  prove several properties of it. In particular, we show that separable operator spaces satisfy a sort of isometric Lipschitz-lifting property in the sense of   G. Godefroy and N.  Kalton. Gateaux differentiability of Lipschitz maps in the operator space category is also studied. 
 \end{abstract}

\section{Introduction}\label{SectionIntro}

The study of   nonlinear maps between Banach spaces and how the linear geometry of those spaces are preserved by such maps  dates back to the famous Mazur-Ulam theorem (\cite{MazurUlam1932}). Since then, several different types of nonlinear embeddings and equivalences between Banach spaces have been proven to yield a   rich  theory (we refer the reader to the monographs \cite{BenyaminiLindenstraussBook,OstrovskiiBook2013}). The nonlinear theory becomes particularly interesting when considering  embeddings/equivalences given by maps which are   (1)  coarse, (2)  uniformly continuous, and (3)  Lipschitz.\footnote{Notice that, if $f:X\to Y$ is a map between Banach spaces, the following implications hold: $f$ is linear and bounded $\Rightarrow$ $f$ is Lipschitz  $\Rightarrow$ $f$ is uniformly continuous $\Rightarrow$ $f$ is coarse.}   Besides to functional analysts, this line of research is of interest to theoretical computer scientists (\cite{AilonChazelle2009Siam,BrinkmanCharikar2005}) and operator algebraists working with the Novikov conjecture (\cite{KasparovYu2006}).

This paper concerns the noncommutative counterpart of this theory, the study of which has only recently started to be  investigated (see \cite{BragaChavezDominguez2020PAMS}). For that, it is necessary to find   ``correct'' definitions  of nonlinear morphisms between operator spaces (in order not to extend this introduction too much, we refer the reader to Section \ref{SectionIntro} for the basics of operator spaces). Just as \emph{bounded} maps between Banach spaces give rise to \emph{completely bounded} maps between operator spaces, \emph{coarse} maps also have a natural  	 ``complete version'':    a map $f:X\to Y$ between operator spaces is \emph{completely coarse} if for all $r>0$ there is $s>0$ so that for all $ n\in\N$ the $n$-th amplification $f_n:\M_n(X)\to \M_n(Y)$ satisfies \[ \big\|[x_{ij}]-[y_{ij}]\big\|_{\M_n(X)}\leq r\Rightarrow \big\|f_n([x_{ij}])-f_n([y_{ij}])\big\|_{\M_n(Y)}\leq s\] for all   $[x_{ij}],[y_{ij}]\in \M_n(X)$   
(recall that a map between Banach spaces is \emph{coarse} precisely if the above holds for $n=1$). Although this definition is very natural, the main result of \cite{BragaChavezDominguez2020PAMS} shows that it is not the ``correct'' one. Precisely:\footnote{Throughout this introduction, all operator spaces are considered to be over the complex field. We point out however that all the main results of this paper remain valid for real operator spaces with unchanged proofs.}

 \begin{theorem}\cite[Theorem 1.1]{BragaChavezDominguez2020PAMS}
Let $X$ and $Y$ be  operator spaces, and let $f: X \to Y$ be
completely coarse. If $f(0)= 0$, then $f$ is $\R$-linear.
\label{ThmCompleteCoarseIsRLinear}
\end{theorem}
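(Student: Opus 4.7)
The strategy is to exploit the uniformity in $n$ of the completely coarse hypothesis: the fact that a single modulus $s = s(r)$ works across all amplifications is so rigid that it forces linearity. The plan is to first obtain a Lipschitz-at-$0$ estimate via a cheap amplification, then leverage similar constructions to extract homogeneity and additivity separately.

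\textbf{Step 1 (Lipschitz at $0$).} Fix $x \in X$ and let $\mathbf{1}_n \in \M_n$ denote the all-ones matrix, so $\|\mathbf{1}_n\| = n$. Consider $a_n = \tfrac{1}{n}\,\mathbf{1}_n \otimes x \in \M_n(X)$. Then $\|a_n\| = \|x\|$, independent of $n$. Because $f_n$ acts entrywise and every entry of $a_n$ equals $x/n$, we have $f_n(a_n) = \mathbf{1}_n \otimes f(x/n)$, whose norm is $n\,\|f(x/n)\|$. Applying the completely coarse hypothesis with $f(0)=0$ yields a constant $s = s(\|x\|)$, independent of $n$, such that $n \,\|f(x/n)\| \le s$, that is, $\|f(x/n)\| = O(1/n)$. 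Refining this with $tx$ in place of $x$ and arbitrary scalings gives a directional Lipschitz estimate near $0$.

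\textbf{Step 2 (Homogeneity).} Use the same kind of amplification, but now to compare $f(tx)$ with $t f(x)$ for any $t \in \R$. A natural matrix to consider is a block-diagonal amplification of the form $\tfrac{1}{n}(\mathbf{1}_n \otimes x) \oplus \tfrac{1}{n}(\mathbf{1}_n \otimes (tx))$, together with a comparison matrix whose entries involve $(1+t)x$; alternatively, one can run the Step 1 argument inside a larger scaling and pass to the limit using the uniform bound. Any failure of $f(tx) = tf(x)$ would amplify under $f_n$ to a norm growing with $n$, contradicting uniform-in-$n$ control by $s$. This yields $\R$-homogeneity.

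\textbf{Step 3 (Additivity).} For $x,y \in X$, apply the trick to a block matrix such as
\[
b_n = \frac{1}{n} \begin{pmatrix} \mathbf{1}_n \otimes x & \mathbf{1}_n \otimes y \\ 0 & 0 \end{pmatrix} \in \M_{2n}(X),
\]
which has norm bounded by $\|x\| + \|y\|$, and compare with an analogous matrix encoding $x+y$. Computing $f_{2n}$ entrywise, the resulting block norms involve $\|f(x/n)\|$, $\|f(y/n)\|$, and $\|f((x+y)/n)\|$; together with the Lipschitz-at-$0$ estimate, the uniform bound in $n$ forces $f(x+y) = f(x) + f(y)$ in the limit. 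Combined with Step 2 this gives $\R$-linearity.

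\textbf{Main obstacle.} Step 1 is quick, but isolating the correct comparison matrices in Steps 2 and 3 is the delicate part: one needs amplifications whose norms are bounded independently of $n$ \emph{on both sides} of the comparison, yet whose entrywise structure still cleanly separates the desired algebraic identity $f(tx)=tf(x)$ or $f(x+y)=f(x)+f(y)$ from irrelevant terms. The interplay between the entrywise action of $f_n$ and the operator space norm, which is not determined by the entry norms alone, is the crux, and the uniformity in $n$ is exactly what lets one pass from approximate equalities to equalities by letting $n \to \infty$.
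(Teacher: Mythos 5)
This theorem is quoted from \cite{BragaChavezDominguez2020PAMS} and not proved in the present paper, so I assess your argument against the known proof. Your Step 1 is correct: with $\tfrac1n\mathbf 1_n\otimes x$ denoting the matrix all of whose entries are $x/n$, one has $\|\tfrac 1n \mathbf 1_n \otimes x\|_{\M_n(X)} = \|x\|$ while $\|f_n(\tfrac1n\mathbf 1_n\otimes x)\| = n\|f(x/n)\|$, so the uniform modulus gives $\|f(x/n)\| \le s(\|x\|)/n$. The genuine gap is in Steps 2 and 3, and it is precisely the one you flag yourself: the comparison matrices you actually name cannot detect the algebraic defects. The block-diagonal sum $\tfrac1n(\mathbf1_n\otimes x)\oplus\tfrac1n(\mathbf 1_n\otimes tx)$ maps under $f_{2n}$ to another block-diagonal matrix whose norm is the maximum of the two block norms; there is no cross term, so no relation between $f(x/n)$ and $f(tx/n)$ is ever created. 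Likewise your $b_n$ in Step 3 is a row of two blocks, and $\|f_{2n}(b_n)\| = n\|f(x/n)f(x/n)^*+f(y/n)f(y/n)^*\|^{1/2}$ is automatically $O(1)$ by Step 1, for any comparison matrix ``encoding $x+y$'' that keeps the input distance bounded; nothing forces additivity. Finally, ``forces $f(x+y)=f(x)+f(y)$ in the limit'' presupposes continuity or a quantitative decay you have not established: a coarse map need not be continuous, so approximate identities at scale $1/n$ do not upgrade to exact identities at scale $1$ for free.

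The missing idea is a pattern whose operator norm is $o(n)$ while its entries still separate $x$ from $y$. Take a sign matrix $\epsilon\in\M_n$ with $\|\epsilon\|_{\M_n}=\sqrt n$ (a Walsh matrix for $n=2^k$), set $m = \tfrac{x+y}2$ and $d = \tfrac{x-y}2$, and compare $A=\mathbf 1_n\otimes m+\epsilon\otimes d$ (every entry of which is $x$ or $y$) with $B=\mathbf 1_n\otimes m$. Then $\|A-B\|=\sqrt n\,\|d\|$, while
\[
f_n(A)-f_n(B)=\mathbf 1_n\otimes\Big(\tfrac{f(x)+f(y)}2-f(m)\Big)+\epsilon\otimes\tfrac{f(x)-f(y)}2
\]
has norm at least $n\big\|\tfrac{f(x)+f(y)}2-f(m)\big\|-\sqrt n\,\big\|\tfrac{f(x)-f(y)}2\big\|$. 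Since an equi-coarse family of maps defined on Banach spaces admits a common affinely bounded modulus, the coarse estimate bounds the left-hand side by $O(\sqrt n)$; dividing by $n$ and letting $n\to\infty$ yields the exact midpoint identity $f(\tfrac{x+y}2)=\tfrac{f(x)+f(y)}2$. Together with $f(0)=0$ and the local boundedness from your Step 1, this gives $\R$-linearity by the classical fact that a locally bounded midpoint-affine map is $\R$-affine. This is essentially the route of \cite[Theorem 1.1]{BragaChavezDominguez2020PAMS}; your plan of extracting homogeneity and additivity separately via direct sums and row blocks does not go through as written.
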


 Notice that one should have no hope of recovering $\C$-linear maps from nonlinear ones; not even from $\R$-linear ones. Indeed,  there are nonisomorphic $\C$-Banach spaces which are isomorphic as $\R$-Banach spaces (\cite{Bourgain1986PAMS}). Moreover,   there are even $\C$-Banach spaces which are $\R$-linearly isomorphic to each other but totally incomparable as complex spaces  (\cite[Theorem 1]{Ferenczi2007Advances}). So $\R$-linearity is indeed sharp in Theorem  \ref{ThmCompleteCoarseIsRLinear}.

As completely coarse maps are automatically $\R$-affine (Theorem \ref{ThmCompleteCoarseIsRLinear}),  a different kind of nonlinear morphism must be considered in order to develop a (nontrivial)  nonlinear theory for operator spaces.  In order to remedy this situation, \cite{BragaChavezDominguez2020PAMS} proposed the nonlinearization of \emph{almost complete isomorphic embeddings}  instead of the usual \emph{complete isomorphic embeddings}. Precisely:

 \begin{definition}\cite[Definition   4.1]{BragaChavezDominguez2020PAMS}
Let $X$ and $Y$ be operator spaces and $\K\in\{\R,\C\}$. A sequence  $(f^n:X\to Y)_n$ is an \emph{almost complete $\K$-isomorphic embedding} if each $f^n$ is $\K$-linear and there is $K>0$ so that each amplification   $f^n_n:\M_n(X)\to \M_n(Y) $  is a $\K$-isomorphic embedding  with distortion at most $K$.    If $K=1$,   $(f^n:X\to Y)_n$ is an \emph{almost complete $\K$-linearly isometric embedding}.\label{DefiAlmCompIsoEmb}
\end{definition} 

Almost complete isomorphic embeddability is clearly weaker than complete isomorphic embeddability and, by  \cite[Theorem  4.2]{BragaChavezDominguez2020PAMS}, it is actually strictly weaker. Moreover,  Definition \ref{DefiAlmCompIsoEmb} has natural nonlinearizations. However, in order for a nonlinear type of embedding   to be relevant, two conditions must hold: 
\begin{enumerate}[(I)]
\item\label{Item1} the nonlinear embedding  must   still be strong enough  to   recover some linear aspects of the operator space structures of the spaces, and 
\item\label{Item2}   the nonlinear embedding   must be strictly weaker than almost complete isomorphic embeddability.
\end{enumerate}

In this paper, we deal with the following Lipschitz version of Definition \ref{DefiAlmCompIsoEmb}.

 \begin{definition}\label{Defi2}
Let $X$ and $Y$ be operator spaces. A sequence  $(f^n:X\to Y)_n$ is an  \emph{almost complete Lipschitz  embedding} if   there is $K>0$ so that each amplification   $f^n_n:\M_n(X)\to \M_n(Y) $  is a Lipschitz embedding   with distortion at most $K$. If $K=1$, $(f^n:X\to Y)_n$ is an \emph{almost complete isometric embedding}.\label{DefiAlmCompLipEmb}
\end{definition}

It was proved in \cite{BragaChavezDominguez2020PAMS} that, despite its nonlinear nature, the existence of  almost complete Lipschitz  embeddings    imposes restrictions on  the linear operator space  structures of the spaces; hence it satisfies \eqref{Item1} above. For instance, if an infinite dimensional   operator space $X$ almost
completely Lipschitzly embeds into G. Pisier's operator space $\mathrm{OH}$, then $X$ must be   completely  isomorphic
to $\mathrm{OH}$ (see \cite[Theorem 1.2]{BragaChavezDominguez2020PAMS}).  For another known example of how the existence of  almost complete Lipschitz  embeddings  imposes restrictions  to the linear operator space structures, see \cite[Proposition 4.4]{BragaChavezDominguez2020PAMS}.\footnote{ Theorem 1.2 and Proposition 4.4 of \cite{BragaChavezDominguez2020PAMS} are actually  stronger as  they only demand the embeddings to be  \emph{almost completely coarse embedings} (\cite[Definition 4.1]{BragaChavezDominguez2020PAMS}).}

Our main conceptual result shows that almost complete Lipschitz  embeddability  also satisfies  \eqref{Item2} above and therefore it yields a nontrivial theory of nonlinear  geometry of operator spaces; this answers \cite[Question 4.3]{BragaChavezDominguez2020PAMS}. Precisely, we show the following:

 \begin{theorem}\label{ThmAlmCompLipNotAlmLinearCompLip}
There are operator spaces $X$ and $Y$ so that $X$ almost completely isometrically embeds into $Y$ but so that $X$ does not $\R$-isomorphically embed into $Y$. In particular:
\begin{enumerate}
\item\label{ItemThm1} Almost complete isometric embeddability is strictly weaker than almost complete $\R$-linearly isometric  embeddability.
\item\label{ItemThm2} Almost complete Lipschitz embeddability is strictly weaker than almost complete $\R$-isomorphic embeddability.
\end{enumerate}
\end{theorem}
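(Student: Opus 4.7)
My plan is to first establish that $X$ must be non-separable in any such example, and then to construct a concrete non-separable pair $(X,Y)$ by leveraging the Lipschitz-free operator space theory developed later in the paper.

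\textbf{Reducing to the non-separable case.}
First I would show that $X$ cannot be separable. Given an almost complete isometric embedding $(f^n:X\to Y)_n$, translate to arrange $f^n(0)=0$; this preserves the isometry of $f^n_n$ since $f^n_n(M)-f^n_n(N)$ is unaffected. The $(1,1)$-corner embedding $X\hookrightarrow \M_n(X)$, $x\mapsto x\otimes e_{11}$, is an operator-space isometry whose image is mapped by $f^n_n$ onto the $(1,1)$-corner of $\M_n(Y)$, so
\[
\|f^n(x)-f^n(y)\|_Y \;=\; \|f^n_n(x\otimes e_{11})-f^n_n(y\otimes e_{11})\|_{\M_n(Y)} \;=\; \|x-y\|_X.
\]
Hence each $f^n:X\to Y$ is an isometric (metric) embedding. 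If $X$ were separable, the Godefroy--Kalton theorem would upgrade this metric isometric embedding to an $\R$-linear isometric embedding $X\hookrightarrow Y$, contradicting the conclusion. Consequently both $X$ and $Y$ must be non-separable, and this observation also pins down where the example has to live.

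\textbf{Construction via the Lipschitz-free operator space.}
For the positive half, I would take $Y=\fbs(X)$ (or the variant $\fbscb$) from the Lipschitz-free operator space construction developed in the paper, together with the canonical nonlinear embedding $\delta_X:X\to \fbs(X)$. By the universal property at each amplification level, the amplifications $(\delta_X)_n:\M_n(X)\to \M_n(\fbs(X))$ are isometric embeddings, so taking $f^n=\delta_X$ (or small variants thereof) yields an almost complete isometric embedding $X\hookrightarrow Y$. To rule out $\R$-isomorphic embedding of $X$ into $Y$, one needs $X$ non-separable and chosen so that the canonical barycenter quotient $\fbs(X)\twoheadrightarrow X$ admits no $\R$-linear bounded section; equivalently, $X$ fails to $\R$-linearly embed back into $\fbs(X)$. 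The abstract's operator-space Godefroy--Kalton theorem forbids this in the separable regime, so the failure must be witnessed non-separably.

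\textbf{Main obstacle.}
The crux is exhibiting a non-separable operator space $X$ for which the $\R$-linear section fails. I would attempt to port a classical non-separable Banach-space counterexample to the linear Lipschitz-lifting property (for example some $\ell_1(\Gamma)$ with uncountable $\Gamma$, or a $C(K)$ over a non-metrizable compact) into the operator-space setting by equipping it with $\MIN$, $\MAX$, or an injective operator-space structure, and then tracking the interaction of this structure with the amplification scale. The technical difficulty is to secure both conditions at once: isometric amplifications persist at every level $n$, yet no single $\R$-linear isomorphic embedding of $X$ into $\fbs(X)$ exists. This balancing act---requiring a density/cardinality or structural obstruction to linearity that coexists with matrix-level metric isometry---is where I expect the heart of the proof to lie.
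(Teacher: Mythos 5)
Your overall strategy --- produce the nonlinear embedding via the Lipschitz-free operator space machinery and force $X$ to be nonseparable --- matches the paper's, and your preliminary observation that $X$ cannot be separable is correct (it is essentially Theorem \ref{ThmCompIsomEmbImpliesCompLinIsomEmb}). However, the proposal has two genuine gaps. First, there is no single space $\fbs(X)$ for which \emph{all} amplifications of the canonical map $\delta_X$ are isometric: Proposition \ref{PropDeltaCompIsomIntoFreLipSp} only makes $\delta_X^n:X\to\cF^n(X)$ an $n$-isometry, so the target must be $Y=\big(\bigoplus_{n}\cF^n(X)\big)_{c_0}$ with the \emph{sequence} of maps $f^n=\delta_X^n$ (Corollary \ref{CorACLEmbSumLipFree}); this also means that ruling out an $\R$-linear embedding of $X$ into $Y$ is not the same as ruling out one into a single free space. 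Second --- and this is the heart of the matter, which you explicitly leave open --- you never exhibit the nonseparable $X$ that witnesses the failure. Your proposed reduction, ``the barycenter quotient admits no bounded $\R$-linear section, equivalently $X$ fails to $\R$-linearly embed into $\fbs(X)$,'' is not an equivalence: the non-embedding statement is far stronger, since a linear embedding need not be a section of $\beta_X$. And your candidate spaces $\ell_1(\Gamma)$ and $C(K)$ do not come equipped with the required non-embedding theorem.

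The paper resolves this by taking $X=H$ to be a nonseparable Hilbertian operator space. The key external input is Weaver's theorem that a nonseparable Hilbert space does not isomorphically embed into $\cF(H)$. To pass from the $c_0$-sum back to a single free space, one runs a sliding-hump argument to find a finite-codimensional (hence isomorphic) subspace $H'\subset H$ that embeds into a finite sum $\bigoplus_{n=1}^k\cF^n(H)$, replaces each $\cF^n(H)$ by $\cF(H)$ via Proposition \ref{PropIsoLipFreeAndNLipFree}, and then invokes Kaufmann's theorem $\cF(H)\simeq\big(\bigoplus_{n=1}^\infty\cF(H)\big)_{\ell_1}$ to absorb the finite sum into one copy of $\cF(H)$, contradicting Weaver's theorem. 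Without an analogue of that non-embedding result for your candidate spaces, your outline cannot be completed; with the Hilbertian choice, the ``balancing act'' you worry about disappears, since the matrix-level isometries come for free from Proposition \ref{PropDeltaCompIsomIntoFreLipSp} regardless of which operator space structure $H$ carries.
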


The Banach space $X$ constructed in Theorem \ref{ThmAlmCompLipNotAlmLinearCompLip} is nonseparable and this is actually necessary for its first statement to hold. Precisely, for separable operator spaces, we show the following:

 \begin{theorem}\label{ThmCompIsomEmbImpliesCompLinIsomEmb}
Let $X$ be a separable operator space  and assume that $X$ almost completely isometrically embeds into an operator space $Y$. Then, $X$ almost completely $\R$-isometrically embeds into $Y$.
\end{theorem}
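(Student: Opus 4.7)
The plan is to adapt Godefroy--Kalton's method to the operator space category via the operator space Lipschitz-free space $\mathscr{F}(X)$ and an operator-space isometric Lipschitz-lifting property for separable operator spaces, both developed elsewhere in this paper (as announced in the abstract).

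First, normalize by replacing each $f^n$ with $f^n - f^n(0)$, so that $f^n(0) = 0$ for all $n$; metric differences of the form $f^n_n([x_{ij}]) - f^n_n([y_{ij}])$ are unchanged by such a translation, so each amplification remains an isometric embedding. Next, apply the universal property of $\mathscr{F}(X)$ to extend each $f^n$ to an $\R$-linear map $\bar{f}^n \colon \mathscr{F}(X) \to Y$ with $\bar{f}^n \circ \delta_X = f^n$, where $\delta_X \colon X \to \mathscr{F}(X)$ is the canonical Lipschitz embedding. The isometric behavior of $f^n_n$ then transfers to $(\bar{f}^n)_n$ being isometric on the image of $(\delta_X)_n$ in $\M_n(\mathscr{F}(X))$.

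The crux is to invoke the operator-space isometric Lipschitz-lifting property --- which is where separability of $X$ enters the argument: for each $n$, there is an $\R$-linear map $\beta_X^n \colon X \to \mathscr{F}(X)$ with $Q_X \circ \beta_X^n = \mathrm{id}_X$ (where $Q_X \colon \mathscr{F}(X) \to X$ is the canonical quotient) such that for every $\R$-linear contraction $T \colon \mathscr{F}(X) \to Z$ into an operator space $Z$ whose composition $T \circ \delta_X$ has isometric $n$-th amplification, the composition $T \circ \beta_X^n \colon X \to Z$ has $\R$-linear isometric $n$-th amplification. Applied to $T = \bar{f}^n$, this yields the required $\R$-linear maps $g^n := \bar{f}^n \circ \beta_X^n \colon X \to Y$ with each $g^n_n$ an isometric embedding of $\M_n(X)$ into $\M_n(Y)$, which is exactly the definition of almost complete $\R$-isometric embeddability.

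The principal obstacle is the operator-space isometric Lipschitz-lifting property itself. Godefroy and Kalton obtained the Banach-space analog via a weak$^*$-compactness argument in the bidual of the Lipschitz-free space combined with an invariant-mean / barycenter construction. Adapting this to the operator-space category while respecting matrix norms at every level is the heart of the matter and will presumably rely on the operator-space Gateaux differentiability theory also introduced in this paper.
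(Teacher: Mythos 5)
Your overall strategy is the paper's: linearize $f^n$ through the Lipschitz-free operator space and compose with an $\R$-linear lifting of the canonical quotient, so that the final map $g^n=\bar f^n\circ\beta_X^n$ is literally the map the paper produces. However, there is a genuine gap in the step you black-box as the ``operator-space isometric Lipschitz-lifting property.'' That property, as Godefroy--Kalton define it and as the paper proves it for separable operator spaces (Theorem \ref{ThmIsoLipLifProp}), provides only an $\R$-linear $n$-contractive section $\beta_X^n:X\to\cF^n_\R(X)$ of the canonical quotient $Q_X$. Combined with $\|(\bar f^n)_n\|_n=\Lip_n(f^n)=1$ (Corollary \ref{CorLinearizationLipMapsTargetOS}), this yields only that $g^n_n$ is a \emph{contraction}. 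The strengthened statement you attribute to the lifting property --- that $T\circ\beta_X^n$ has isometric $n$-th amplification whenever $T\circ\delta_X$ does --- is true, but it does not follow from the lifting property alone, nor from the Gateaux differentiability machinery: nothing in your outline explains why $g^n_n$ is bounded \emph{below}. Note that $\beta_X^n(x)$ does not lie in $\delta_X(X)$, so the isometric behavior of $(\bar f^n)_n$ on the (nonlinear) image of $(\delta_X^n)_n$ gives no control on $\|g^n_n([x_{ij}])\|$.

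The missing ingredient is the operator space version of Figiel's theorem (Proposition \ref{PropFigielComplete}): since $f^n$ is an $n$-isometry with $f^n(0)=0$, there is an $\R$-linear map $R:\overline{\Span}_\R f^n(X)\to X$ with $\|R_n\|_n=1$ and $R\circ f^n=\mathrm{Id}_X$. Granting this, the lower bound is immediate: $R\circ\bar f^n$ and $Q_X$ are bounded $\R$-linear maps on $\cF^n_\R(X)$ that agree on every $\delta_x$, hence are equal, so $R\circ g^n=Q_X\circ\beta_X^n=\mathrm{Id}_X$ and therefore $\|g^n_n([x_{ij}])\|\geq\|R_n(g^n_n([x_{ij}]))\|=\|[x_{ij}]\|$. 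This is exactly how the paper closes the argument, via Proposition \ref{PropLiftIsoLiftPropSections}. Two smaller points: you must work with the $n$-indexed free space $\cF^n_\R(X)$ rather than a single $\cF(X)$, since it is only for $\cF^n$ that $\delta_X^n$ is an $n$-isometry (Proposition \ref{PropDeltaCompIsomIntoFreLipSp}) and that the linearization satisfies $\|(\bar f^n)_n\|_n=\Lip_n(f^n)$ rather than merely $\Lip_1(f^n)$; and since all the maps involved are only $\R$-linear, the real version of the free space is the right setting.
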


 In particular,  Theorem \ref{ThmCompIsomEmbImpliesCompLinIsomEmb} provides another example of  almost complete  isometric embeddability satisfying \eqref{Item1} above.	We must point out that  Theorem \ref{ThmCompIsomEmbImpliesCompLinIsomEmb} is an operator space version of the result of   G. Godefroy and N.  Kalton which states that if a separable Banach space isometrically embeds into another Banach space, then it does so $\R$-linearly isometrically (see  \cite[Corollary 3.3]{GodefroyKalton2003}).

Using differentiability of Lipschitz maps, we obtain another example of almost complete Lipschitz  embeddability satisfying \eqref{Item1} above. Recall, if a separable Banach space $X$ Lipschitz embeds into a dual Banach space, then it does so $\R$-isomorphically (see \cite[Theorem 3.5]{HeinrichMankiewicz1982}). We prove the following  operator space version of this classic result:

 \begin{theorem}\label{ThmCompLipEmbDualSpImpliesCompEmbBOUND:COROLLARY}
 Let $X$ and $Y$ be operator spaces and assume that $X$ is separable. If $X$ almost completely Lipschitzly embeds into $Y^*$, then $X$ almost completely $\R$-linearly embeds into $Y^*$.\qed
 \end{theorem}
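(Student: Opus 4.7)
The strategy is to adapt the classical Heinrich--Mankiewicz theorem to the operator-space amplification setting. Let $(f^n \colon X \to Y^*)_n$ be an almost complete Lipschitz embedding with distortion at most $K$. After translating each $f^n$ by $-f^n(0)$ (which preserves the distortion of the amplifications), we may assume $f^n(0) = 0$; in particular, by considering matrices of the form $x \otimes e_{11} \in \M_n(X)$, each $f^n \colon X \to Y^*$ is itself a Lipschitz embedding with distortion at most $K$.

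First I would differentiate at a point: since $X$ is separable and $Y^*$ is a dual Banach space, the Heinrich--Mankiewicz theorem---or its operator-space counterpart developed earlier in the paper---furnishes, for each $n$, a point $x_n \in X$ at which $f^n$ is weak-$*$ Gateaux differentiable, and the resulting $\R$-linear derivative $g^n \colon X \to Y^*$ is itself an embedding with distortion at most $K$.

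The central observation is that $g^n_n$ coincides with the weak-$*$ Gateaux derivative of $f^n_n$ at the constant matrix $\mathbf{x}_n \in \M_n(X)$ having $x_n$ in every entry. Indeed, entrywise,
\[
\frac{f^n_n(\mathbf{x}_n + t [v_{ij}]) - f^n_n(\mathbf{x}_n)}{t} = \Bigl[\frac{f^n(x_n + t v_{ij}) - f^n(x_n)}{t}\Bigr],
\]
and each entry converges in the weak-$*$ sense to $g^n(v_{ij})$ as $t \to 0$. Since the whole difference quotient is norm-bounded by $\Lip(f^n_n)\|[v_{ij}]\|_{\M_n(X)}$, this entrywise weak-$*$ convergence lifts to weak-$*$ convergence in $\M_n(Y^*)$, viewed as the dual of the trace-class tensor product $S_1^n \widehat{\otimes} Y$. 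Hence $g^n_n$ is the weak-$*$ Gateaux derivative of $f^n_n$ at $\mathbf{x}_n$, and applying the Heinrich--Mankiewicz distortion-preservation argument to $f^n_n$ (which has distortion at most $K$) yields that $g^n_n$ is an $\R$-linear embedding with distortion at most $K$. Consequently $(g^n)_n$ is an almost complete $\R$-linear embedding of $X$ into $Y^*$ with distortion at most $K$.

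The main technical hurdle is the preservation of the \emph{lower} distortion bound under weak-$*$ Gateaux differentiation. The upper bound $\|g^n_n(\cdot)\| \leq K\|\cdot\|$ is immediate from the weak-$*$ lower semicontinuity of the norm, but the matching lower bound is subtle because weak-$*$ limits can strictly decrease the norm. Classically one selects, for each difference quotient, a norming functional in the predual and passes to a suitable cluster point, balancing the predual convergence against the weak-$*$ convergence of the derivative; in our setting this has to be carried out with the predual $S_1^n \widehat{\otimes} Y$ of $\M_n(Y^*)$, but this should follow the same pattern as in the operator-space Gateaux differentiability machinery established earlier in the paper.
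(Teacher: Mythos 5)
Your reduction to differentiating each $f^n$ at a single point, and the observation that the weak-$*$ Gateaux derivative of $f^n_n$ at the constant matrix $\mathbf{x}_n$ is the amplification $g^n_n$ of the derivative of $f^n$ at $x_n$, are both correct and are indeed how the paper sets things up. But the decisive step --- ``applying the Heinrich--Mankiewicz distortion-preservation argument to $f^n_n$ yields that $g^n_n$ has distortion at most $K$'' --- does not go through as stated. The Heinrich--Mankiewicz argument (their Lemma 3.1) only shows that the derivative satisfies the lower bound $\|D^*F_{\mathbf z}(\mathbf v)\|\geq \Lip((F)^{-1})^{-1}\|\mathbf v\|$ at \emph{Haar-almost every} point $\mathbf z$ of the domain; it says nothing about any individual point of differentiability, and weak-$*$ limits genuinely can lose norm at exceptional points. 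Your point $\mathbf{x}_n$ is forced to be a constant matrix, and for $n\geq 2$ the constant matrices form a proper closed subspace of $\M_n(X)$, hence a Haar null set. So a Haar-full ``good set'' for $f^n_n$ in $\M_n(X)$ need not contain any constant matrix, and you cannot conclude anything about $g^n_n$ this way. This is exactly the difficulty the paper isolates: its Proposition~\ref{PropD*FInvBound} reworks the Heinrich--Mankiewicz lemma so that the ``almost every'' quantifier runs over points $x\in X$ while the test directions run over matrices $[a_{ij}]\in\partial B_{\M_n(X)}$, perturbing the constant matrix $[x]$ along $[a_{ij}]$ and integrating $\varphi(t)=\langle f_n([x+ta_{ij}])([b_{pq}])\bar\xi,\bar\zeta\rangle$ along such lines. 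That argument in turn needs a Lusin-type continuity step for $x\mapsto D^*f_x(a_{ij})$, which requires showing that the norm-Borel and weak-$*$-Borel structures agree on separable subspaces of $Y^*$ (Proposition~\ref{PropBorelSigmaAlgDual}), plus a simultaneous Lebesgue-point argument for all $n^2$ entry directions. None of this is supplied by ``select a norming functional in the predual and pass to a cluster point.''

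A second, smaller gap: the weak-$*$ Gateaux differentiability result of Heinrich--Mankiewicz requires the target to be the dual of a \emph{separable} space, so for general $Y$ you cannot invoke it directly. The paper handles this with a separate separable-reduction step (Propositions~\ref{PropMapUltraOfDualDualOfUltra} and~\ref{PropDownLoewenheinSkolenOpSp}, via the Downward L\"owenheim--Skolem theorem and the Keisler--Shelah theorem), choosing a separable $Z\subset Y$ that completely norms $\overline{\spa}\{\mathrm{Im}(f)\}$ and then transporting the linear embedding from $Z^*$ back into $Y^*$ by a complete isometry. Your proposal omits this entirely.
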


As it is often the case, although Theorem \ref{ThmCompLipEmbDualSpImpliesCompEmbBOUND:COROLLARY} is an operator space version of  \cite[Theorem 3.5]{HeinrichMankiewicz1982}, its proof required  nontrivial adaptations in order for it to hold in  the operator space scenario. Note that by \cite[Theorem 2.9]{Blecher1992} every von Neumann algebra is a dual operator space.
Similarly to the Banach space category, we have that an operator space $Y$ completely linearly isometrically embeds in $Y^{**}$ by \cite[Theorem 2.11]{BlecherPaulsen1991} or \cite[Theorem 2.2]{EffrosRuan1991}; hence, if $X$ almost  completely Lipschitzly embeds in $Y$ it almost completely $\R$-linearly embeds in $Y^{**}$. However, in contrast to the category of Banach spaces, not every operator space is locally reflexive \cite[Chapter 18]{Pisier-OS-book}, so no general Ribe-type theorem, like \cite[Theorem 5.1]{HeinrichMankiewicz1982}, can be deduced from this.

As mentioned above,   Theorem \ref{ThmCompLipEmbDualSpImpliesCompEmbBOUND:COROLLARY} cannot be strengthened in order to obtain   $\C$-linear maps. However, substituting  $Y^*$ by $Y^*\oplus \overline{Y}^*$ --- where $\overline{Y}$ denotes  the conjugate operator space of $Y$ (see Subsection \ref{SubsectionConjugate}) --- we have the following:

 \begin{corollary}\label{CorLipThm}
 Let $X$ and $Y$ be operator spaces and assume that $X$ almost completely Lipschitzly embeds into $Y^*$. Then $X$ almost completely $\C$-linearly embeds into $Y^*\oplus \overline{Y}^*$.\qed
 \end{corollary}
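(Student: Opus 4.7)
The plan is to combine Theorem \ref{ThmCompLipEmbDualSpImpliesCompEmbBOUND:COROLLARY} with the standard real-to-complex decomposition, which writes any $\R$-linear map between complex Banach spaces as the sum of a $\C$-linear map and a conjugate-$\C$-linear map. First, applying Theorem \ref{ThmCompLipEmbDualSpImpliesCompEmbBOUND:COROLLARY} yields an almost complete $\R$-linear embedding $(f^n : X \to Y^*)_n$ of some distortion $K > 0$. For each $n$, I would then decompose
\[ f^n(x) = g^n_1(x) + g^n_2(x), \qquad g^n_1(x) := \tfrac{1}{2}\bigl(f^n(x) - i f^n(ix)\bigr), \qquad g^n_2(x) := \tfrac{1}{2}\bigl(f^n(x) + i f^n(ix)\bigr), \]
so that $g^n_1$ is $\C$-linear and $g^n_2$ is conjugate-$\C$-linear. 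Via the natural completely isometric identification $\overline{Y^*} \cong \overline{Y}^*$ recorded in Subsection \ref{SubsectionConjugate}, the map $g^n_2$ corresponds to a $\C$-linear map $\tilde g^n_2 : X \to \overline{Y}^*$.

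Next, I would define $h^n : X \to Y^* \oplus \overline{Y}^*$ by $h^n(x) := (g^n_1(x), \tilde g^n_2(x))$. This is visibly $\C$-linear, so the task reduces to showing that the amplifications $h^n_n$ are embeddings of uniformly bounded distortion. Under the identification $\M_n(Y^* \oplus \overline{Y}^*) = \M_n(Y^*) \oplus \M_n(\overline{Y}^*)$ with the maximum norm, the upper bound comes from
\[ \|h^n_n([x_{ij}])\| = \max\bigl(\|(g^n_1)_n([x_{ij}])\|, \|(\tilde g^n_2)_n([x_{ij}])\|\bigr) \leq \tfrac{1}{2}\bigl(\|f^n_n([x_{ij}])\| + \|f^n_n([ix_{ij}])\|\bigr) \leq K \|[x_{ij}]\|, \]
using that scalar multiplication by $i$ is an isometry on $\M_n(X)$. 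For the lower bound, the identity $f^n = g^n_1 + g^n_2$ read as maps into $Y^*$ gives $\|f^n_n([x_{ij}])\| \leq 2\|h^n_n([x_{ij}])\|$ at every matrix level, so combining with the lower Lipschitz bound on $f^n_n$ yields $\|h^n_n([x_{ij}])\| \geq \tfrac{1}{2}\|[x_{ij}]\|$. Thus $(h^n)_n$ is an almost complete $\C$-linear embedding of distortion at most $2K$.

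The main bookkeeping obstacle is verifying that passing $g^n_2$ through the canonical isomorphism $\overline{Y^*} \cong \overline{Y}^*$ preserves all matrix norms, so that the operator-space structure of $\tilde g^n_2$ as a map into $\M_n(\overline{Y}^*)$ matches the expected one, and in particular so that $(g^n_2)_n$ and $(\tilde g^n_2)_n$ have the same norm on every matrix level. This is a standard functorial property of the operator-space conjugate, essentially built into the definitions reviewed in Subsection \ref{SubsectionConjugate}; granted this, the argument is simply the amplification-wise version of the classical Banach-space complexification trick.
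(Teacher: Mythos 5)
Your proposal is correct and follows essentially the same route as the paper: the paper deduces the corollary from Theorem \ref{ThmCompLipEmbDualSpImpliesCompEmbBOUND:COROLLARY} together with Proposition \ref{PropYoplusYBar}, and your argument simply carries out the (omitted) proof of that proposition, namely the amplification-wise complexification decomposition $f^n = g^n_1 + g^n_2$ into $\C$-linear and conjugate-linear parts, with the same distortion bookkeeping. The only caveat, inherited from the paper's own statement, is that the separability of $X$ required by Theorem \ref{ThmCompLipEmbDualSpImpliesCompEmbBOUND:COROLLARY} is being used implicitly in both arguments.
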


We now give a brief description of    the  methods used in order to obtain our main results.    Recall, given a metric space $(X,d)$, the \emph{Lipschitz-free Banach space of $X$}, denoted by $\cF(X)$, is arguably one of most important  linearization tools (we refer the reader to Section \ref{SectionLipFreeSp} for details). Precisely, $\cF(X)$ is a Banach space so that (1) there is a canonical isometric embedding $\delta_X:X\to \cF(X)$, and (2) given any Lipschitz map $L:X\to Y$ between metric spaces, there is a unique  linear map $\tilde L:\cF(X)\to \cF(Y)$ so that $\|\tilde L\|=\Lip(L)$ and $\tilde L\circ\delta_X=\delta_Y\circ L$ (i.e., $\tilde L$ lifts $L$). 

In Section \ref{SectionLipFreeSp}, we introduce an operator space version of $\cF(X)$. Precisely, for each operator metric space\footnote{As seen in Subsection \ref{SubsectionBasics}, an \emph{operator metric space} $X$  is defined as a subset of $\cB(H)$ for some Hilbert space $H$.} and each $n\in\N$, we define an operator space $\cF^n(X)$ ---   the \emph{$n$-Lipschitz-free operator space of $X$} --- and show that (1) there is a canonical embedding $\delta_X^n:X\to \cF^n(X)$ whose $n$-th amplification is an isometry (see Proposition  \ref{PropDeltaCompIsomIntoFreLipSp}), and (2) given any Lipschitz map $L:X\to Y$ there is a unique linear map $\tilde L:\cF^n(X)\to \cF^n(Y)$ whose completely bounded norm equals the Lipschitz constant of the $n$-th amplification of $L$ and  $\tilde L\circ\delta_X=\delta_Y\circ L$ (see Lemma \ref{LemmaLinearizationLipMaps}). Hence, $\cF^n(X)$ should be seen as the noncommutative version of   $\cF(X)$. Several other properties of $\cF^n(X)$ are proven in Section \ref{SectionLipFreeSp}; for instance, we show that   $\cF^n(X)$ is an $n$-maximal operator space (Remark \ref{RemNLipFreeIsNMaximal}).

In Section \ref{SectionLiftProp}, we introduce an operator space version of the \emph{isometric Lipschitz-lifting property} of   G. Godefroy and N. Kalton  (Definition \ref{DefiLipLiftProp}) and show that every separable operator   space satisfies this property (Theorem \ref{ThmIsoLipLifProp}). Together with an operator space version of an old result of T. Figiel (Proposition \ref{PropFigielComplete}), this is  the main tool in order to obtain Theorem \ref{ThmCompIsomEmbImpliesCompLinIsomEmb}.

We present in   Section \ref{SectionAlternative}  a second approach to Lipschitz-free operator spaces as an alternative to the one presented in Section \ref{SectionLipFreeSp}. In a nutshell, the operator space norms of $\cF^n(X)$ described in Section \ref{SectionLipFreeSp} are given in terms of a \emph{supremum} and in Section \ref{SectionAlternative} we present an alternative description of those norms in terms of an \emph{infimum} (the treatment in this section   follows the presentation of Lipschitz-free spaces given in \cite{Arens-Eells,Weaver1999BookSecondEdition} while our approach for Section \ref{SectionLipFreeSp} follows \cite{GodefroyKalton2003}). Moreover, we use this approach in Subsection \ref{SubsectionExamples} in order to compute $\cF^n(X)$ for some simple operator metric spaces.

At last, Section \ref{SectionDiff} deals with differentiability of Lipschitz maps in the operator space category. Precisely, this section adapts several results of \cite{HeinrichMankiewicz1982} about  Gateaux $w^*$-$\R$-differentiability to the operator spaces setting. The tools in this section allow us to obtain Theorem \ref{ThmCompLipEmbDualSpImpliesCompEmbBOUND:COROLLARY} (cf. \cite[Theorem 3.5]{HeinrichMankiewicz1982}).

\section{Preliminaries}\label{SectionPrelim}

Throughout this paper, $\K$ denotes either $\R$ or $\C$. All Banach spaces are assumed to be over the complex field and all linear maps are assumed to be $\C$-linear unless otherwise stated. In this case, we refer to those as $\R$-Banach spaces, $\R$-operator spaces, $\R$-linear maps, etc. However, we point out that all of our main results are valid for $\R$-operator spaces as well (see \cite{Ruan-real-OS} for a detailed treatment of $\R$-operator spaces). 

For each $n\in\N$, we let $\M_n$ denote the space of $n$-by-$n$ matrices with complex entries and $\|\cdot\|_{\M_n}$ denotes the canonical operator norm on $\M_n$.

\subsection{Basics on operator metric spaces}\label{SubsectionBasics}
Given a  Hilbert space $H$ and $n\in\N$, we denote the space of bounded operators on $H$ endowed with its operator norm by $\cB(H)$ and the $\ell_2$-sum of $n$ copies of $H$ by $H^{\oplus n}$. A subset $X\subset \cB(H)$ is called an \emph{operator metric space}. If $X$ is moreover a closed linear subspace of $\cB(H)$, then $X$ is an \emph{operator space}.

Given an operator metric space $X\subset \cB(H)$ and   $n\in \N$, the matrix space $\M_n(X)$ has a canonical   norm  given by its canonical realization as a   subspace of  $\cB(H^{\oplus n})$.  Elements of $\M_n(X)$ are denoted by $[x_{ij}]$ --- it is implicit that $i,j$ varies among $\{1,\ldots, n\}$.  

Let $f: X\to Y$ be a map between operator metric spaces. For each $n\in\N$, the \emph{$n$-th amplification}  $f_n: \M_n(X)\to \M_n(Y)$ is  defined by letting \[f_n([x_{ij}]) := [f(x_{ij})]\]
for all $[x_{ij}]\in \M_n(X)$.  We say that $f$ is an  \emph{$n$-isometry} if $f_n$ is an isometry and a \emph{complete isometry} if each $f_n$ is an $n$-isometry. If $X$ and $Y$ are operator spaces and $f:X\to Y$ is $\K$-linear, then each $f_n$ is also $\K$-linear and its norm is denoted by $\|f_n\|_n$. In this case, we say that $f$ is an \emph{$n$-contraction} if $f_n$ is a contraction (i.e., if $\|f_n\|_n\leq 1$) and a \emph{complete contraction} if each $f_n$ is an $n$-contraction. Moreover, $f$ is \emph{completely bounded} (abbreviated by \emph{cb}) if 
\[\|f\|_{\cb}= \sup_n \|f_n\|_n<\infty\]
 and $f$ is a \emph{complete $\K$-isomorphic embedding} if both $f$ and $f^{-1}$ are completely bounded.

The space of all linear cb-maps  $X\to Y$ between operator spaces is denoted by  $\CB(X,Y)$ and $\|\cdot\|_{\cb}$ defines a complete norm on $\CB(X,Y)$. The norm $\|\cdot\|_{\mathrm cb}$ is called the \emph{cb-norm}. Moreover,  $\CB(X,Y)$  carries a natural operator space structure itself. Precisely, given $n\in\N$, the matrix norm on $\M_n(\CB(X,Y))$ is given by the canonical isomorphism  $\M_n(\CB(X,Y))\cong\CB(X,\M_n(Y))$.  In particular, the Banach space dual of an operator space $X$ is an operator space via the norms inherited from the identifications $\M_n(X^*)\cong \CB(X,\M_n)$.

It is well known that for a given Banach space $X$, among all the  possible operator space structures which are compatible with the norm of $X$ there are a smallest one and a largest one, denoted $\MIN(X)$ and $\MAX(X)$ respectively (see \cite[Chap. 3]{Pisier-OS-book}).
More generally, following \cite{OikhbergRicard2004MathAnn} given an operator space $X$ and a natural number $n$ we will consider the smallest and largest operator space structures that are compatible with the norm on $\M_n(X)$, which will be denoted by $\MIN_n(X)$ and $\MAX_n(X)$ respectively. Explicit descriptions can be found in \cite[Definition I.3.2]{LehnerPhDThesis} or \cite[Section 2]{OikhbergRicard2004MathAnn}, though we essentially will not need them, so, for our purposes, the following property can be taken as their definition \cite[Lemma 2.3]{OikhbergRicard2004MathAnn}: given an operator space $X$, $\MIN_n(X)$ (resp. $\MAX_n(X)$) is the unique operator structure on $X$ which agrees with that of $X$ up to the $n$-th matricial level, and such that for any operator space $Y$ and any linear map $u : Y \to X$ (resp. $v : X \to Y$) we have
$\|u : Y \to \MIN_n(X)\|_{\cb} = \| u_n : \M_n(Y) \to \M_n(X) \|_n$
(resp. $\|v : \MAX_n(X) \to Y\|_{\cb} = \| v_n : \M_n(X) \to \M_n(Y) \|_n$).

\subsection{Almost complete isomorphic embeddings}\label{SubsectionAlmCompIsoEmb} The next definition is a weakening of complete isomorphic/isometric embeddings and it was introduced in \cite[Definition 4.1]{BragaChavezDominguez2020PAMS}. Moreover, as shown in \cite[Theorem 4.2]{BragaChavezDominguez2020PAMS}, this is a strict weakening. 
 
\begin{definition}[Definition \ref{DefiAlmCompIsoEmb}]
Let $X$ and $Y$ be operator metric space and consider a sequence of $\K$-linear maps $(f^n:X\to Y)_n$.
\begin{enumerate}
\item The sequence $(f^n)_n$ is an \emph{almost    complete  $\K$-linear isometric embedding of $X$ into $Y$} if the $n$-th amplification of each $f^n$ is an isometry. In this case,   \emph{$X$ almost   completely $\K$-linear isometrically embeds into $Y$}.
\item The sequence $(f^n)_n$ is an \emph{almost complete $\K$-isomorphic embedding of $X$ into $Y$} if there is $D>0$ so that the  $n$-th amplification of each $f^n$ is a $D$-isomorphism.   In this case,   \emph{$X$ almost   completely   $\K$-isomorphically embeds into $Y$}.
\end{enumerate}
If $\K$ is not specified, it is always assumed to be $\C$.\label{DefiAlmostCompLinearLipEmb}
\end{definition}

Unlike the category of separable Banach spaces, there is no linearly isometrically universal element in the category of separable operator spaces, i.e., there is no separable operator space $X$ so that all separable operator spaces can be linearly isometrically embedded into $X$ (see \cite[Section 2.12]{Pisier-OS-book}).  As we show in the next proposition, this is no longer the case for almost   complete linear isometric embeddings. For that, let $\Delta$ denote the Cantor set  $\{0,1\}^\N$.   Given $n\in\N$, $C(\Delta,\M_n)$ denotes the Banach space of all continuous functions $\Delta\to \M_n$ endowed with the supremum norm. We view $C(\Delta,\M_n)$ with the canonical  operator space structure determined by the fact that it is a $C^*$-algebra.

\begin{proposition}\label{UniversalityForLinearnIsometricEmbeddings}
Let $X$ be a separable operator space and $n\in\N$. Then there is an $n$-isometry $X\to C(\Delta,\M_n)$.  In particular, the separable operator space $\big(\bigoplus_{n\in\N} C(\Delta,\M_n) \big)_{c_0}$ is universal for almost complete linear isometric embeddings of separable operator spaces.
\end{proposition}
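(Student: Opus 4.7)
The plan is to realize $X$ inside $C(K,\M_n)$ for an appropriate compact metric space $K$ of completely contractive maps $X\to\M_n$, and then to embed $C(K,\M_n)$ into $C(\Delta,\M_n)$ via pullback along a continuous surjection $\Delta\twoheadrightarrow K$. The workhorse is the formula
\[
\|u\|_{\M_n(X)}=\sup\bigl\{\|\phi_n(u)\|_{\M_{n^2}}\st \phi\in\CB(X,\M_n),\ \|\phi\|_{\cb}\le 1\bigr\}
\qquad\forall u\in\M_n(X).
\]
The inequality ``$\ge$'' is immediate from $\|\phi_n\|\le\|\phi\|_{\cb}$. For ``$\le$'', I would realize $X\subseteq\cB(H)$ completely isometrically (enlarging $H$ so that $\dim H\ge n$), pick unit vectors $\xi=(\xi_j),\eta=(\eta_i)\in H^{\oplus n}$ with $|\langle u\xi,\eta\rangle|$ close to $\|u\|_{\M_n(X)}$, enclose all components in $n$-dimensional subspaces $V,W\subseteq H$, and observe that the compression $\phi(x):=U_W^*P_Wx|_VU_V:X\to\M_n$, for fixed unitaries $U_V:\C^n\to V$ and $U_W:\C^n\to W$, is completely contractive and satisfies $\langle\phi_n(u)\xi',\eta'\rangle=\langle u\xi,\eta\rangle$ with $\xi'_j:=U_V^*\xi_j$ and $\eta'_i:=U_W^*\eta_i$.

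With the formula in hand, let $K:=\{\phi\in\CB(X,\M_n)\st\|\phi\|_{\cb}\le 1\}$, equipped with the topology of pointwise convergence (which agrees with point-weak convergence since $\M_n$ is finite-dimensional). Tychonoff's theorem implies $K$ is compact (view $K\subseteq\prod_{x\in X}\overline{B}_{\M_n}(0,\|x\|)$), and separability of $X$ ensures $K$ is metrizable because each $\phi\in K$ is continuous and hence determined by its values on a countable dense subset of $X$. Define $F:X\to C(K,\M_n)$ by $F(x)(\phi):=\phi(x)$. Each $F(x)$ is continuous by construction and $F$ is linear; its $n$-th amplification $F_n:\M_n(X)\to \M_n(C(K,\M_n))=C(K,\M_{n^2})$ sends $u$ to $\phi\mapsto\phi_n(u)$, so by the displayed formula
\[
\|F_n(u)\|_{C(K,\M_{n^2})}=\sup_{\phi\in K}\|\phi_n(u)\|_{\M_{n^2}}=\|u\|_{\M_n(X)},
\]
and $F$ is an $n$-isometry. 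Fixing a continuous surjection $\pi:\Delta\to K$ (a classical fact for any nonempty compact metric space), the pullback $\pi^*:C(K,\M_n)\to C(\Delta,\M_n)$, $g\mapsto g\circ\pi$, is a complete isometry since surjectivity of $\pi$ preserves sup norms at every matricial level; hence $\pi^*\circ F:X\to C(\Delta,\M_n)$ is the required $n$-isometry.

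For the universality statement, I would compose each $n$-isometry $f^n:X\to C(\Delta,\M_n)$ just constructed with the completely isometric $C^*$-algebra inclusion $C(\Delta,\M_n)\hookrightarrow \bigl(\bigoplus_{m\in\N} C(\Delta,\M_m)\bigr)_{c_0}$ into the $n$-th coordinate, yielding an almost complete linear isometric embedding $(f^n)_n$ of $X$ into the claimed universal separable operator space. The main subtlety is the key formula itself, namely producing via compression enough completely contractive maps specifically into $\M_n$ (rather than into $\M_m$ for variable $m$) to detect the $n$-th matricial norm; the identification $\cB(V,W)\cong\M_n$ and related dimension bookkeeping when the ``natural'' subspaces fall short of dimension $n$ are the only steps requiring care.
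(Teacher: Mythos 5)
Your proposal is correct and follows essentially the same route as the paper: realize $X$ $n$-isometrically in $C(K,\M_n)$ with $K$ the cb-unit ball of $\CB(X,\M_n)\cong\M_n(X^*)$ (compact metrizable by separability), pull back along a continuous surjection $\Delta\twoheadrightarrow K$, and assemble the coordinates into the $c_0$-sum. The only difference is that the paper simply cites Lehner's thesis for the $n$-isometric embedding $X\to C(K,\M_n)$, whereas you prove that ingredient from scratch via the compression argument establishing $\|u\|_{\M_n(X)}=\sup\{\|\phi_n(u)\|\st\phi\in\CB(X,\M_n),\ \|\phi\|_{\cb}\le 1\}$, which is a correct and self-contained substitute.
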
 

\begin{proof}
By \cite[Theorem I.1.9]{LehnerPhDThesis}, there is a linear $n$-isometric embedding $f^n: X \to C(K_n,\M_n)$ where $K_n$ is the unit ball of $\M_n(X^*)$ endowed with the weak$^*$ topology.
Since $X$ is separable, $K$ is metrizable. So,   by the Alexandroff--Urysohn theorem, there is a continuous surjection $q_n : \Delta \to K_n$.
Finally, observe that the map $C(K_n,\M_n) \to C(\Delta, \M_n)$ given by $f^n \mapsto f^n \circ q_n$ is a complete isometry.

The maps $(f^n\circ q_n)_{n\in \mathbb N}$ clearly induce an almost complete linear isometric embedding of $X$ into $\big(\bigoplus_{n\in\N} C(\Delta,\M_n) \big)_{c_0}$. \qedhere

\end{proof}

\subsection{Almost complete Lipschitz embeddings}\label{SubsectionAlmCompLipEmb}
As mentioned in the introduction, although the natural nonlinear versions of complete isomorphic embeddings do not lead to an interesting nonlinear theory for operator spaces (Theorem \ref{ThmCompleteCoarseIsRLinear}), we will show that the natural nonlinearizations of almost complete isomorphic embeddings do (Theorem \ref{ThmAlmCompLipNotAlmLinearCompLip}). We point out that this nonlinearization was first introduced in \cite[Definition 4.1]{BragaChavezDominguez2020PAMS} for the coarse category.

Given metric spaces $(X,d)$ and $(Y,\partial)$, and a map $f:X\to Y$,  we denote the  \emph{modulus of uniform continuity of $f$} by $\omega_f:[0,\infty)\to [0,\infty]$,  i.e., 
 \[\omega_f(t)=\sup\Big\{\partial(f(x),f(y))\mid d(x,y)\leq t\Big\}\]
 for $t\geq 0$. The \emph{Lipschitz constant of $f$} is given by $\Lip(f)=\sup_{t>0}\omega_f(t)/t$ and $f$ is called \emph{Lipschitz} if $\Lip(f)<\infty$. Moreover, if $f$ is an injective  Lipschitz map and $f^{-1}$ is also Lipschitz, then $f$ is a \emph{Lipschitz embedding}. The infimum of all $D>0$ so that there is $r>0$ such that 
 \[r\cdot d(x,y)\leq \partial(f(x),f(y))\leq Dr\cdot d(x,y)\]
 for all $x,y\in X$ is called the \emph{Lipschitz distortion of $f$} and denoted by $\mathrm{Dist}(f)$.
 
 \begin{definition}
 Let   $X$ and $Y$ be operator metric spaces and let $n\in\N$. 
 \begin{enumerate}
     \item The \emph{$n$-th Lipschitz constant of $f$} is  given by  $\Lip_n(f)=\sup_{t>0}\omega_{f_n}(t)/t$. Equivalently,  $\Lip_n(f)= \Lip(f_n)$.
     \item If $f$ is a Lipschitz embedding, the \emph{$n$-th Lipschitz distortion} of $f$ is defined as $\mathrm{Dist}_n(f)=\mathrm{Dist}(f_n)$. 
 \end{enumerate}
 \end{definition}
 
 Notice that, if $f$ is Lipschitz, then $f_n$ is   Lipschitz for all $n\in\N$. However, if $f$ is not $\R$-affine,\footnote{A map $f:X\to Y$ between $\K$-vectors spaces is called $\K$-affine if $g=f-f(0)$ is $\K$-linear.} then $\sup_{n\in\N}\Lip_n(f)=\infty$ by Theorem \ref{ThmCompleteCoarseIsRLinear}.  In order to overcome this issue, we look at \emph{almost complete Lipschitz embeddings} (cf. Definition \ref{DefiAlmostCompLinearLipEmb}):

\begin{definition}[Definition \ref{Defi2}] 
Let $X$ and $Y$ be operator metric spaces and consider a sequence of maps    $(f^n:X\to Y)_n$.
\begin{enumerate}
\item The sequence $(f^n)_n$ is an \emph{almost complete isometric  embedding of $X$ into $Y$} if each $n$-th amplification of $f^{n}$ is an isometry.  In this case, we say that \emph{$X$ almost completely isometrically  embeds into $Y$}.
\item The sequence $(f^n)_n$ is an \emph{almost complete Lipschitz embedding of $X$ into $Y$} if there is $D>0$ so that $\mathrm{Dist}_n(f^n)\leq D$ for all $n\in\N$. 
 In this case, we say that \emph{$X$ almost completely Lipschitzly embeds into $Y$}.
\end{enumerate} \label{DefiAlmostCompLipEmb}
\end{definition} 
 
 \subsection{Obtaining  $\C$-linear maps from $\R$-linear maps}\label{SubsectionConjugate}
 As mentioned in the introduction, it is not always possible to recover $\C$-linear maps from $\R$-linear maps; not even in the Banach space category (see \cite{Bourgain1986PAMS,Kalton1995CanBull,Ferenczi2007Advances}). However, $\R$-linear embeddability  still sheds some  light on $\C$-linear embeddability; at least  if one is  allowed to change   the target space ``slightly''. Before stating the precise result, we recall the concept of conjugate operator space. 
 
 Given a $\C$-Banach space $X$, we denote   the \emph{conjugate    of $X$} by $\overline{X}$, i.e., $\overline {X}=X$ as a set and the scalar multiplication on $\overline{X}$ is given by $\alpha x=\bar \alpha x$ for all $\alpha \in \C$ and all $x\in \overline{X}$. Then, given a $\C$-operator space $Y\subset B(H)$, $\overline{Y}$ denotes the \emph{conjugate operator space of $Y$}, i.e.,  $\overline{Y}=Y$ and the operator space structure on $Y$ is given by the canonical inclusion $\overline{Y}\subset \overline{B(H)}= B(\overline{H})$.

 The following simple proposition can be obtained just as \cite[Proposition 4.5]{BragaChavezDominguez2020PAMS}, so we omit the details.

\begin{proposition}\label{PropYoplusYBar}
If a $\C$-operator space $X$ almost completely $\R$-isomorphically embeds into a $\C$-operator space $Y$, then $ X$ almost completely $\C$-isomorphically embeds into $Y \oplus \overline{Y}$.
\end{proposition}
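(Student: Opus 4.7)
The plan is to adapt the standard linearization construction that extracts a $\C$-linear map from an $\R$-linear one, carefully tracking matricial norms. Given an almost complete $\R$-isomorphic embedding $(f^n:X\to Y)_n$ with uniform distortion at most $K$, I would define, for each $n\in\N$, a $\C$-linear map $g^n: X \to Y \oplus \overline{Y}$ by
$$g^n(x) := \Big(\tfrac{1}{2}\bigl(f^n(x) - i f^n(ix)\bigr),\ \tfrac{1}{2}\bigl(f^n(x) + i f^n(ix)\bigr)\Big),$$
where the second coordinate is to be read inside $\overline{Y}$. A direct check shows that the first coordinate defines a $\C$-linear map into $Y$, while the second coordinate, which is conjugate-$\C$-linear as a map into $Y$, is genuinely $\C$-linear as a map into $\overline{Y}$.

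The verification that $(g^n)_n$ is an almost complete $\C$-isomorphic embedding relies on two matricial facts: (i) the operator space direct sum satisfies $\M_n(Y\oplus \overline{Y}) = \M_n(Y) \oplus \M_n(\overline{Y})$ with the canonical direct sum norm, and (ii) given a matrix with entries in $Y$, its norm in $\M_n(\overline{Y})$ equals its norm in $\M_n(Y)$; this is because $\overline{Y}\subset\cB(\overline{H})$ differs from $Y\subset\cB(H)$ only in the $\C$-vector space structure, with $\overline{H}$ and $H$ agreeing as metric spaces and hence $\cB(\overline{H})$ and $\cB(H)$ agreeing as normed sets.

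After rescaling so that $\|[x_{ij}]\|\leq\|f^n_n([x_{ij}])\|\leq K\|[x_{ij}]\|$ for every $[x_{ij}]\in\M_n(X)$, the upper bound on $g^n_n$ follows from the triangle inequality applied to each coordinate, combined with the fact that scalar multiplication by $i$ on $\M_n(X)$ is an isometry; this yields $\|g^n_n([x_{ij}])\|\leq K\|[x_{ij}]\|$. For the lower bound I would observe that $f^n_n([x_{ij}])$ equals the sum of the two coordinates of $g^n_n([x_{ij}])$, with the second coordinate identified set-theoretically with an element of $\M_n(Y)$ of the same norm thanks to (ii); the triangle inequality then gives $\|f^n_n([x_{ij}])\|\leq 2\|g^n_n([x_{ij}])\|$, and therefore $\|[x_{ij}]\|\leq 2\|g^n_n([x_{ij}])\|$. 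This shows $(g^n)_n$ is an almost complete $\C$-isomorphic embedding with distortion at most $2K$.

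The only point requiring real care is the bookkeeping around the identification of $\M_n(\overline{Y})$ with $\M_n(Y)$ at the level of underlying sets and norms, which underlies fact (ii); everything else is a direct complex-versus-real analogue of the argument used for the coarse version in \cite[Proposition 4.5]{BragaChavezDominguez2020PAMS}, which is why the authors omit the details.
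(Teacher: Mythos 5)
Your proof is correct and is exactly the argument the paper has in mind: the paper omits the proof and refers to \cite[Proposition 4.5]{BragaChavezDominguez2020PAMS}, which rests on the same decomposition of an $\R$-linear map into its $\C$-linear part $\tfrac12(f(x)-if(ix))$ landing in $Y$ and its conjugate-linear part $\tfrac12(f(x)+if(ix))$ landing in $\overline{Y}$. The matricial bookkeeping you supply --- the isometric identification of $\M_n(\overline{Y})$ with $\M_n(Y)$ as normed sets and the fact that multiplication by $i$ is an isometry on $\M_n(X)$ --- is precisely what is needed to get the uniform distortion bound, so nothing is missing.
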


 \section{$n$-Lipschitz-free operator spaces}\label{SectionLipFreeSp}
 
Given a metric space $X$, one can assign to it a Banach space $\cF(X)$ which is called the \emph{Lipschitz-free (Banach) space of $X$}. This construction comes equipped with  a canonical isometry $\delta_X:X\to \cF(X)$ and  one of its main features is that for any Lipschitz map $L:X\to Y$ there is   an unique linear map $\tilde L:\cF(X)\to \cF(Y)$ so that $\|\tilde L\|=\Lip(L)$ and $\tilde L \circ \delta_X=\delta_Y\circ L$. This linearization process makes the Lipschitz-free spaces important tools when working with the nonlinear geometry of Banach spaces. In this section, we introduce the  operator space version of Lipschitz-free spaces and use it to show that there is a nontrivial theory of nonlinear geometry for operator spaces (see Theorem \ref{ThmAlmCompLipNotAlmLinearCompLip}). We refer to \cite{Arens-Eells,Weaver1999BookSecondEdition,GodefroyKalton2003,Godefroy2015Survey}
for detailed treatments of Lipschitz-free spaces. 
 
 A pair $(X,x_0)$, where $X$ is an operator metric space and $x_0\in X$, is called a \emph{pointed  operator metric space}. If $X$ is an operator space, we always view it as  a pointed operator metric space with the distinguished point $0\in X$. Given pointed operator metric spaces $(X,x_0)$ and $(Y,y_0)$,   let $\Lip_0(X,Y)$ be the set of all Lipschitz maps $f:X\to Y$ so that $f(x_0)=y_0$. If   $Y$ is an   operator   space,  for each $n\in\N$ we define a norm $\|\cdot \|_{\Lip,n}$ on  $\Lip_0(X,Y)$ by letting 
\[\|f\|_{\Lip,n}=\Lip_n(f).\]
If $n=1$, we write $\|\cdot\|_{\Lip}=\|\cdot\|_{\Lip,n}$
The norm $\|\cdot \|_{\Lip,n}$ is a Banach norm on  $\Lip_0(X,Y)$. Notice that
\begin{equation}
\label{Eq.LipLipnEquiv}
\Lip(f)\leq \Lip_n(f)\leq n^2\Lip(f)
\end{equation} 
for all $f\in \Lip_0(X,Y)$; so 
the norms $\|\cdot \|_{\Lip}$ and $\|\cdot \|_{\Lip,n}$ are equivalent. 

The canonical algebraic isomorphisms  \[\M_k(\Lip_0(X,Y))\cong \Lip_0(X,\M_k(Y))\ \text{ for }\ k\in\N\] induce a natural operator space structure on the Banach space $(\Lip_0(X,Y),\|\cdot\|_{\Lip,n})$.   Precisely,   for $k\in\N$ and   $[f_{ij}]\in \M_k(\Lip_0(X,Y))$, we let   \[\|[f_{ij}]\|_{\Lip,n,k}=\Lip_n\Big([f_{ij}]: X\to \M_k(Y)\Big).\] 

\begin{definition}
Let $(X,x_0)$ be a pointed operator metric space and  let $Y$ be an   operator  space. We denote the operator space   $(\Lip_0(X,Y),(\|\cdot\|_{\Lip,n,k})_{k\in\N})$ defined above by $\Lip^n_0(X,Y)$.
\end{definition}

\begin{remark}\label{remark-Lip^n_0-is-n-minimal}
Given a set $Z$, let $[Z]^2=\{(x,y)\in Z^2\mid x\neq y\}$. Then   the  Lipschitz  norm on $\Lip_0(X,\C)$ can be seen as the norm inherited by the embedding   \[f\in \Lip_0(X,\C)\mapsto \Big(\frac{f(x)-f(y)}{\|x-y\|}\Big)_{(x,y)\in [X]^2} \in \ell_\infty([X]^2,\C).\]
Similarly, given $n\in\N$,   the operator space structure on $\Lip^n_0(X,\C)$ is given by
the  embedding 
\[f\in \Lip_0(X,\C)\mapsto \Big(\frac{f_n(x)-f_n(y)}{\|x-y\|_{\M_n(X)}}\Big)_{(x,y)\in [\M_n(X)]^2}\in \ell_\infty([\M_n(X)]^2,\M_n).\]  
Therefore it follows from Smith's lemma \cite[Proposition 1.12]{Pisier-OS-book} that the $\cb$-norm of any linear map with values on $\Lip^n_0(X,\C)$ is equal to the norm of its $n$-th amplification, and thus $\Lip^n_0(X,\C)$ is an 
is an $n$-minimal operator space.
\end{remark}

The following proposition is straightforward, so we omit its proof.

\begin{proposition}\label{PropMapIn}
Let $H$ be a Hilbert space and $(X,x_0)$ be a pointed operator metric space with $  X	\subset \cB(H)$ and let $n\in\N$. The canonical map 
\[\iota:  \cB(H)^*\to \Lip_0^n(X,\C)\] given by $\iota(a)=a\restriction X-a(x_0)$, for all  $a\in \cB(H)^*$, 
is a complete contraction. \qed
\end{proposition}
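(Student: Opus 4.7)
The plan is to unpack the operator space identifications on both sides and reduce the claim to the trivial fact that the Lipschitz constant of an affine map equals the norm of its linear part.

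First I would fix $k \in \N$ and rewrite the matrix norm of an element $[a_{ij}] \in \M_k(\cB(H)^*)$ via the standard identification $\M_k(\cB(H)^*) \cong \CB(\cB(H), \M_k)$, i.e.\ $\|[a_{ij}]\|_{\M_k(\cB(H)^*)} = \|\Phi\|_{\cb}$, where $\Phi : \cB(H) \to \M_k$ is the linear map $\Phi(T) = [a_{ij}(T)]$. On the target side, the definition of the matrix norm on $\Lip_0^n(X,\C)$ gives
\[
\|[\iota(a_{ij})]\|_{\Lip,n,k} = \Lip_n\big([\iota(a_{ij})] : X \to \M_k\big) = \Lip\big([\iota(a_{ij})]_n : \M_n(X) \to \M_n(\M_k)\big).
\]

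Next I would observe that, by construction, $[\iota(a_{ij})] : X \to \M_k$ is the restriction to $X$ of the affine map $T \mapsto \Phi(T) - \Phi(x_0)$ defined on all of $\cB(H)$. Taking the $n$-th amplification entrywise, the map $[\iota(a_{ij})]_n : \M_n(X) \to \M_n(\M_k)$ is the restriction to $\M_n(X)$ of the affine map
\[
[T_{rs}] \longmapsto \Phi_n([T_{rs}]) - C,
\]
where $C \in \M_n(\M_k)$ is the constant matrix with every entry equal to $\Phi(x_0)$ and $\Phi_n : \M_n(\cB(H)) \to \M_n(\M_k)$ is the usual amplification of $\Phi$.

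The Lipschitz constant of an affine map equals the operator norm of its linear part, so
\[
\Lip\big([\iota(a_{ij})]_n\big) \leq \|\Phi_n : \M_n(\cB(H)) \to \M_n(\M_k)\| \leq \|\Phi\|_{\cb} = \|[a_{ij}]\|_{\M_k(\cB(H)^*)}.
\]
Chaining the equalities and inequalities above gives $\|[\iota(a_{ij})]\|_{\Lip,n,k} \leq \|[a_{ij}]\|_{\M_k(\cB(H)^*)}$ for every $k$, which is precisely the statement that $\iota$ is completely contractive.

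There is no real obstacle here; the only thing worth a moment of care is to track the two roles of $n$ and $k$ (the fixed amplification level $n$ built into the definition of $\Lip_0^n$, versus the running matrix level $k$ for the operator space structure of the domain and codomain), and to remember that affineness lets us pass from a Lipschitz estimate to a linear operator norm estimate without any loss.
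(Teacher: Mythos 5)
Your argument is correct, and it is precisely the straightforward verification the paper has in mind when it omits the proof: identify $\M_k(\cB(H)^*)$ with $\CB(\cB(H),\M_k)$, note that $[\iota(a_{ij})]_n$ is the restriction to $\M_n(X)$ of an affine map with linear part $\Phi_n$, and bound the Lipschitz constant by $\|\Phi_n\|\leq\|\Phi\|_{\cb}$. Nothing further is needed.
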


Let $(X,x_0)$ be a pointed operator metric space.  Given $x\in X$, define a map \[\delta_x: \Lip_0(X,\C)\to \C\] by letting \[\delta_x(f)=f(x)\ \text{ for all }\ f\in \Lip_0(X,\C).\] So $\delta_x\in \Lip_0(X,\C)^*$ for all $x\in X$. Notice that, given $n\in\N$, as $\Lip^n_0(X,\C)$ is an operator space, so is its dual $\Lip_0^n(X,\C)^*$. So we can   write  $\delta_x\in \Lip_0^n(X,\C)^*$ for all $x\in X$ and all $n\in\N$. 
 
\begin{definition}\label{DefNLipFree}
Let $(X,x_0)$  be a pointed operator metric space  and let $n\in \N$. We define the  \emph{$n$-Lipschitz-free operator space of $(X,x_0)$} as the Banach space 
\[\cF^n(X)=\overline{\Span}\{\delta_x\in \Lip^n_0(X,\C)^*\mid x\in X\}\]
together with the operator space structure inherited from $\Lip^n_0(X,\C)^*$. If $n=1$, we write $\cF(X)=\cF^1(X)$. 
\end{definition}

In the purely metric setting (i.e., when no operator structure is assumed), given a metric space $X$ , the  \emph{Lipschitz-free Banach space of $X$} is the Banach space $\cF(X)$. In Banach space theory,  $\cF(X)$ is usually defined with respect to real-valued Lipschitz maps $X\to \R$ with $f(x_0)=0$; see Subsection \ref{SubsectionRealLipFree} for more on that. 
 
 \begin{remark}
 Notice that the operator space structure of $\cF(X)=\cF^1(X)$ is generally not the one of $\mathrm{MIN}(\cF(X))$.
 First, by Remark \ref{remark-Lip^n_0-is-n-minimal}, $\Lip_0(X,\C)$ is minimal and so $\Lip_0(X,\C)^*$ is maximal,
which implies that $\cF^1(X)$ is \emph{submaximal} (i.e., it is a subspace of a maximal operator space).
In fact, more is true: by Remark \ref{RemNLipFreeIsNMaximal} below, $\cF^1(X)$ is itself maximal.
Since it is well-known that for an operator space of dimension at least 3 its minimal and maximal operator space structures are different (see the discussion after \cite[Theorem 14.3]{Paulsen2002}), we conclude that the operator space structure we have defined on $\cF(X)$ is not that of $\mathrm{MIN}(\cF(X))$ whenever $|X| > 3$ (and thus $\dim(\cF^1(X)) \ge 3$).
Moreover, this is sharp. It is clear that when $|X| \le 2$ the space $\cF(X)$ has a unique operator space structure, and the same can happen when $|X|=3$: if we take $X$ to be a path graph of length 2 endowed with the shortest path metric, $\cF(X)$ is isometric to $\ell_1^2$ and this space also has a unique operator space structure \cite[Page 190]{Paulsen2002}.
 \end{remark}

\begin{proposition}\label{PropIsoLipFreeAndNLipFree}
Let $(X,x_0)$  be a pointed operator metric space. Then, for each $n\in\N$,  $\cF(X)=\cF^n(X)$ as a set and the identity $\cF(X)\to \cF^n(X)$ is a complete isomorphism. 
\end{proposition}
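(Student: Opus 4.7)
The plan is to reduce the statement to a computation on the preduals $\Lip_0(X,\C)$ and $\Lip_0^n(X,\C)$ and then dualize. The key observation is that \eqref{Eq.LipLipnEquiv} is uniform in the target operator space, so applying it with target $\M_k$ for every $k$ yields
\[
\Lip(F)\leq \Lip_n(F)\leq n^2\,\Lip(F) \qquad \text{for every } F\in\Lip_0(X,\M_k),
\]
uniformly in $k$. Since $\Lip_0(X,\C)$ and $\Lip_0^n(X,\C)$ share the same underlying set with matricial norms $\Lip$ and $\Lip_n$ respectively on $\Lip_0(X,\M_k)$, this estimate says exactly that the formal identity $I:\Lip_0^n(X,\C)\to\Lip_0(X,\C)$ is a complete contraction, while its inverse $J:\Lip_0(X,\C)\to\Lip_0^n(X,\C)$ satisfies $\|J\|_{\cb}\leq n^2$.

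Next I would dualize. Because the operator-space duality introduced in the preliminaries preserves cb-norms, the adjoints $I^{\ast}$ and $J^{\ast}$ are cb with $\|I^{\ast}\|_{\cb}\leq 1$ and $\|J^{\ast}\|_{\cb}\leq n^{2}$. Both are the set-theoretic identity on the common Banach dual of $\Lip_0(X,\C)$ and $\Lip_0^n(X,\C)$, and each fixes every $\delta_x$. By continuity, $I^{\ast}$ and $J^{\ast}$ therefore restrict to mutually inverse linear maps between the closed spans $\overline{\Span}\{\delta_x\st x\in X\}$ taken inside each dual. This gives $\cF(X)=\cF^n(X)$ as sets and shows that the identity $\cF(X)\to\cF^n(X)$ is a complete isomorphism with $\|\mathrm{id}\|_{\cb}\leq 1$ and $\|\mathrm{id}^{-1}\|_{\cb}\leq n^{2}$.

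I do not expect any substantial obstacle: the whole argument is a rephrasing of \eqref{Eq.LipLipnEquiv} together with the standard duality property of cb-norms. The only point that deserves a brief check is that the bound $\Lip_n(F)\leq n^{2}\Lip(F)$ really does continue to hold with the same constant for matrix-valued $F$, i.e., for targets $\M_k$ with arbitrary $k$; this is exactly the version of \eqref{Eq.LipLipnEquiv} stated in the preliminaries (for an arbitrary operator space target), so no additional work is needed.
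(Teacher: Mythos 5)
Your proof is correct and follows essentially the same route as the paper: both arguments observe that the estimate $\Lip(F)\leq \Lip_n(F)\leq n^2\Lip(F)$ holds uniformly for $\M_k$-valued maps, so that the identity between $\Lip_0(X,\C)$ and $\Lip_0^n(X,\C)$ is a complete isomorphism, and then pass to adjoints and restrict to the closed span of the $\delta_x$'s. No gaps.
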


\begin{proof}
 As noticed above , the norms $\|\cdot \|_{\Lip}$ and $\|\cdot \|_{\Lip,n}$ are equivalent. Moreover, the  same argument gives that  $\|\cdot \|_{\Lip,1,k}$ and $\|\cdot \|_{\Lip,n,k}$ are $n^2$-equivalent for all $k$. 
 Hence, the adjoint of the identity map $(\Lip_0(X,\C),\|\cdot\|_{\Lip})\to(\Lip_0^n(X,\C),\|\cdot\|_{\Lip,n}) $ is a complete  isomorphism between $\Lip^n_0(X,\C)^*$ and $\Lip_0(X,\C)^*$. 
 So its restriction to $\cF^n(X)$ is a complete isomorphism between $\cF^n(X)$ and $\cF(X)$.
\end{proof}

We now show that the basic properties of Lipschitz-free Banach spaces have operator space versions. 
 
\begin{proposition}\label{PropDeltaCompIsomIntoFreLipSp}
Let $(X,x_0)$ be a pointed operator metric space and let $n\in\N$. The map \[\delta_{X}^n:x\in X\mapsto\delta_x\in \cF^n(X)\] is an $n$-isometric embedding.
\end{proposition}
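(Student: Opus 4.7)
The plan is to verify that the $n$-th amplification $(\delta_X^n)_n : \M_n(X) \to \M_n(\cF^n(X))$ preserves distances. I would fix $[x_{ij}], [y_{ij}] \in \M_n(X)$ and unpack the norm of $(\delta_X^n)_n([x_{ij}]) - (\delta_X^n)_n([y_{ij}]) = [\delta_{x_{ij}} - \delta_{y_{ij}}]$ through the canonical identification $\M_n(\Lip_0^n(X,\C)^*) \cong \CB(\Lip_0^n(X,\C), \M_n)$. Under this identification, the matrix $[\delta_{x_{ij}} - \delta_{y_{ij}}]$ corresponds to the linear map $T : \Lip_0^n(X,\C) \to \M_n$, $T(f) = [f(x_{ij}) - f(y_{ij})]$, so the task becomes showing $\|T\|_{\cb} = \|[x_{ij}] - [y_{ij}]\|_{\M_n(X)}$. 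Since $T$ takes values in $\M_n$, Smith's lemma (\cite[Proposition 1.12]{Pisier-OS-book}) collapses the cb-norm to $\|T\|_{\cb} = \|T_n\|$.

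For the upper bound I would exploit the canonical identification $\M_n(\Lip_0(X,\C)) \cong \Lip_0(X, \M_n)$: an element $[f_{kl}] \in \M_n(\Lip_0^n(X,\C))$ corresponds to $F \in \Lip_0(X, \M_n)$, $F(x) = [f_{kl}(x)]$, with $\|[f_{kl}]\|_{\Lip,n,n} = \Lip(F_n)$ by definition. A direct inspection of indices shows that $T_n([f_{kl}])$ and $F_n([x_{ij}]) - F_n([y_{ij}])$ have the same entries in $\M_n(\M_n)$ but the outer pair $(k,l)$ and the inner pair $(i,j)$ are swapped; this swap is conjugation by the flip unitary on $\C^n \otimes \C^n$, hence completely isometric. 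Consequently $\|T_n([f_{kl}])\| = \|F_n([x_{ij}]) - F_n([y_{ij}])\| \leq \Lip(F_n)\,\|[x_{ij}] - [y_{ij}]\|_{\M_n(X)}$, and taking the supremum over $[f_{kl}]$ of norm at most one yields $\|T_n\| \leq \|[x_{ij}] - [y_{ij}]\|_{\M_n(X)}$.

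For the lower bound I would embed $X \subset \cB(H)$ and invoke Proposition \ref{PropMapIn}: the map $\iota : \cB(H)^* \to \Lip_0^n(X,\C)$ defined by $\iota(a) = a\restriction X - a(x_0)$ is a complete contraction, so $\|T\|_{\cb} \geq \|T \circ \iota\|_{\cb}$. The composition $T \circ \iota : \cB(H)^* \to \M_n$ sends $a \mapsto [a(x_{ij} - y_{ij})]$, which is precisely the image of $[x_{ij}] - [y_{ij}]$ under the canonical completely isometric embedding $\M_n(\cB(H)) \hookrightarrow \M_n(\cB(H)^{**}) \cong \CB(\cB(H)^*, \M_n)$ (\cite{BlecherPaulsen1991}, \cite{EffrosRuan1991}). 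Therefore $\|T \circ \iota\|_{\cb} = \|[x_{ij}] - [y_{ij}]\|_{\M_n(\cB(H))} = \|[x_{ij}] - [y_{ij}]\|_{\M_n(X)}$, matching the upper bound.

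The main obstacle I anticipate is the index-shuffle bookkeeping in the upper bound, which requires recognizing that the two natural ways of presenting an element of $\M_n(\M_n)$ differ by conjugation by the flip unitary; once this is sorted, the lower bound is essentially automatic from Proposition \ref{PropMapIn} together with the fact that the canonical embedding of an operator space into its operator space bidual is completely isometric.
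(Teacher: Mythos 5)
Your proposal is correct, and its skeleton matches the paper's: identify $[\delta_{x_{ij}}-\delta_{y_{ij}}]$ with a map $T:\Lip_0^n(X,\C)\to\M_n$, get the upper bound from the definition of the matrix norms on $\Lip_0^n(X,\C)$, and get the lower bound by pulling functionals on $\cB(H)$ back through the map $\iota$ of Proposition \ref{PropMapIn}. The two places where you diverge are both streamlinings rather than new ideas. For the upper bound, the paper simply tests $T_k$ against the unit ball of $\M_k(\Lip_0^n(X,\C))$ for every $k$ (which is no harder than the case $k=n$), so your appeal to Smith's lemma is not needed, though it is harmless; your explicit handling of the index flip between $\M_n(\M_k)$ and $\M_k(\M_n)$ is a point the paper glosses over, and making it explicit is a genuine improvement in rigor. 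For the lower bound, the paper argues by hand: it picks nearly norming vectors $\bar\xi,\bar\zeta\in H^{\oplus n}$, compresses to the finite-dimensional space $K$ they span to produce a complete contraction $g:\cB(H)\to\M_k$ viewed as an element of the unit ball of $\M_k(\cB(H)^*)$, and evaluates $T_k(\iota_k(g))$. You instead observe that $T\circ\iota$ is exactly the image of $[x_{ij}-y_{ij}]$ under the canonical embedding $\M_n(\cB(H))\hookrightarrow\M_n(\cB(H)^{**})\cong\CB(\cB(H)^*,\M_n)$ and cite the complete isometry of the bidual embedding. This is cleaner and avoids the $\eps$-bookkeeping; the paper's compression argument is essentially an inlined proof of the same fact, and has the mild advantage of being self-contained and of producing the concrete finite-rank functionals that are reused verbatim in the proof of Lemma \ref{LemmaLinearizationLipMaps}. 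Both routes are valid.
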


\begin{proof}
Let $\delta=\delta_{X}^n$ and fix $[x_{ij}],[y_{ij}]  \in \M_n(X)$. So \[\delta_n([x_{ij}])-\delta_n([y_{ij}]) =[\delta_{x_{ij}}  -\delta_{y_{ij}}  ]\in \M_n(\cF^{n}(X))\subset \CB(\Lip_0^{n}(X,\C),\M_n).\]
Let $k\in\N$ and $[f_{m\ell}]\in \M_k(\Lip_0^{n}(X,\C))$ with $\|[f_{m\ell}]\|_{\Lip,n,k}\leq 1$. As  \[[\delta_{x_{ij}}  -\delta_{y_{ij}}  ]_n([f_{m\ell}])=[f_{m\ell}(x_{ij})-f_{m\ell}(y_{ij})], \] it follows from the definition of the norm $\|\cdot\|_{\Lip,n,k}$ that 
\[\|[\delta_{x_{ij}} -\delta_{y_{ij}}  ]_n([f_{m\ell}])\|_{\M_{nk}} \leq \|[x_{ij}-y_{ij}]\|_{\M_n(X)}.\]
Hence, $\|\delta_n([x_{ij}])-\delta_n([y_{ij}])\|_{\M_n(\cF^{n}(X))}\leq\|[x_{ij}-y_{ij}]\|_{\M_n(X)}$.

For the reverse inequality, fix a Hilbert space $H$ so that $ X\subset \cB(H)$ as an operator metric space. Fix $\eps>0$ and pick unit vectors $\bar\xi=(\xi_i)_i,\bar\zeta=(\zeta_i)_i\in H^{\oplus n}$ such that \[\Big|\sum_{i,j}\langle (x_{ij}-y_{ij})\xi_j,\zeta_i\rangle\Big|=\big|\langle[x_{ij}-y_{ij} ]\bar\xi,\bar\zeta\rangle\big|\geq \|[x_{ij}-y_{ij} ]\|_{\M_n(X)}-\eps.\] 
Let $K=\Span\{\xi_1,\ldots,\xi_n,\zeta_1,\ldots,\zeta_n\}$, let $P_K:H\to K$ be the orthogonal projection onto $K$, and let $g:\cB(H) \to \cB(K)$ be the map given by $g(x)=P_Kx\restriction K$ for all $x\in \cB(H)$. So $g$ is a completely contractive linear map.  As  $K$ has finite dimension, there is  no loss of generality to assume that $\cB(K)=\M_k$ for $k=\dim(K)$.

Let $\iota:  \cB(H)^*\to \Lip_0^n(X,\C)$ be given by Proposition \ref{PropMapIn}. As $g:\cB(H)\to \M_k$ is a complete contraction,   $g$ is in the unit ball of $ \M_k(\cB(H)^*)$; so Proposition \ref{PropMapIn} implies that $\iota_k(g)$ is in the unit ball of $\M_k(\Lip_0^n(X,\C))$.  Hence, we have 
\begin{align*}
\|[\delta_{x_{ij}}-\delta_{y_{ij}}]\|_{\M_n(\cF^{n}(X))}&=\|[\delta_{x_{ij}}-\delta_{y_{ij}}]\|_{\CB(\Lip_0^n(X,\C),\M_n)}\\
&\geq \|[\delta_{x_{ij}}-\delta_{y_{ij}}]_n(\iota_k(g))\|_{\M_{nk}}\\
&=\|[\iota_k(g)(x_{ij})-\iota_k(g)(y_{ij})]\|_{\M_{nk}}\\
&=\|[g(x_{ij}- y_{ij})]\|_{\M_{nk}}.
\end{align*}
Therefore, by definition of $g$, we conclude that 
\[\|[\delta_{x_{ij}}-\delta_{y_{ij}}]\|_{\M_n(\cF^{n}(X))}\geq  \|[x_{ij} ]-[y_{ij} ]\|_{\M_n(X)}-\eps .\]
As $\eps$ was arbitrary, this shows  that \[\|\delta_n([x_{ij}])-\delta_n([y_{ij}])\|_{\M_n(\cF^{n}(X))}\geq \|[x_{ij}]-[y_{ij}]\|_{\M_n(X)}\] and the result follows.
\end{proof}

The following corollary follows straightforwardly from Proposition 
\ref{PropDeltaCompIsomIntoFreLipSp}.

\begin{corollary}\label{CorACLEmbSumLipFree}
Every  pointed operator  metric space $(X,x_0)$    almost completely isometrically embeds into $\big(\bigoplus_{n\in\N}\cF^n(X)\big)_{c_0}$.\qed
\end{corollary}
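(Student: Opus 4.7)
The plan is to build the sequence of maps explicitly out of the embeddings $\delta_X^n$ provided by Proposition \ref{PropDeltaCompIsomIntoFreLipSp}, placing the $n$-th embedding into the $n$-th coordinate of the $c_0$-sum. Since $\delta_X^n(x_0)=0$ (because every $f\in \Lip_0(X,\C)$ vanishes at $x_0$), the map
\[
F^n:X\longrightarrow \Big(\bigoplus_{k\in\N}\cF^k(X)\Big)_{c_0},\qquad F^n(x)=\big(0,\ldots,0,\delta_X^n(x),0,\ldots\big)
\]
(with $\delta_X^n(x)$ in the $n$-th coordinate) is well-defined: only the $n$-th coordinate is nonzero, so $F^n(x)$ trivially lies in the $c_0$-sum. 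Note that Definition \ref{DefiAlmostCompLipEmb} does not require the $f^n$ to be linear or basepoint-preserving, so there is nothing else to check about the definition of $F^n$.

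Next I would verify that the $n$-th amplification $F^n_n$ is an isometry. For $[x_{ij}],[y_{ij}]\in\M_n(X)$, the matrix $F^n_n([x_{ij}])-F^n_n([y_{ij}])$ sits in $\M_n\big((\bigoplus_k \cF^k(X))_{c_0}\big)$, and by the standard operator space structure on a $c_0$-direct sum, its norm equals
\[
\sup_{k\in\N}\big\|[P_k F^n(x_{ij})-P_k F^n(y_{ij})]\big\|_{\M_n(\cF^k(X))},
\]
where $P_k$ denotes the coordinate projection onto the $k$-th summand. Since $P_k\circ F^n=0$ for $k\neq n$ and $P_n\circ F^n=\delta_X^n$, this supremum collapses to $\|\delta^n_{X,n}([x_{ij}])-\delta^n_{X,n}([y_{ij}])\|_{\M_n(\cF^n(X))}$, which by Proposition \ref{PropDeltaCompIsomIntoFreLipSp} equals $\|[x_{ij}-y_{ij}]\|_{\M_n(X)}$. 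Thus each $F^n_n$ is an isometry, and $(F^n)_n$ is an almost complete isometric embedding in the sense of Definition \ref{DefiAlmostCompLipEmb}.

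There is no substantive obstacle here; the content is essentially a packaging of Proposition \ref{PropDeltaCompIsomIntoFreLipSp} into the format of Definition \ref{DefiAlmostCompLipEmb}. The only point worth being careful about is the identification of the operator space norm on an $n\times n$ matrix over a $c_0$-direct sum as the supremum of the coordinatewise $\M_n$-norms, which is standard for operator space $c_0$-sums and causes the nontrivial information at level $n$ to be exactly recovered from the $n$-th summand.
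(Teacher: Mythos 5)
Your proof is correct and is exactly the argument the paper intends: the paper gives no explicit proof, stating only that the corollary ``follows straightforwardly from Proposition \ref{PropDeltaCompIsomIntoFreLipSp}'', and your construction --- placing $\delta_X^n$ in the $n$-th coordinate and using the supremum formula for the matrix norms on an operator space $c_0$-sum to reduce $F^n_n$ to $\delta^n_{X,n}$ --- is the straightforward packaging being alluded to. Nothing is missing.
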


Let $(X_\lambda)_{\lambda\in \Lambda}$ be a family   of  operator metric spaces and $\cU$ be an ultrafilter on  $\Lambda$. The ultraproduct of  $(X_\lambda)_{\lambda\in \Lambda}$ with respect to $\cU$ is denoted by $(\prod_{\lambda\in \Lambda}X_\lambda)/\cU$ and its elements, i.e., equivalence classes of elements in $(\prod_{\lambda\in \Lambda}X_\lambda)$, are denoted by $[(x(\lambda))_\lambda]$.  The ultraproduct $Y=(\prod_{\lambda\in \Lambda}X_\lambda)/\cU$ has a canonical operator space structure so that if  $n\in\N$ and $[x_{ij}]\in \M_n(Y)$, then \[\|[x_{ij}]\|_{\M_n(Y)}=\lim_{\lambda,\cU} \|[x_{ij}(\lambda)]\|_{\M_n(X)},\]
where $x_{ij}=[(x_{ij}(\lambda))_\lambda]\in Y$ for each $i,j\in \{1,\ldots,n\}$.
In particular, it easily follows that the inclusion $I_{X}:X\to Y$ given by $I_{X}(x)=[(x)_\lambda]$ is  a complete isometry into a subspace of $Y$. If $\Lambda=\N$ and $(X_n)_{n\in\N}$ is a constant family, say $X=X_n$, we denote $(\prod_{\lambda\in \Lambda}X_\lambda)/\cU$  by $X^\cU$. 

The next corollary also follows  straightforwardly from Proposition 
\ref{PropDeltaCompIsomIntoFreLipSp}. The reader can see a similar computation in   \cite[Proposition 4.4]{BragaChavezDominguez2020PAMS}.

\begin{corollary}\label{CorCompEmbIntoUltraPowerLipFree}
Let  $(X,x_0)$ be a pointed operator metric space and $\cU$ be a nonprincipal ultrafilter on $\N$. Then \[x\in X\mapsto \big[(\delta_X^n(x))\big]_n\in    \Big(\prod_n \cF^n(X)\Big)/\cU\] is a complete isometric embedding.\qed
\end{corollary}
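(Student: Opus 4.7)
The plan is to unpack the ultraproduct norm level-by-level and reduce to Proposition \ref{PropDeltaCompIsomIntoFreLipSp} by a padding trick. Denote the map by $\Phi(x)=\bigl[(\delta_X^n(x))_n\bigr]$ and let $Y=(\prod_n\cF^n(X))/\cU$. Fix $k\in\N$ and $[x_{ij}],[y_{ij}]\in \M_k(X)$. From the definition of the operator space structure on an ultraproduct recalled right before the statement, one has
\[
\bigl\|\Phi_k([x_{ij}])-\Phi_k([y_{ij}])\bigr\|_{\M_k(Y)}=\lim_{n,\cU}\bigl\|(\delta_X^n)_k([x_{ij}])-(\delta_X^n)_k([y_{ij}])\bigr\|_{\M_k(\cF^n(X))},
\]
so it suffices to compute the inner norm for cofinally many $n$ in $\cU$.

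The main point is to transfer the $n$-isometry of $\delta_X^n$ from Proposition \ref{PropDeltaCompIsomIntoFreLipSp} down to a $k$-isometry for every $k\leq n$. For such $n$, embed $[x_{ij}]$ into $\M_n(X)$ as $[\tilde x_{ij}]$, where $\tilde x_{ij}=x_{ij}$ for $i,j\leq k$ and $\tilde x_{ij}=x_0$ otherwise; do the same for $[y_{ij}]$. The difference $[\tilde x_{ij}-\tilde y_{ij}]$ then has $[x_{ij}-y_{ij}]$ in its top-left $k\times k$ corner and zeros elsewhere, so its $\M_n(X)$-norm (computed as an operator on $H^{\oplus n}$) equals $\|[x_{ij}-y_{ij}]\|_{\M_k(X)}$. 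Applying the $n$-th amplification gives $[\delta_{\tilde x_{ij}}-\delta_{\tilde y_{ij}}]$, which likewise has $[\delta_{x_{ij}}-\delta_{y_{ij}}]$ in its top-left $k\times k$ corner and zeros elsewhere, so its $\M_n(\cF^n(X))$-norm equals the $\M_k(\cF^n(X))$-norm of that corner. By Proposition \ref{PropDeltaCompIsomIntoFreLipSp}, the $\M_n$-norms of the two padded matrices coincide, which yields
\[
\bigl\|(\delta_X^n)_k([x_{ij}])-(\delta_X^n)_k([y_{ij}])\bigr\|_{\M_k(\cF^n(X))}=\bigl\|[x_{ij}-y_{ij}]\bigr\|_{\M_k(X)}\qquad(n\geq k).
\]

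Since $\cU$ is nonprincipal, $\{n\in\N\st n\geq k\}\in\cU$, so the ultralimit above equals $\|[x_{ij}-y_{ij}]\|_{\M_k(X)}$, showing that $\Phi_k$ is an isometry. As $k$ was arbitrary, $\Phi$ is a complete isometric embedding. The only mildly delicate step is the passage from $n$-isometry to $k$-isometry for $k\leq n$, but this is handled cleanly by the corner-padding argument together with the fact that the operator norm of a matrix with a zero border equals that of the interior block, and the computation essentially mirrors \cite[Proposition 4.4]{BragaChavezDominguez2020PAMS}.
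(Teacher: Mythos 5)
Your proof is correct and is exactly the ``straightforward'' argument the paper leaves to the reader: unpack the ultraproduct norm levelwise, upgrade the $n$-isometry of Proposition \ref{PropDeltaCompIsomIntoFreLipSp} to a $k$-isometry for $k\leq n$ by padding with the base point $x_0$ (so that differences pad with zeros and the bordered-by-zero norm identity applies on both sides), and use that $\{n \st n\geq k\}\in\cU$ for a nonprincipal ultrafilter. No gaps.
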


We can now show that  the notion of almost complete Lipschitz embeddability is a truly nonlinear notion; in particular,  this  solves \cite[Question 4.3]{BragaChavezDominguez2020PAMS}.

\begin{proof}[Proof of Theorem \ref{ThmAlmCompLipNotAlmLinearCompLip}]
Let $H$ be any  nonseparable Hilbertian operator space.   By Corollary \ref{CorACLEmbSumLipFree}, $H$ almost completely isometrically embeds into \[Y=\big(\bigoplus_{n\in\N}\cF^n(H)\big)_{c_0}.\] Hence, we only need to show that  $H$ does not $\R$-isomorphically embed into $\big(\bigoplus_{n\in\N}\cF^n(H)\big)_{c_0}$. Items \eqref{ItemThm1} and \eqref{ItemThm2} will then follow immediately.

 For the remainder  of this proof,
we consider each $\cF^n(H)$ as an $\R$-Banach space and suppose $T:H\to Y$ is an $\R$-isomorphic embedding. By a sliding hump argument, for all $\eps>0$ there is $k\in\N$ and a finite codimensional subspace $H'\subset H$ such that $\|P_n\circ T\restriction H'\|\leq \eps$, where $P_n:Y\to (\bigoplus _{n>k}\cF^n(H))_{c_0}$ is the canonical projection. Choosing $\eps$ small enough and letting $Q_n=\mathrm{Id}_Y-P_n$, this gives that $Q_n\circ T\restriction H'$ is an isomorphic embedding of $H'$ into $\bigoplus _{n=1}^k\cF^n(H) $. As $H'$ has finite codimension, it is isomorphic to $H$. Therefore, this shows that $H$ isomorphically embeds into $  \bigoplus _{n=1}^k\cF^n(H) $. Since each $\cF^n(H)$ is isomorphic to $\cF(H)$ (Proposition \ref{PropIsoLipFreeAndNLipFree}), we conclude that $H$ isomorphically embeds into $ \bigoplus _{n=1}^k\cF(H)$.

By \cite[Theorem 3.1]{Kaufmann2015Studia}, $\cF(H)$ is isomorphic to $ (\bigoplus _{n=1}^\infty\cF(H))_{\ell_1}$. Therefore,    $  \bigoplus _{n=1}^k\cF(H) $ embeds isomorphically into $\cF(H)$ (in fact, \cite[Theorem 3.1]{Kaufmann2015Studia} combined with Pelczynski's decomposition technique gives us that these two spaces are isomorphic). Therefore, by the previous paragraph, we have that $H$ isomorphically embeds into $\cF(H)$. However, \cite[Theorem 5.21]{Weaver1999BookSecondEdition} says that, as $H$ is nonseparable,  this cannot happen; a contradiction. \qedhere

\end{proof}

 Another important property of the Banach space $\cF(X)$ is that it is an isometric predual for $\Lip_0(X,\C)$. The next proposition gives the operator space version of this result.

\begin{proposition}\label{prop-duality-for-F^n}
Let $(X,x_0)$ be a pointed operator metric space  and $n\in\N$. Then $\cF^{n}(X)^*$ is completely isometric to $\Lip_0^{n}(X,\C)$. Moreover, under this complete isometry, the weak$^*$ topology on $\cF^{n}(X)^*$ coincides on the unit ball of $\Lip_0^{n}(X,\C)$ with the pointwise convergence topology.
\end{proposition}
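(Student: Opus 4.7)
The plan is to exhibit the canonical pairing as a surjective complete isometry. Define $\Phi\colon \Lip_0^{n}(X,\C) \to \cF^{n}(X)^*$ by $\Phi(f)(\mu) = \mu(f)$, where $\mu \in \cF^{n}(X)$ is interpreted through the defining inclusion $\cF^{n}(X) \subseteq \Lip_0^{n}(X,\C)^*$. The candidate inverse is $\Psi(\varphi)(x) = \varphi(\delta_x)$, which automatically satisfies $\Psi(\varphi)(x_0) = 0$, since $\delta_{x_0} = 0$ in $\Lip_0^{n}(X,\C)^*$ by the definition of $\Lip_0$.

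For complete contractivity, I would fix $[f_{ij}] \in \M_k(\Lip_0^{n}(X,\C))$ and test $\Phi_k([f_{ij}])$ against an arbitrary $[\mu_{pq}] \in \M_m(\cF^{n}(X))$ of norm at most $1$. The key point is that the matrix norms on $\cF^{n}(X)$ are those inherited from $\M_m(\Lip_0^{n}(X,\C)^*) \cong \CB(\Lip_0^{n}(X,\C),\M_m)$, so $[\mu_{pq}]$ acts as a cb-map $\Lip_0^{n}(X,\C) \to \M_m$ of cb-norm at most $1$. A direct computation shows that $\Phi_k([f_{ij}])_m([\mu_{pq}])$ equals, modulo the canonical shuffle $\M_m(\M_k) \cong \M_k(\M_m)$, the element $[\mu_{pq}]_k([f_{ij}]) \in \M_k(\M_m)$: both matrices have the same scalar entries $\mu_{pq}(f_{ij})$ merely arranged differently. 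Its norm is then at most $\|[\mu_{pq}]\|_{\cb} \cdot \|[f_{ij}]\|_{\Lip,n,k} \leq \|[f_{ij}]\|_{\Lip,n,k}$, yielding $\|\Phi_k([f_{ij}])\|_{\M_k(\cF^{n}(X)^*)} \leq \|[f_{ij}]\|_{\Lip,n,k}$.

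For the reverse inequality I would witness $\|[f_{ij}]\|_{\Lip,n,k}$ by matrices $[x_{pq}],[y_{pq}] \in \M_n(X)$ and rewrite the numerator of the associated difference quotient as $\Phi_k([f_{ij}])_n([\delta_{x_{pq}} - \delta_{y_{pq}}])$. Proposition \ref{PropDeltaCompIsomIntoFreLipSp} identifies the matrix norm of $[\delta_{x_{pq}} - \delta_{y_{pq}}]$ in $\M_n(\cF^{n}(X))$ with that of $[x_{pq} - y_{pq}]$ in $\M_n(X)$, cancelling the denominator and giving $\|[f_{ij}]\|_{\Lip,n,k} \leq \|\Phi_k([f_{ij}])\|_{\cb}$. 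An entirely analogous amplification estimate applied to any $\varphi \in \cF^{n}(X)^*$ shows $\Psi(\varphi) \in \Lip_0^{n}(X,\C)$ with $\|\Psi(\varphi)\|_{\Lip,n} \leq \|\varphi\|$, and $\Phi(\Psi(\varphi)) = \varphi$ follows by agreement on the total set $\{\delta_x : x \in X\}$ and continuity, so $\Phi$ is a surjective complete isometry.

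For the topology statement, on the unit ball $B$ of $\Lip_0^{n}(X,\C)$ the pointwise topology is generated by the maps $f \mapsto f(x) = \Phi(f)(\delta_x)$, and since each $\delta_x$ lies in $\cF^{n}(X)$ these are weak$^*$ continuous, so the identity from $(B,\text{weak}^*)$ to $(B,\text{pointwise})$ is continuous. Banach--Alaoglu makes $(B,\text{weak}^*)$ compact while pointwise convergence is Hausdorff, so this continuous bijection must be a homeomorphism. The main technical care will lie in the cb-norm calculation above: handling the shuffle between $\M_m(\M_k)$ and $\M_k(\M_m)$ carefully, and exploiting that the operator space structure on $\cF^{n}(X)$ is defined so that its level-$m$ norms are literally cb-norms of maps $\Lip_0^{n}(X,\C) \to \M_m$, which lets the argument go through without separately invoking the $n$-maximality of $\cF^{n}(X)$.
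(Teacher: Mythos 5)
Your proposal is correct and follows essentially the same route as the paper: you use the same pair of mutually inverse maps (evaluation $f\mapsto(\mu\mapsto\mu(f))$ and $\varphi\mapsto\varphi(\delta_{\cdot})$), and you obtain the lower bound on the matrix norms from Proposition \ref{PropDeltaCompIsomIntoFreLipSp} exactly as the paper does. The only differences are cosmetic: you establish complete contractivity via the matrix pairing and the canonical shuffle $\M_m(\M_k)\cong\M_k(\M_m)$ (the paper computes the supremum over finite combinations $\sum_i a_i\delta_{x_i}$ and declares the amplified case analogous), and you spell out the Banach--Alaoglu compactness argument for the topology statement, which the paper leaves as immediate.
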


\begin{proof}
We define a map $u:\Lip_0^{n}(X,\C)\to \cF^{n}(X)^*$ by letting \[u(f)\Big(\sum_ia_i\delta_{x_i} \Big)=\sum_ia_if(x_i)\] for all   $a_1,\ldots, a_m\in\C$, all $x_1,\ldots, x_m\in X$, and all $f\in \Lip_0^{n}(X,\C)$.
Clearly, $u(f)$ is linear on the span of $\{\delta_x\}_{x\in X}$. Moreover, 
 we have that \[\|u(f)\|_{\cF^{n}(X)^*}=\sup\Big|\sum_ia_if(x_i)\Big|=\sup\Big|\Big(\sum_ia_i\delta_{x_i} \Big)(f)\Big|\leq \|f\|_{\Lip,n},\]
where the suprema above are taken over all $a_1,\ldots, a_m\in\C$ and $x_1,\ldots, x_m\in X$ such that $\|\sum_ia_i\delta_{x_i} \|_{\cF^{n}(X)}\leq 1$. This implies that  $u(f)$ extends uniquely to a functional on $\cF^{n}(X)$ with norm at most $\|f\|_{\Lip,n}$; so $u$ is a  well defined contraction. Let $v:\cF^{n}(X)^*\to \Lip_0^{n}(X,\C)$ be given by $v(g)(x)=g(\delta_x )$ for all $g\in \cF^{n}(X)^*$ and all $ x\in X$. By Proposition \ref{PropDeltaCompIsomIntoFreLipSp}, $v(g)$ is indeed Lipschitz, so $v$ is   well defined. Moreover,   it is straightforward to check that $v$ is the inverse of $u$. Hence, we only need to check that $u$ is a $k$-isometry for all $k\in\N$.

Fix $m\in\N$. The fact that $u$ is an  $m$-contraction follows completely analogously to the proof that $u$ is a contraction, so we omit the details. Now let  $[f_{k\ell }]\in \M_m(\Lip_0^n (X,\C))$ and $\eps>0$, and pick   $[x_{ij}],[y_{ij}]\in \M_n(X)$ such that 
\[\|[f_{k\ell }]\|_{\Lip,n,m}\leq \frac{\|[f_{k\ell}(x_{ij})-f_{k\ell}(y_{ij})]\|_{\M_{nm}}}{\|[x_{ij}-y_{ij}]\|_{\M_n(X)}} +\eps.\]  By Proposition \ref{PropDeltaCompIsomIntoFreLipSp}, $\|[x_{ij}-y_{ij}]\|_{\M_n(X)}=\|[\delta_{x_{ij}}-\delta_{y_{ij}}]\|_{\M_n(\cF^n(X))}$, and  we have that 
\begin{align*}
 \Big\|\Big[\frac{f_{k\ell}(x_{ij})-f_{k\ell}(y_{ij})}{\|[x_{ij}-y_{ij}]\|_{\M_{n}(X)}}\Big]\Big\|_{\M_{nm}}
&=\Big\|\Big[u(f_{k\ell})\Big(\frac{\delta_{x_{ij}}-\delta_{y_{ij}}}{\|[\delta_{x_{ij}}-\delta_{y_{ij}}]\|_{\M_n(\cF^n(X))}}\Big)\Big]\Big\|_{\M_{nm}}\\
&\leq \|u_m([f_{k\ell}])\|_{\M_m(\cF^n(X)^*)}.    
\end{align*}
Since $\eps>0$ was arbitrary, this shows that $\|u_n([f_{k\ell}])\|_{\M_m(\cF^n(X)^*)}=\|f\|_{\Lip,n,m}$, so $u$ is an $m$-isometry. 

The last statement in the proposition follows immediately form the formula of the complete isometry presented above.
\end{proof}

\begin{remark}\label{RemNLipFreeIsNMaximal}
The previous result shows that $\mathcal F^n(X)$ is the predual of  an $n$-minimal operator space, which implies that $\mathcal F^n(X)$ is $n$-maximal. To see this, note that if $E$ is an $n$-minimal operator space, then $E^*$ is $n$-maximal by \cite[Lemma 2.4]{OikhbergRicard2004MathAnn}. Thus $\mathcal F^n(X)^{**}$ is $n$-maximal. Now, for any operator spaces $E$ and $F$ and any linear map $v: E\to F$, we have that $\|v^{**}\|_{cb} = \|v\|_{cb}$ by the proof of \cite[Theorem 2.5]{Blecher1992}; hence, $\mathcal F^n(X)$ is itself $n$-maximal.
\end{remark}

The next lemma is one of the main benefits of the Lipschitz-free construction and it works as a tool to linearize certain problems (cf. \cite[Lemma 2.2]{GodefroyKalton2003}).

\begin{lemma}\label{LemmaLinearizationLipMaps}
Let $(X,x_0)$ and $(Y,y_0)$ be pointed operator metric spaces and let $n\in\N$. For any  $L\in \Lip_0(X,Y)$, the $\cb$-norm of the map 
\[f\in \Lip_0^{n}(Y,\C)\mapsto f\circ L\in \Lip_0^{n}(X,\C)\]
equals $\Lip_n(L)$. Moreover,   there exists a unique  bounded linear map $\tilde L:\cF^{n}(X)\to \cF^{n}(Y)$ such that $\delta^n_{Y}L=\tilde L\delta^n_{X}$, i.e., the diagram below commutes.
\[\xymatrix{X\ar[r]^L\ar[d]_{\delta^n_{X}}&Y\ar[d]^{\delta^n_{Y}}\\
\cF^{n}(X)\ar[r]^{\tilde L}& \cF^{n}(Y)
}\]
Furthermore,  $\|\tilde L \|_{\cb}=\Lip_n(L)$.  
\end{lemma}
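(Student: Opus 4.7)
The plan is to first bound the cb-norm of the pullback map $C_L : f \mapsto f\circ L$ from above by $\Lip_n(L)$, then to construct $\tilde L$ as the restriction of the adjoint $C_L^{*}$ to $\cF^n(X)$, and finally to obtain matching lower bounds from the fact that the $\delta$-maps are $n$-isometries.

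For the upper bound, since $\Lip_0^n(X,\C)$ is $n$-minimal by Remark \ref{remark-Lip^n_0-is-n-minimal}, Smith's lemma gives $\|C_L\|_{\cb} = \|(C_L)_n\|$. Under the isometric identification $\M_n(\Lip_0^n(Y,\C))\cong \Lip_0^n(Y,\M_n)$ that is built into the definition of $\|\cdot\|_{\Lip,n,n}$, the $n$-th amplification becomes $F\mapsto F\circ L$. Since $(F\circ L)_n = F_n\circ L_n$, the usual Lipschitz inequality yields $\|F\circ L\|_{\Lip,n}\le \Lip_n(L)\cdot\|F\|_{\Lip,n}$, so $\|C_L\|_{\cb}\le \Lip_n(L)$. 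Next, for $f\in \Lip_0^n(Y,\C)$ and $x\in X$ one computes
$$C_L^{*}(\delta_x)(f) = \delta_x(f\circ L) = f(L(x)) = \delta_{L(x)}(f),$$
so $C_L^{*}$ sends each $\delta_x$ to $\delta_{L(x)}$. By linearity and continuity, $\tilde L := C_L^{*}\restriction \cF^n(X)$ maps $\cF^n(X)$ into $\cF^n(Y)$ and satisfies $\tilde L\circ\delta^n_X = \delta^n_Y\circ L$. Uniqueness is automatic since $\tilde L$ is prescribed on the total set $\{\delta_x\}_{x\in X}$. Standard operator-space duality gives $\|\tilde L\|_{\cb}\le \|C_L^{*}\|_{\cb} = \|C_L\|_{\cb}\le \Lip_n(L)$.

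For the matching lower bounds, for arbitrary $[x_{ij}],[y_{ij}]\in \M_n(X)$ the intertwining identity and Proposition \ref{PropDeltaCompIsomIntoFreLipSp} give
\begin{align*}
\|L_n([x_{ij}]) - L_n([y_{ij}])\|_{\M_n(Y)}
&= \bigl\|\tilde L_n\bigl((\delta^n_X)_n([x_{ij}]) - (\delta^n_X)_n([y_{ij}])\bigr)\bigr\|_{\M_n(\cF^n(Y))}\\
&\le \|\tilde L\|_{\cb}\cdot \|[x_{ij}]-[y_{ij}]\|_{\M_n(X)},
\end{align*}
so $\Lip_n(L)\le \|\tilde L\|_{\cb}$. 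Combining with the previous paragraph yields the equalities $\Lip_n(L) = \|\tilde L\|_{\cb} = \|C_L\|_{\cb}$. The only real obstacle is the bookkeeping at the start: one must verify that the norm on $\M_n(\Lip_0^n(Y,\C))$, a priori defined by a further $n$-amplification of a matrix of $\C$-valued Lipschitz functions, genuinely coincides isometrically with that on $\Lip_0^n(Y,\M_n)$. Once this is unwound, the remainder is a direct transcription of the Banach-space template of Godefroy--Kalton.
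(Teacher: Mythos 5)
Your proof is correct, and while it follows the paper's overall skeleton (define the pullback $C_L$, bound $\|C_L\|_{\cb}$ from above by $\Lip_n(L)$, realize $\tilde L$ as $C_L^*\restriction\cF^n(X)$), it closes the lower bound in a genuinely different way. The paper reruns the Hilbert-space compression argument from Proposition \ref{PropDeltaCompIsomIntoFreLipSp}: it picks near-optimal $[x_{ij}],[y_{ij}]$, compresses $\cB(H)$ to a finite-dimensional subspace to produce an element $\iota_k(g)$ of the unit ball of $\M_k(\Lip_0^n(Y,\C))$ via Proposition \ref{PropMapIn}, and tests $C_k$ against it to get $\|C_k\|_k\geq \Lip_n(L)-2\eps$ directly. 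You instead derive $\Lip_n(L)\leq\|\tilde L\|_{\cb}$ from the commuting diagram together with the already-established fact that $(\delta^n_X)_n$ and $(\delta^n_Y)_n$ are isometries, and then chain $\Lip_n(L)\leq\|\tilde L\|_{\cb}\leq\|C_L^*\|_{\cb}=\|C_L\|_{\cb}\leq\Lip_n(L)$ to force all equalities at once. This is cleaner in that it reuses Proposition \ref{PropDeltaCompIsomIntoFreLipSp} as a black box rather than repeating its proof, at the cost of routing the lower bound for $\|C_L\|_{\cb}$ through the duality $\|C_L^*\|_{\cb}=\|C_L\|_{\cb}$ (a standard fact the paper also uses). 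Two minor remarks: the appeal to Smith's lemma and $n$-minimality in your upper bound is superfluous, since the direct estimate $\|[f_{ij}\circ L]\|_{\Lip,n,k}=\Lip\big(([f_{ij}])_n\circ L_n\big)\leq\Lip_n([f_{ij}])\Lip_n(L)$ works verbatim for every $k$ (this is all the paper does); and the ``bookkeeping'' you flag at the end is immediate from the paper's definition of $\|\cdot\|_{\Lip,n,k}$, which literally sets $\|[f_{ij}]\|_{\Lip,n,k}=\Lip_n([f_{ij}]:X\to\M_k)$.
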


\begin{proof}
Let $C:\Lip_0^{n}(Y,\C)\to\Lip_0^{n}(X,\C)$ be the map above, i.e.,   $C(f)=f\circ L$ for all $f\in \Lip_0^{n}(Y,\C)$. Clearly,  $\|C_k\|_{k}\leq \Lip_n(L)$ for all $k\in\N$. In order to get the reverse inequality, we proceed as in Proposition \ref{PropDeltaCompIsomIntoFreLipSp}. Let  $\eps>0$ and pick   $[x_{ij}],[y_{ij}]\in \M_n(X)$ such that \[\Lip_{n}(L)\leq\frac{ \|[L(x_{ij})-L(y_{ij})]\|_{\M_n(Y)}}{ \|[x_{ij}-y_{ij}]\|_{\M_n(X)}}+\eps.\] Fix a Hilbert space $H$ so that $ N\subset \cB(H)$ as an operator space and pick $\bar\xi=(\xi_i)_i,\bar\zeta=(\zeta_i)_i\in H^{\oplus n}$ such that 
\[|\langle[L(x_{ij})-L(y_{ij})]\bar \xi,\bar\zeta\rangle|
\geq \|[L(x_{ij})-L(y_{ij})]\|_{\M_n(Y)}-\eps\|[x_{ij}-y_{ij}]\|_{\M_n(X)}.\]
Let $K=\Span\{\xi_1,\ldots,\xi_n,\zeta_1,\ldots, \zeta_n\}$, let $P_K:H\to K$ be the orthogonal projection, and let $g:\cB(H)\to \cB(K)$ be the map $g(x)=P_Kx\restriction K$. Then $g$ is an $n$-contractive linear map. Without loss of generality, $\cB(K)=\M_k$ for $k=\dim(K)$. So $g$ is an element in the unit ball of $ \M_k(\cB(H)^*)$.

Let $\iota:\cB (H)^*\to \Lip_0^n(Y,\C)$ be the map given by Proposition \ref{PropMapIn}. So $\iota_k(g)$ is in the unit ball of $\M_k( \Lip_0^n(Y,\C))$ and this gives  that 
\begin{align*} \|C_k\|_k  & \geq \|C_k(\iota_k(g))\|_{\M_k}\\
&\geq \frac{\|g([L(x_{ij})-L(y_{ij})])\|_{\M_{nk}}}{\|[x_{ij}-y_{ij}]\|_{\M_n(X)}}\\
&\geq \frac{\|[L(x_{ij})-L(y_{ij})]\|_{\M_{n}(Y)}}{\|[x_{ij}-y_{ij}]\|_{\M_n(X)}}-\eps\\
&\geq \Lip_n(L)-2\eps.
\end{align*}
As $\eps$ was arbitrary, we conclude that $ \|C \|_{\cb}= \Lip_{n}(L)$, and it follows that $C^*: \Lip_0^{n}(X,\C)^*\to\Lip_0^{n}(Y,\C)^*$ also satisfies $\|C^*\|_{\cb}=\Lip_{n}(L)$.   

Given $a_1,\ldots, a_n\in \C$ and $x_1,\ldots, x_n\in X$, we have that \[C^*\Big(\sum_ia_i\delta^{n	}_{x_i}\Big)=\sum_ia_i\delta^{n}_{Lx_i}.\] Hence $\tilde L=C^*\restriction \cF^{n}(X)$ maps $\cF^{n}(X)$ into $\cF^{n}(Y)$ and it is clear that $\delta^n_{N}L=\tilde L\delta^n_{X }$. As $\|C^*\|_{\cb}=\Lip_{n}(L)$, we have that $\|\tilde L^*\|_{\cb}\leq \Lip_{n}(L)$. Moreover,   the proof that $\|C_k\|_k\geq \Lip_n(L)-2\eps$ above also implies that $\|\tilde L_k\|_k\geq \Lip_n(L)-2\eps$. Hence, as $\eps$ was arbitrary, we conclude that $\|\tilde L\|_{\cb}= \Lip_n(L)$.
\end{proof}

\begin{corollary}
Let $(X,x_0)$ and $(Y,y_0)$ be pointed operator metric spaces and let $n\in\N$. Let $L:X\to Y$ be an $n$-Lipschitz  equivalence with $L(x_0)=y_0$. Then $\tilde L$ is a complete isomorphism with complete bounded distorsion at most  $ \mathrm{Dist}_n(L)$, i.e., $\|\tilde L\|_{\cb}\|\tilde L^{-1}\|_{\cb}\leq \mathrm{Dist}_n(L)$. In particular,  if $L$ is a complete isometry,   $\tilde L$ is a complete linear isometry.
\end{corollary}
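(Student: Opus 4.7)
The plan is to apply Lemma \ref{LemmaLinearizationLipMaps} twice --- once to $L$ and once to $L^{-1}$ --- and then use the uniqueness clause of that lemma to show that the two resulting linear maps are mutual inverses. Since $L$ is an $n$-Lipschitz equivalence with $L(x_0)=y_0$, the inverse $L^{-1}:Y\to X$ is also Lipschitz and sends $y_0$ to $x_0$. The lemma thus produces bounded linear maps $\tilde L:\cF^{n}(X)\to\cF^{n}(Y)$ and $\widetilde{L^{-1}}:\cF^{n}(Y)\to\cF^{n}(X)$ with $\|\tilde L\|_{\cb}=\Lip_n(L)$ and $\|\widetilde{L^{-1}}\|_{\cb}=\Lip_n(L^{-1})$, each intertwining the canonical embeddings $\delta^{n}_X$ and $\delta^{n}_Y$.

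The main step will be to verify that $\widetilde{L^{-1}}=\tilde L^{-1}$. To do this, I would form the composition $\widetilde{L^{-1}}\circ\tilde L:\cF^{n}(X)\to\cF^{n}(X)$, a bounded linear map satisfying
\[
\widetilde{L^{-1}}\circ\tilde L\circ\delta^{n}_X=\widetilde{L^{-1}}\circ\delta^{n}_Y\circ L=\delta^{n}_X\circ L^{-1}\circ L=\delta^{n}_X.
\]
The identity map $\mathrm{Id}_{\cF^{n}(X)}$ also satisfies this same intertwining relation as the (unique) lift of $\mathrm{Id}_X$, so the uniqueness clause in Lemma \ref{LemmaLinearizationLipMaps} forces $\widetilde{L^{-1}}\circ\tilde L=\mathrm{Id}_{\cF^{n}(X)}$. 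The symmetric argument gives $\tilde L\circ\widetilde{L^{-1}}=\mathrm{Id}_{\cF^{n}(Y)}$, so $\tilde L$ is a linear bijection with $\tilde L^{-1}=\widetilde{L^{-1}}$. Multiplying the two $\cb$-norms and using that $\mathrm{Dist}(f)=\Lip(f)\Lip(f^{-1})$ for any bi-Lipschitz bijection $f$, applied to $f=L_n$ (whose inverse is $(L^{-1})_n$), yields
\[
\|\tilde L\|_{\cb}\,\|\tilde L^{-1}\|_{\cb}=\Lip_n(L)\,\Lip_n(L^{-1})=\mathrm{Dist}_n(L).
\]

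For the final statement, if $L$ is a complete isometry then in particular $\Lip_n(L)=\Lip_n(L^{-1})=1$, so both $\tilde L$ and $\tilde L^{-1}$ are complete contractions. A one-line argument --- for $v\in\M_k(\cF^{n}(X))$, $\|v\|=\|\tilde L_k^{-1}\tilde L_k(v)\|\leq\|\tilde L_k(v)\|\leq\|v\|$ --- forces equality throughout and shows that $\tilde L$ is a complete linear isometry. I do not anticipate any significant obstacles: all of the work has already been absorbed into Lemma \ref{LemmaLinearizationLipMaps}, and the only additional point to check is the elementary identity $\mathrm{Dist}(L_n)=\Lip(L_n)\Lip(L_n^{-1})$, which is immediate from the definition of Lipschitz distortion.
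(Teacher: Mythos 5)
Your proof is correct and follows essentially the same route as the paper: apply Lemma \ref{LemmaLinearizationLipMaps} to $L^{-1}$, identify the resulting map as $\tilde L^{-1}$, and multiply the two $\cb$-norms, noting $\Lip_n(L)\Lip_n(L^{-1})=\mathrm{Dist}_n(L)$. The only difference is that you justify $\widetilde{L^{-1}}=\tilde L^{-1}$ explicitly via the uniqueness clause of the lemma, where the paper simply asserts it is ``clearly the inverse''; your added detail is a correct elaboration, not a different argument.
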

 
\begin{proof}
Since $L$ is surjective, then the map $\tilde L^{-1}:\cF^{n}(Y)\to \cF^{n}(X)$ given by Lemma \ref{LemmaLinearizationLipMaps} applied to $L^{-1}$ is clearly the inverse of $\tilde{L}$, so $\tilde L$ is surjective. Moreover, by Lemma \ref{LemmaLinearizationLipMaps}, $\|\tilde L\|_{\cb}\|\tilde L^{-1}\|_{\cb}= \Lip_{n}(L)\Lip_{n}(L^{-1})$. So, the result follows.
\end{proof}

Recall, given a metric space $X$ and $A\subset X$, a map $r:X\to A$ is called a \emph{retraction} if $f(a)=a$ for all $a\in A$. 

\begin{corollary}\label{CorSubspaceRetraction}
Let $(X,x_0)$ and $(Y,x_0)$ be  operator metric spaces with  distinguished points and so that $X\subset Y$. If there exists a  retraction $Y\to X$ with $n$-th Lipschitz constant at most $\lambda$, then $ \cF^{n}(X)$ is $\lambda$-completely isomorphic  to 
\[\overline{\Span\{\delta_x\in \cF^{n}(Y)\mid x\in X\}}^{\cF^{n}(Y)}.\]
\end{corollary}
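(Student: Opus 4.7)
The plan is to produce the required isomorphism by linearizing both the inclusion $X \hookrightarrow Y$ and the given retraction via Lemma \ref{LemmaLinearizationLipMaps}, then checking that the two resulting linear maps are mutually inverse on the relevant closed subspace.

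First, I would apply Lemma \ref{LemmaLinearizationLipMaps} to the inclusion $\iota : X \hookrightarrow Y$, which is basepoint-preserving by hypothesis. Since $\M_n(X)\subset \M_n(Y)$ is an isometric inclusion, $\Lip_n(\iota)=1$, and the lemma yields a complete contraction $\tilde\iota : \cF^{n}(X)\to \cF^{n}(Y)$ with $\tilde\iota(\delta_x^n)=\delta_x^n$ for all $x\in X$. Consequently the image of $\tilde\iota$ lies in
\[
Z := \overline{\Span\{\delta_x\in \cF^{n}(Y)\mid x\in X\}}^{\cF^{n}(Y)}.
\]
Applying the same lemma to the retraction $r: Y\to X$, which has $\Lip_n(r)\leq \lambda$ and fixes $x_0$, I obtain $\tilde r : \cF^{n}(Y)\to \cF^{n}(X)$ with $\|\tilde r\|_{\cb}\leq \lambda$ and $\tilde r(\delta_y^n)=\delta_{r(y)}^n$ for every $y\in Y$. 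Since $r$ is the identity on $X$, this gives $\tilde r(\delta_x^n)=\delta_x^n$ for all $x\in X$.

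The rest is essentially formal. Both compositions $\tilde r\circ \tilde\iota : \cF^{n}(X)\to \cF^{n}(X)$ and $\tilde\iota\circ(\tilde r\restriction Z) : Z\to Z$ agree with the identity on the linear span of $\{\delta_x^n : x\in X\}$, which is dense in $\cF^{n}(X)$ and in $Z$ respectively; by continuity they are the identity maps. Hence $\tilde\iota$ restricts to a surjective linear bijection $\cF^{n}(X)\to Z$ whose inverse is $\tilde r\restriction Z$. The bound $\|\tilde\iota\|_{\cb}\cdot \|\tilde r\restriction Z\|_{\cb}\leq 1\cdot \lambda = \lambda$ then gives the claimed $\lambda$-complete isomorphism.

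I do not expect a real obstacle here: once Lemma \ref{LemmaLinearizationLipMaps} is available, this is just the operator-space rendering of the standard Lipschitz-free linearization argument. The only point worth flagging is the shared basepoint hypothesis, which is precisely what makes $r$ basepoint-preserving so that the lemma applies to it; without it, the linearization of $r$ would be shifted by the constant $\delta_{r(x_0)}^n$ and the cancellation $\tilde r\circ \tilde\iota=\mathrm{id}$ would fail.
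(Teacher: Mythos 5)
Your proof is correct and follows essentially the same route as the paper: both arguments hinge on the inclusion-induced map $\delta_x \mapsto \delta_x$ being completely contractive and on Lemma \ref{LemmaLinearizationLipMaps} applied to the retraction supplying the bound $\lambda$. The only presentational difference is that the paper obtains the lower bound for $\iota$ by working with the dual map $f\mapsto f\circ r$ on $\Lip_0^{n}(X,\C)$, whereas you exhibit the inverse explicitly as the restriction of $\tilde r$ to the closed span of $\{\delta_x \mid x\in X\}$; since $\tilde r$ is by construction the restriction of that dual map's adjoint, this is the same argument in predual form.
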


 \begin{proof}
For each  $a_1,\ldots,a_n\in \C$ and $x_1,\ldots, x_n\in X$, define $\iota(\sum_ia_i\delta_{x_i} )=\sum_ia_i\delta_{x_i} $. Clearly, $\|\iota(\sum_ia_i\delta_{x_i} )\|_{\cF^{n}(Y)}\leq \|\sum_ia_i\delta_{x_i} \|_{\cF^{n}(X)}$, so $\iota$ extends to a contractive linear map $\iota:\cF^{n}(X)\to \cF^{n}(Y)$. Moreover, $\iota$ is clearly completely contractive.

Let $r:Y\to X$ be a   retraction with $\Lip_n(r)\leq \lambda$. Let $C:\Lip_0^{n}(X,\C)\to \Lip_0^{n}(Y,\C)$ be given by $C(f)=f\circ r$ for all $f\in \Lip^{n}_0(X,\C)$. By Lemma \ref{LemmaLinearizationLipMaps},  $\|C\|_{\cb}=\Lip_{n}(r)\leq \lambda$. Therefore, 
\[C_k\Big(B_{\Lip_0^{n}(X,M_k)}\Big)\subset \lambda\cdot B_{\Lip_0^{n}(Y,M_k)}\]
for all $k\in\N$.   By the definition of $\cF^{n}(Y)$, it follows that for all $k\in\N$ and all  $[z_{ij}]\in M_k(\cF^{n}(X))$, we have that $\|[\iota_k(z_{ij})]\|\geq \lambda^{-1}\|[z_{ij}]\|$. So, $\|\iota^{-1}_k\|\leq \lambda$ for all $k\in\N$. Hence $\iota$ is a $\lambda$-complete linear isomorphism into $\cF^{n}(Y)$.
\end{proof}

The map $\delta^n_{X}:X\to \cF^{n}(X)$ has a natural completely contractive left inverse if $X$ is an operator space. Indeed, let $X$ be an operator space. Given $a_1,\ldots, a_m\in \C$ and $x_1,\ldots, x_m\in X$, define 
\[\beta_X^n\Big(\sum_ia_i\delta_{x_i} \Big)=\sum_ia_ix_i.\] 
Given $x^*\in X^*$, we have that $x^*\in \Lip^{n}_0(X,\C)$, $\|x^*\|_{X^*}=\|x^*\|_{\Lip_0^{n}(X,\C)}$, and 
\[\Big|x^*\Big(\sum_ia_ix_i\Big)\Big|=\Big|\Big(\sum_ia_i\delta_{x_i} \Big)(x^*)\Big|\leq \Big\|\sum_ia_i\delta_{x_i} \Big\|\|x^*\|.\]
So, $\beta_X^n$ extends to a contractive map $\beta_X^n:\cF^{n}(X)\to X$. It is easy to check that $\beta_X^n$ is actually a complete contraction, hence a complete quotient map, and $\beta_X^n \delta^n_{X}=\mathrm{Id}_{X}$.

\begin{corollary}\label{CorLinearizationLipMapsTargetOS}
Let $(X,x_0)$  be  a pointed  operator metric space  and $Y$ be an operator space. Then, given  $L\in \Lip_0(X,Y)$, there exists a unique bounded linear map   $\bar L:\cF^{n}(X)\to Y$ such that $L=\bar L\delta^n_{X}$ and $\|\bar L\|_{\cb}= \|\bar L_n\|_{n}=\Lip_{n}(L)$.  
\end{corollary}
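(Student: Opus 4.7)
The plan is to combine the lift provided by Lemma \ref{LemmaLinearizationLipMaps} with the canonical complete contraction $\beta_Y^n: \cF^{n}(Y) \to Y$ constructed in the paragraph immediately preceding the corollary. Apply Lemma \ref{LemmaLinearizationLipMaps} to $L$ to obtain a bounded linear map $\tilde L: \cF^{n}(X) \to \cF^{n}(Y)$ satisfying $\tilde L \circ \delta^n_{X} = \delta^n_{Y} \circ L$ and $\|\tilde L\|_{\cb} = \Lip_n(L)$, and then define $\bar L := \beta_Y^n \circ \tilde L$.

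First I would verify the factorization and uniqueness. Since $\beta_Y^n \circ \delta^n_{Y} = \mathrm{Id}_Y$ by construction, we get
\[
\bar L \circ \delta^n_{X} = \beta_Y^n \circ \tilde L \circ \delta^n_{X} = \beta_Y^n \circ \delta^n_{Y} \circ L = L.
\]
Uniqueness of $\bar L$ is immediate from Definition \ref{DefNLipFree}: $\cF^{n}(X)$ is by construction the closed linear span of $\{\delta_x : x \in X\}$, so any two bounded linear maps that agree on $\delta^n_{X}(X)$ must coincide.

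For the norm identity, the upper bound is a one-line submultiplicativity argument:
\[
\|\bar L\|_{\cb} \leq \|\beta_Y^n\|_{\cb}\,\|\tilde L\|_{\cb} \leq 1 \cdot \Lip_n(L).
\]
For the lower bound I would invoke Proposition \ref{PropDeltaCompIsomIntoFreLipSp}, which says that $(\delta^n_{X})_n : \M_n(X) \to \M_n(\cF^{n}(X))$ is an isometry. Combining this with the factorization $L_n = \bar L_n \circ (\delta^n_{X})_n$ and the fact that the Lipschitz constant of a linear map equals its operator norm gives
\[
\Lip_n(L) = \Lip(L_n) \leq \|\bar L_n\|_{n} \cdot \Lip\bigl((\delta^n_{X})_n\bigr) = \|\bar L_n\|_{n} \leq \|\bar L\|_{\cb}.
\]
Sandwiching the chain $\Lip_n(L) \leq \|\bar L_n\|_{n} \leq \|\bar L\|_{\cb} \leq \Lip_n(L)$ yields the stated equalities.

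The whole argument is essentially a diagram chase, so no genuine obstacle arises; the substantive content has already been packaged into Lemma \ref{LemmaLinearizationLipMaps}, Proposition \ref{PropDeltaCompIsomIntoFreLipSp}, and the complete contractivity of the splitting $\beta_Y^n$.
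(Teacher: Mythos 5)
Your proposal is correct and is exactly the argument the paper intends: the paper's proof simply declares the corollary a trivial consequence of Lemma \ref{LemmaLinearizationLipMaps} and the construction of the complete contraction $\beta_Y^n$, i.e.\ the composition $\bar L=\beta_Y^n\circ\tilde L$. Your write-up just makes explicit the uniqueness and the norm sandwich $\Lip_n(L)\leq\|\bar L_n\|_n\leq\|\bar L\|_{\cb}\leq\Lip_n(L)$, all of which are correct.
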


\begin{proof}
This is a trivial consequence of Lemma  \ref{LemmaLinearizationLipMaps} and the discussion preceding the lemma.
\end{proof}


Before stating the next corollary, recall that a linear operator $T : X \to Y$ between normed spaces
is a \emph{1-quotient}, or \emph{metric surjection}, if it is surjective and $\n{w} = \inf\big\{ \n{v} \mid Tv=w \big\}$ for every $w \in Y$.
A  \emph{complete 1-quotient},  or \emph{complete metric surjection}, is a map all of whose amplifications are 1-quotients/metric surjections.
On the other hand, given $C>0$, a Lipschitz map $f : X \to Z$ between metric spaces is called \emph{co-Lipschitz with constant $C$}
if for every $x \in X$ and $r>0$, we have  \[f\big( B(x,r) \big) \supseteq B(f(x),r/C).\]
A map is called a \emph{Lipschitz quotient} if it is surjective, Lipschitz, and co-Lipschitz \cite{BJLPS}. Similarly, we will say that a map between operator metric spaces is an \emph{$n$-Lipschitz quotient} if its $n$-th amplification is a Lipschitz quotient.

\begin{corollary}
Let $(X,x_0)$ and $(Y,y_0)$ be pointed operator metric spaces and let $n\in\N$. Suppose that $L\in \Lip_0(X,Y)$
is an $n$-Lipschitz quotient such that $L_n$ is Lipschitz with constant 1 and co-Lipschitz with constant 1.
Then its linearization $\tilde L:\cF^{n}(X)\to \cF^{n}(Y)$ is a complete 1-quotient.
\end{corollary}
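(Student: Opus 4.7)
The plan is to pass to duals and use the identifications provided by Proposition \ref{prop-duality-for-F^n}. A standard operator space fact asserts that a bounded linear map $T:E\to F$ is a complete $1$-quotient if and only if its adjoint $T^*:F^*\to E^*$ is a complete isometry. Thus it suffices to show that $\tilde L^*$, viewed under the identifications $\cF^n(X)^*\cong \Lip_0^n(X,\C)$ and $\cF^n(Y)^*\cong \Lip_0^n(Y,\C)$, is a complete isometry. Tracing through the construction of $\tilde L$ in Lemma \ref{LemmaLinearizationLipMaps}, this adjoint is precisely the composition map $C:\Lip_0^n(Y,\C)\to\Lip_0^n(X,\C)$ given by $C(f)=f\circ L$.

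That $C$ is a complete contraction is already in Lemma \ref{LemmaLinearizationLipMaps}: $\|C\|_{\cb}=\Lip_n(L)\le 1$ since $L_n$ is assumed $1$-Lipschitz. The substantive step is the reverse complete inequality $\|C_k([f_{ij}])\|_{\Lip,n,k}\ge \|[f_{ij}]\|_{\Lip,n,k}$ for every $k\in\N$ and every $[f_{ij}]\in\M_k(\Lip_0^n(Y,\C))$. Fix such an $[f_{ij}]$ and $\eps>0$, and pick $[y_{ij}],[y'_{ij}]\in\M_n(Y)$ with
\[
\|[f_{ij}]\|_{\Lip,n,k}\le \frac{\|[f_{ij}(y_{ij})-f_{ij}(y'_{ij})]\|_{\M_{nk}}}{\|[y_{ij}-y'_{ij}]\|_{\M_n(Y)}}+\eps.
\]
Here I would use the co-Lipschitz hypothesis on $L_n$. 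Since $L_n$ is surjective, pick any $[x_{ij}]\in L_n^{-1}([y_{ij}])$. Since $L_n$ is co-Lipschitz with constant $1$, for every $r>\|[y_{ij}]-[y'_{ij}]\|_{\M_n(Y)}$ we have $[y'_{ij}]\in L_n(B([x_{ij}],r))$, and thus there exists $[x'_{ij}]\in\M_n(X)$ with $L_n([x'_{ij}])=[y'_{ij}]$ and $\|[x_{ij}-x'_{ij}]\|_{\M_n(X)}\le (1+\eps)\|[y_{ij}-y'_{ij}]\|_{\M_n(Y)}$.

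Combining these, since $f_{ij}(y_{ij})=C(f_{ij})(x_{ij})$ and similarly for the primed terms,
\[
\frac{\|[f_{ij}(y_{ij})-f_{ij}(y'_{ij})]\|_{\M_{nk}}}{\|[y_{ij}-y'_{ij}]\|_{\M_n(Y)}} \le (1+\eps)\,\frac{\|[C(f_{ij})(x_{ij})-C(f_{ij})(x'_{ij})]\|_{\M_{nk}}}{\|[x_{ij}-x'_{ij}]\|_{\M_n(X)}}\le (1+\eps)\|C_k([f_{ij}])\|_{\Lip,n,k}.
\]
Letting $\eps\to 0$ yields $\|[f_{ij}]\|_{\Lip,n,k}\le \|C_k([f_{ij}])\|_{\Lip,n,k}$, so $C$ is a complete isometry and hence $\tilde L$ is a complete $1$-quotient, as desired.

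The main obstacle is the matricial lifting step: one needs the co-Lipschitz-with-constant-$1$ property at the \emph{$n$-th amplification level} in order to produce, for each pair of matrices $[y_{ij}],[y'_{ij}]\in\M_n(Y)$, a corresponding pair of preimages in $\M_n(X)$ whose matrix distance is controlled by the matrix distance of $[y_{ij}]$ and $[y'_{ij}]$. This is exactly what the hypothesis on $L$ provides, and once this matricial lift is in hand the dual argument via Proposition \ref{prop-duality-for-F^n} closes the proof cleanly.
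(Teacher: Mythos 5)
Your proposal is correct and follows essentially the same route as the paper's own proof: both reduce to showing that the adjoint, identified with the composition map $f\mapsto f\circ L$ on $\Lip_0^n(Y,\C)$, is a complete isometry, and both use the co-Lipschitz-with-constant-$1$ hypothesis at the $n$-th matricial level to lift a near-optimal pair $[y_{ij}],[y'_{ij}]\in\M_n(Y)$ to preimages in $\M_n(X)$ with distance inflated by at most $1+\eps$. Your write-up just spells out the ball-inclusion argument for the lift in slightly more detail.
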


\begin{proof} 
It suffices to show that $(\tilde{L})^* : \cF^{n}(Y)^* \to \cF^{n}(X)^*$ is a complete isometry.
Observe that this map can be canonically identified with the map
\[
f\in \Lip_0^{n}(Y,\C)\mapsto f\circ L\in \Lip_0^{n}(X,\C)
\]
in Lemma \ref{LemmaLinearizationLipMaps} and, by this lemma, we have $\| \tilde L \|_{\cb} = 1$.
Given $[f_{k\ell }]\in \M_m(\Lip_0^n (M,\C))$ and $\eps>0$, pick   $[x_{ij}],[y_{ij}]\in \M_n(Y)$ such that 
\[
\|[f_{k\ell }]\|_{\Lip,n,m} \|[x_{ij}-y_{ij}]\|_{\M_n(Y)} \leq (1+\eps)\|[f_{k\ell}(x_{ij})-f_{k\ell}(y_{ij})]\|_{\M_{nm}}.
\]
By the co-Lipschitz with constant 1 condition, we can find $[z_{ij}],[w_{ij}]\in \M_n(X)$ such that $L(z_{ij}) = x_{ij}, L(w_{ij}) = y_{ij}$ for $1 \le i,j, \le n$ and
\[
\n{[z_{ij} - w_{ij}]}_{\M_n(X)} \le  (1+\varepsilon)\n{[x_{ij} - y_{ij}]}_{\M_n(Y)}.
\]
It follows that
\[
\|[f_{k\ell } \circ L]\|_{\Lip,n,m} \ge (1+\eps)^{-2} \|[f_{k\ell } ]\|_{\Lip,n,m}.\qedhere
\]
\end{proof}

\subsection{Examples of $n$-Lipschitz-free operator spaces}\label{SubsectionExamples}

Given a connected undirected graph, we consider it (more precisely, its vertex set) as a metric space by endowing it with the shortest path distance. Recall that if such a graph has no cycles it is called a \emph{tree}, and it is well-known that the Lipschitz-free space of a tree on $k+1$ vertices can be isometrically identified with $\ell_1^k$ as Banach spaces \cite[Example 10.12]{OstrovskiiBook2013}.
The following proposition shows an example of an operator metric space where this identification is now at the level of operator spaces.

\begin{proposition}
Let $(T,x_0)$ be a rooted tree with $k+1$ vertices, say $T=\{0,1,\ldots, k\}$ where $x_0=0$. Consider the isometry $T \to \ell_1^k$ given  by
\[
0\in T \mapsto 0 \in \ell_1^k \  \text{ and } \ j \in T\setminus\{0\}\mapsto \sum_{0  \prec i \preccurlyeq j} e_i \in \ell_1^k,
\]
where $\prec$ is the partial order induced by the tree, 
and consider $T$ as an operator metric space with the structure induced   by this isometry and the maximal operator space structure on $\ell_1^k$.
Then for any $n\in\N$,  $\cF^n({T})$ is completely isometric to $\MAX(\ell_1^k)$.
\end{proposition}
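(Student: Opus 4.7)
The plan is to construct an operator-space isomorphism $\bar\iota:\cF^n(T)\to\MAX(\ell_1^k)$ as the linearization of the inclusion $\iota:T\hookrightarrow\MAX(\ell_1^k)$, and to verify that both $\bar\iota$ and $\bar\iota^{-1}$ are completely contractive by combining Corollary \ref{CorLinearizationLipMapsTargetOS} with the universal property of $\MAX$.

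First, since the operator metric structure on $T$ is by definition inherited from $\MAX(\ell_1^k)$, the inclusion $\iota$ is a complete isometry and so $\Lip_n(\iota)=1$. Applying Corollary \ref{CorLinearizationLipMapsTargetOS}, I obtain a linear map $\bar\iota:\cF^n(T)\to\MAX(\ell_1^k)$ satisfying $\bar\iota\circ\delta_T^n=\iota$ and $\|\bar\iota\|_{\cb}=1$. This handles one half of the desired complete isometry.

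The heart of the matter is to show that $\bar\iota$ is in fact a Banach-space isometry. At the scalar level I would invoke the classical identification (cited just before the proposition) of the Lipschitz-free space of a finite tree on $k+1$ vertices with $\ell_1^k$. On the generating set of $\cF(T)$ this classical isometry sends $\delta_j$ to $\sum_{0\prec i\preccurlyeq j}e_i=\iota(j)$, so it agrees with $\bar\iota$ on $\spa\{\delta_x:x\in T\}$; hence $\|\bar\iota(v)\|_{\ell_1^k}=\|v\|_{\cF(T)}$ for every $v$. Now by Proposition \ref{PropIsoLipFreeAndNLipFree} we have $\cF^n(T)=\cF(T)$ as vector spaces, and the inequality $\|\cdot\|_{\Lip,n}\geq\|\cdot\|_{\Lip}$ dualizes to $\|v\|_{\cF^n(T)}\leq\|v\|_{\cF(T)}$. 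Combined with $\|\bar\iota(v)\|_{\ell_1^k}\leq\|v\|_{\cF^n(T)}$ (a consequence of $\|\bar\iota\|\leq\|\bar\iota\|_{\cb}=1$), these inequalities collapse to equality, so $\bar\iota$ is a Banach isometry; surjectivity is immediate because $\iota(T)$ spans $\ell_1^k$.

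Finally, the inverse $\bar\iota^{-1}:\ell_1^k\to\cF^n(T)$ is a norm-one linear map at the Banach level, and the universal property of $\MAX$ recalled in Subsection \ref{SubsectionBasics} gives $\|\bar\iota^{-1}:\MAX(\ell_1^k)\to\cF^n(T)\|_{\cb}=\|\bar\iota^{-1}\|=1$. Thus $\bar\iota$ and its inverse are both complete contractions, yielding the claimed complete isometry. I do not anticipate any serious obstacle: the only external input is the classical scalar-level identification $\cF(T)\cong\ell_1^k$, and the rest is essentially a triangular squeeze using the machinery already developed.
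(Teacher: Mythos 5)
Your proof is correct and follows essentially the same route as the paper: complete contractivity of the linearization via Corollary \ref{CorLinearizationLipMapsTargetOS}, the classical identification $\cF^1(T)\cong\ell_1^k$, the contractivity of $\mathrm{Id}:\cF^1(T)\to\cF^n(T)$ coming from $\Lip\le\Lip_n$, and maximality of $\MAX(\ell_1^k)$ to upgrade the norm bound on the inverse to a cb-bound. Your ``triangular squeeze'' is just a repackaging of the paper's factorization of $u^{-1}:\ell_1^k\to\cF^n(T)$ through $\cF^1(T)$.
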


\begin{proof}
Let $u : \cF^n({T}) \to \ell_1^k$ be the linear operator that for each $j\in \{1,2,\dotsc,k\}$ sends $\delta_j$ to $\sum_{0  \prec i \preccurlyeq j} e_i$. It is clear from Corollary \ref{CorLinearizationLipMapsTargetOS} that $u : \cF^n({T}) \to \MAX(\ell_1^k)$ is completely contractive, since it is the linearization of a complete isometry ${T} \to \MAX(\ell_1^k)$.

Since $\MAX(\ell_1^k)$ is a maximal space,
\[
\big\| u^{-1} : \MAX(\ell_1^k) \to \cF^n({T}) \big\|_{\cb} = \big\| u^{-1} : \ell_1^k \to \cF^n({T}) \big\|,
\]
and now we estimate
\[
\big\| u^{-1} : \ell_1^k \to \cF^n({T}) \big\| \le \big\| u^{-1} : \ell_1^k \to \cF^1({T}) \big\| \cdot \big\| \mathrm{Id} : \cF^1({T}) \to \cF^n({T}) \big\|. 
\]
Note that $\big\| u^{-1} : \ell_1^k \to \cF^1({T}) \big\| = 1$ follows from the classical identification of $\cF^1({T})$ with $\ell_1^k$. Also, as $\Lip(f)\leq \Lip_n(f)$ (see \eqref{Eq.LipLipnEquiv}), we have that the idendity $\Lip_0^n(T)\to \Lip_0(T)$ has norm at most $1$, which in turn implies that  $\big\| \mathrm{Id} : \cF^1({T}) \to \cF^n({T}) \big\| \le 1$ (alternatively, this will   also be a consequence of   Theorem \ref{Thm-Alternative-N-Lip-Free} below).
\end{proof}

The previous proposition is a particular case of the following more general situation, whose proof is exactly the same.

\begin{proposition}\label{PropMaximalnLipSpace}
Let $(X,x_0)$ be a pointed metric space. Consider the maximal operator space structure $\MAX(\cF(X))$ on $\cF(X)$,
and induce an operator metric space structure on $X$ by the canonical embedding $\delta_X : X \to \cF(X)$. 
Then for each  $n\in\N$, the $n$-Lipschitz free space $\cF^n({X})$ is completely isometric to $\MAX(\cF(X))$. 
\end{proposition}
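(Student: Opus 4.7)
The plan is to mimic the proof of the previous proposition verbatim, with $\cF(X)$ and $X$ playing the roles of $\ell_1^k$ and the tree $T$ respectively. First, I will define $u : \cF^n(X) \to \MAX(\cF(X))$ as the linearization, guaranteed by Corollary \ref{CorLinearizationLipMapsTargetOS}, of the canonical embedding $\delta_X : X \to \MAX(\cF(X))$. Since $X$ is by hypothesis given the operator metric structure induced by this embedding, $\delta_X$ is a complete isometry, so $\Lip_n(\delta_X) = 1$, and Corollary \ref{CorLinearizationLipMapsTargetOS} yields $\|u\|_{\cb} \le 1$.

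For the reverse inequality I will use the maximality of $\MAX(\cF(X))$, which by the defining property recalled in Subsection \ref{SubsectionBasics} gives
\[
\big\|u^{-1} : \MAX(\cF(X)) \to \cF^n(X)\big\|_{\cb} = \big\|u^{-1} : \cF(X) \to \cF^n(X)\big\|.
\]
Since $u$ is the identity on the generating set $\{\delta_x : x \in X\}$, the right-hand side is simply the norm of the Banach-space identity $\cF(X) \to \cF^n(X)$, which I will bound via
\[
\big\|\mathrm{Id} : \cF(X) \to \cF^n(X)\big\| \le \big\|\mathrm{Id} : \cF(X) \to \cF^1(X)\big\| \cdot \big\|\mathrm{Id} : \cF^1(X) \to \cF^n(X)\big\|.
\]
The first factor equals $1$ because $\cF(X)$ and $\cF^1(X)$ coincide as Banach spaces (both are $\overline{\Span}\{\delta_x\} \subset \Lip_0(X,\C)^*$ with the dual norm). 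The second factor is at most $1$ by the inequality $\Lip(f) \le \Lip_n(f)$ from \eqref{Eq.LipLipnEquiv}: this implies that the identity $\Lip_0^n(X,\C) \to \Lip_0(X,\C)$ is a contraction, and dualizing and restricting to $\cF^1(X) \subset \Lip_0(X,\C)^*$ yields the desired contractive identity $\cF^1(X) \to \cF^n(X)$.

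Combining these estimates shows that $u$ is a complete isometry from $\cF^n(X)$ onto $\MAX(\cF(X))$. There is no substantial new obstacle beyond what already appears in the previous proposition; the heavy lifting is done by Corollary \ref{CorLinearizationLipMapsTargetOS} and the universal property of $\MAX$. The only conceptual point requiring care is to observe that, because the operator metric structure on $X$ is pulled back from $\MAX(\cF(X))$ via $\delta_X$, this map is tautologically completely isometric, which is precisely what powers the application of the linearization corollary.
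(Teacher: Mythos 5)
Your proposal is correct and follows essentially the same route as the paper, which itself states that the proof of this proposition is ``exactly the same'' as that of the preceding tree example: linearize the complete isometry $\delta_X : X \to \MAX(\cF(X))$ via Corollary \ref{CorLinearizationLipMapsTargetOS} to get complete contractivity of $u$, then use the universal property of $\MAX$ together with the factorization through $\cF^1(X)$ and the inequality $\Lip(f)\leq\Lip_n(f)$ for the inverse. No gaps.
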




\begin{remark}
Suppose that $(X,x_0)$ is a pointed metric space corresponding to a finite graph endowed with the shortest path distance.
It is known \cite[Prop. 10.10]{OstrovskiiBook2013} that in this case $\cF(X)$ can be identified (as a Banach space) with $\ell_1(\mathcal{E})/Z$, where $\mathcal{E}$ is the edge set of the graph and $Z$ is the \emph{cycle subspace} (see \cite{OstrovskiiBook2013} for the detailed definition, and note that though the proof of \cite[Prop. 10.10]{OstrovskiiBook2013} is given for real scalars the same argument works in the complex case).
Since quotients of maximal spaces are maximal, endowing $X$ with an operator metric space structure as in Proposition \ref{PropMaximalnLipSpace} yields that for all $n\in\N$ the $n$-Lipschitz free space $\cF^n(X)$ is completely isometric to $\MAX(\ell_1(\mathcal{E}))/Z$.
\end{remark}


\subsection{$n$-Lipschitz-free $\R$-operator spaces}\label{SubsectionRealLipFree}
In Banach space theory, $\cF(X)$ is usually defined with respect to real-valued Lipschitz maps $X\to \R$ with $f(x_0)=0$; in particular, it is an $\R$-Banach space. We discuss this approach in this subsection (we refer the reader to \cite{Ruan-real-OS}  for details on $\R$-operator spaces).

 Proceeding analogously as we did above for $\Lip_0(X,\C)$,  we denote  by $\Lip_0^n(X,\R)$ the $\R$-operator space which consists of $\Lip_0(X,\R)$ endowed with the real operator space structure  $(\|\cdot\|_{\Lip,n,k})_k$, where the sequence of norms $(\|\cdot\|_{\Lip,n,k})_k$ is defined completely analogously as above. Given $x\in X$, in an abuse of notation we denote by $\delta_x$ the map $\Lip_0(X,\R)\to \R$ given by $\delta_x(f)=f(x)$ for all $f\in \Lip_0(X,\R)$. The \emph{ $n$-Lipschitz-free $\R$-operator space of $X$} is the $\R$-Banach space 
\[\cF^n_{\R}(X)=\overline{\Span_\R\{\delta_X\in \Lip_0^n(X,\R)^*\mid x\in X \}}\]
together with the operator space structure inherited from $\Lip_0^n(X,\R)^*$. If $n=1$, we write $\cF_\R(X)=\cF^1_\R(X)$.  

All the results in Section \ref{SectionLipFreeSp} have analogous versions for the  $n$-Lipschitz-free $\R$-operator spaces and their proofs follow   analogously as well (this will be used in Section \ref{SectionLiftProp}).

When working in the $\R$-Banach space category, an advantage of working with $\cF_\R(X)$  is that a real-valued Lipschitz map can always be extended without increasing its Lipschitz constant (see \cite[Section 2]{Godefroy2015Survey}). For complex-valued Lipschitz maps this is no longer the case (\cite[Example 1.37]{Weaver1999BookSecondEdition}) --- the Lipschitz constant may increase by a factor of at most $\sqrt{2}$ (\cite[Corollary 1.34]{Weaver1999BookSecondEdition}). As a result of that, if $Y\subset X$, then $\cF(Y)$ is $\sqrt{2}$-isomorphic to a subspace of $\cF(X)$, while $\cF_{\R}(Y)$ is linearly isometric to a subspace of $\cF_{\R}(X)$. However, in the operator space category, it is not clear if this advantage remains. Precisely:

\begin{problem}
Let $X$ and $Y$ be operator metric spaces with $X\subset Y$ and let $n\in\N$. Let $f:X\to \R$ be a Lipschitz map. Is there an extension $F:Y\to \R$ of $f$ so that $\|F\|_{\Lip, n}\leq \|f\|_{\Lip,n}$?
\end{problem}

  At last, we finish this subsection relating  the $n$-Lipschitz-free $\R$-operator space with its complex version defined before. Its proof is completely   straightforward, so we leave the details to the reader. 

\begin{proposition}\label{PropRealVSComplexLipFreeSp}
Let $X$ be a $\C$-operator space. Then $\cF^n(X)$ is $\R$-isomorphic to $\cF_\R^n(X)\oplus \cF_\R^n(X)$ for all $n\in\N$.\qed 
\end{proposition}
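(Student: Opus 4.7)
My plan is to obtain the $\R$-isomorphism as the pre-adjoint of the natural real/imaginary decomposition of complex-valued Lipschitz functions.

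First I would identify both sides with Lipschitz-function spaces. By Proposition \ref{prop-duality-for-F^n} and its real analog (same proof with $\R$-scalars), $\cF^n(X)^* \cong \Lip_0^n(X,\C)$ and $\cF_\R^n(X)^* \cong \Lip_0^n(X,\R)$. Viewed as an $\R$-Banach space, $\cF^n(X)$ has real dual canonically $\R$-linearly isometric to $\Lip_0^n(X,\C)$ (as an $\R$-Banach space) via $\varphi\mapsto\varphi-i\varphi(i\,\cdot)$. Under these identifications, it suffices to exhibit an $\R$-linear $w^*$-$w^*$ homeomorphism
\[
\Phi : \Lip_0^n(X,\C) \to \Lip_0^n(X,\R) \oplus \Lip_0^n(X,\R),
\]
for which I would take $\Phi(f)=(\Rea f,\Ima f)$ with inverse $(u,v)\mapsto u+iv$.

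Next, I would verify norm control at every matricial level. The elementary inequality $\max(\|A\|,\|B\|)\le\|A+iB\|\le\|A\|+\|B\|$, valid for any complex matrix $A+iB$ with real $A,B$, applied to the increments of the $n$-th amplification of an arbitrary $[f_{ij}]:X\to\M_k(\C)$, yields
\[
\max\bigl(\|\Rea[f_{ij}]\|_{\Lip,n,k},\|\Ima[f_{ij}]\|_{\Lip,n,k}\bigr)\le\|[f_{ij}]\|_{\Lip,n,k}\le\|\Rea[f_{ij}]\|_{\Lip,n,k}+\|\Ima[f_{ij}]\|_{\Lip,n,k}
\]
for every $k\in\N$. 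Hence $\Phi$ is an $\R$-linear Banach-space isomorphism of distortion at most $2$. The $w^*$-continuity of $\Phi$ and $\Phi^{-1}$ then follows from the ``moreover'' clause of Proposition \ref{prop-duality-for-F^n} (and its real version), which identifies the $w^*$-topology on the unit balls of both duals with pointwise convergence on $X$, together with the fact that a net of complex-valued functions converges pointwise if and only if its real and imaginary parts do. Taking pre-adjoints produces the required $\R$-linear Banach-space isomorphism $\cF_\R^n(X)\oplus\cF_\R^n(X)\to\cF^n(X)$.

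The most delicate point is the descent to preduals through $w^*$-continuity, but it is routine once the $w^*$-topologies have been identified with pointwise convergence. If one wishes to avoid duality manipulation altogether, an alternative is to apply Corollary \ref{CorLinearizationLipMapsTargetOS} (adapted to real operator spaces) to lift the real-Lipschitz maps $x\mapsto\delta_x$ and $x\mapsto i\delta_x$ from $X$ into $\cF^n(X)_\R$ to real-linear maps $T_1,T_2:\cF_\R^n(X)\to\cF^n(X)_\R$, and then directly verify that $(u,v)\mapsto T_1(u)+T_2(v)$ is the desired isomorphism; surjectivity is clear from density of the span of $\{\delta_x,i\delta_x\}$, and injectivity can again be read off from the duality.
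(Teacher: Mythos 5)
The paper offers no proof of this proposition (it is explicitly left to the reader), so there is nothing to compare against; judged on its own terms, your main argument is correct. The chain real dual of $\cF^n(X)$ $\cong$ complex dual $\cong\Lip_0^n(X,\C)$, followed by $f\mapsto(\Rea f,\Ima f)$ and descent to preduals, does produce the desired $\R$-isomorphism, and the matricial inequality you invoke is valid (both bounds follow from $\Rea M=\tfrac12(M+\overline{M})$ and the fact that entrywise conjugation preserves the operator norm) --- though since the statement only asserts a Banach-space isomorphism, the case $k=1$ of that inequality already suffices. Two points deserve to be made explicit. First, passing from $w^*$-continuity on the unit ball (where Proposition \ref{prop-duality-for-F^n} identifies the $w^*$-topology with pointwise convergence) to global $w^*$-continuity, and hence to the existence of a pre-adjoint, uses the Krein--\v{S}mulian/Banach--Dieudonn\'e theorem; this is standard but is the actual engine of the ``descent to preduals'' and should be cited. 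Second, your alternative sketch at the end is not complete as stated: boundedness of $(u,v)\mapsto T_1(u)+T_2(v)$ together with dense range and injectivity does not yield surjectivity or an isomorphism --- one still needs the map to be bounded below, which in effect sends you back to the duality computation (its adjoint is your $\Phi$), so the ``alternative'' is really the same proof. None of this affects the validity of your primary argument.
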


\section{$n$-Lipschitz-lifting property}\label{SectionLiftProp}

In \cite{GodefroyKalton2003}, the authors introduced  the isometric Lipschitz-lifting property for Banach spaces, and showed that every separable Banach space satisfies this property. In this subsection, we introduce the equivalent definition and prove the equivalent statement for operator spaces. This will allow us to obtain Theorem \ref{ThmCompIsomEmbImpliesCompLinIsomEmb}.

\begin{definition}(cf.~\cite[Definition 2.7]{GodefroyKalton2003})
Let $X$ be an operator space and let $n\in\N$. We say that $X$ has the \emph{$n$-isometric Lipschitz-lifting property} if there exists
a linear $n$-contraction $T:X\to \cF^{n}(X)$ such that $\beta_X^n T=\mathrm{Id}_{X}$.\label{DefiLipLiftProp}
\end{definition}

Notice that we always have  $\beta_X^n\circ \delta_X^n=\mathrm{Id}_X$. So the content of the definition above lies on $T$ being linear. In order to show that   separable operator spaces have the completely isometric Lipschitz-lifting property, we will need to work with Gateaux differentiability.

\begin{definition}(cf.~\cite[Definition 14.2.1]{AlbiacKaltonBook})\label{DefinitionGateauxDiff}
Let $X$ and $Y$ be  $\R$-Banach spaces.
 A map   $f:X\to Y$  is \emph{ Gateaux $\R$-differentiable at $x\in X$} if for all $a\in X$ the limit 
\[Df_x(a)=\lim_{\lambda\to 0}\frac{f(x+\lambda a)-f(x)}{\lambda}\]
exists and the map $a\in X\mapsto Df_x(a)\in Y$ is $\R$-linear and   bounded.
If $f:X\to Y$ is Gateaux $\R$-differentiable at every $x\in X$, we simply say that $f$ is \emph{ Gateaux $\R$-differentiable}.
If $X$ and $Y$ are $\C$-Banach spaces, we say that $f:X\to Y$ is  \emph{Gateaux $\R$-differentiable at $x\in X$} if it is so with $X$ and $Y$ being seen as  $\R$-Banach spaces.
\end{definition}

Since operator spaces are, in particular, Banach spaces, the notion of Gateaux $\R$-differentiability applies to operator spaces. The following simple proposition shows how  operator norms of   derivatives relate to their Lipschitz operator norms. 

 \begin{proposition}\label{PropComplRadeThm1}
Let $X$  and  $Y$ be    operator spaces, $n,k\in\N$, and $[f_{\ell m}]\in \M_k(\Lip_0^n(X,\C))$ be   Gateaux $\R$-differentiable at $x\in X$.  Then   \[\|D([f_{\ell m}])_x\|_n\leq \|[f_{\ell m}]\|_{\Lip,n,k}.\]
\end{proposition}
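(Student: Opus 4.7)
Write $F=[f_{\ell m}]\colon X\to\M_k$, so that $F_n\colon \M_n(X)\to \M_n(\M_k)\cong \M_{nk}$ is just entry-wise evaluation of $F$. The plan is to realize the $n$-th amplification of $DF_x$ as a pointwise limit of difference quotients of $F_n$, and then pass the Lipschitz estimate on $F_n$ to the limit. The key trick is to evaluate $F_n$ at matrices of the form ``$x$ in every entry,'' so that the directional derivative is accessed simultaneously in all entries.

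Fix $[a_{ij}]\in \M_n(X)$ and, for $\lambda>0$, set
\[
M_0=[x]_{i,j}\in \M_n(X),\qquad M_\lambda=M_0+\lambda[a_{ij}]=[x+\lambda a_{ij}]_{i,j}\in \M_n(X).
\]
Then $\|M_\lambda-M_0\|_{\M_n(X)}=\lambda\|[a_{ij}]\|_{\M_n(X)}$, and since $F_n$ is Lipschitz with constant $\|[f_{\ell m}]\|_{\Lip,n,k}$,
\[
\Bigl\|\tfrac{1}{\lambda}\bigl(F_n(M_\lambda)-F_n(M_0)\bigr)\Bigr\|_{\M_{nk}}\leq \|[f_{\ell m}]\|_{\Lip,n,k}\,\|[a_{ij}]\|_{\M_n(X)}.
\]

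Next I would identify the pointwise limit of this difference quotient. The $(i,j)$-block of $\frac{1}{\lambda}(F_n(M_\lambda)-F_n(M_0))$ in $\M_k$ is exactly $\frac{1}{\lambda}\bigl(F(x+\lambda a_{ij})-F(x)\bigr)$, which by Gateaux $\R$-differentiability of $F$ at $x$ converges to $DF_x(a_{ij})$ as $\lambda\to 0$. Thus, block by block,
\[
\tfrac{1}{\lambda}\bigl(F_n(M_\lambda)-F_n(M_0)\bigr)\xrightarrow[\lambda\to 0]{}\bigl[DF_x(a_{ij})\bigr]_{i,j}=(DF_x)_n([a_{ij}]).
\]
Since $\M_{nk}$ is finite-dimensional, this entry-wise (equivalently, block-wise) convergence is automatically convergence in the $\M_{nk}$ operator norm, so the Lipschitz inequality above passes to the limit and yields
\[
\|(DF_x)_n([a_{ij}])\|_{\M_{nk}}\leq \|[f_{\ell m}]\|_{\Lip,n,k}\,\|[a_{ij}]\|_{\M_n(X)}.
\]
Taking the supremum over $[a_{ij}]$ in the unit ball of $\M_n(X)$ gives $\|DF_x\|_n\leq \|[f_{\ell m}]\|_{\Lip,n,k}$, which is the claim.

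There is no real obstacle here; the only point that requires care is choosing the base-point matrix $M_0$ so that all $n^2$ off-diagonal differences also take the form $F(x+\lambda a_{ij})-F(x)$ (rather than, e.g., $F(\lambda a_{ij})-F(0)$, which would force one to assume differentiability at $0$ as well). Using the constant matrix $M_0=[x]_{i,j}$ bypasses this issue entirely, while costing nothing in the Lipschitz estimate.
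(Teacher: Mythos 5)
Your proof is correct and is essentially the same as the paper's: both evaluate the amplification $F_n$ at the constant matrix $[x]_{i,j}$ and at $[x+\lambda a_{ij}]$, bound the difference quotient by $\Lip_n(F)\,\|[a_{ij}]\|_{\M_n(X)}$, and pass to the limit using blockwise Gateaux differentiability. The point you flag about choosing the base point $M_0=[x]_{i,j}$ is exactly the (implicit) device in the paper's computation, so there is nothing genuinely different here.
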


\begin{proof}
Given $[a_{ij}]\in \M_n(X)$,    we have
\begin{align*}
\|D([f_{\ell m}])_x([a_{ij}])\|_{\M_n(Y)} &= \lim_{\lambda\to 0}\Big\|\Big[  \frac{f_{\ell m }(x+\lambda a_{ij})-f_{\ell m }(x)}{\lambda}\Big]\Big\| _{n k}\\
&= \lim_{\lambda\to 0}\frac{1}{\lambda}\cdot \Big\|  [f_{\ell  m}]_n([x+\lambda a_{ij}])-[f_{\ell m}]_n([x])\Big\|_{n k}\\
&\leq \Lip_n([f_{\ell m}]) \|[a_{ij}]\|_{\M_n(X)}
\end{align*}
 So,   $\|D([f_{\ell m}])_x\|_n\leq\Lip_n([f_{\ell m}])= \|[f_{\ell m}]\|_{\Lip,n,k}$.
\end{proof}

Before we prove that separable operator spaces have the completely isometric Lipschitz-lifting property, we need a proposition about the density of the set of Gateaux differentiable functions in $\Lip_0^{n}(X,\C)$. The next proposition is the operator space version of \cite[Corollary 6.43]{BenyaminiLindenstraussBook}.

\begin{proposition}\label{PropDensityGateauxDiff}
Let $X $ be a separable operator space. Then, for all $n,k\in\N$, all $[f_{\ell m}]\in \M_k(\Lip_0 ^n(X,\C))$,  and all $\eps>0$, there is a Gateaux $\R$-differentiable  $[g_{\ell m}]\in \M_k(\Lip_0^n (X,\C))$ such that $\|[g_{\ell m}]\|_{\Lip,n,k}\leq \|[f_{\ell m}]\|_{\Lip,n,k}$ and 
\[\sup_{[x_{ij}]\in \M_n(X)}\|[f_{\ell m}(x_{ij})-g_{\ell m}(x_{ij})]\|_{\M_{nk}}\leq \eps.\] 
\end{proposition}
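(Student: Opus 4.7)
The plan is to adapt the classical proof of \cite[Corollary 6.43]{BenyaminiLindenstraussBook} (which handles the scalar case on a separable Banach space) by convolving \emph{all} entries of the matrix $[f_{\ell m}]$ with a single common probability measure on $X$. Using the same measure for every entry simultaneously is what preserves the operator space structure: convolution then commutes with the assembly of the matrix, so the matricial Lipschitz norm does not increase, while the entrywise uniform approximation and entrywise Gateaux $\R$-differentiability obtained from the scalar theory transfer without loss.

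First I would invoke, for any prescribed $\eta>0$, the standard existence of a Borel probability measure $\mu$ on the separable Banach space $X$ (a suitably scaled non-degenerate Gaussian measure, as constructed in \cite[Chapter 6]{BenyaminiLindenstraussBook}) satisfying:
\begin{enumerate}
\item[(i)] $\int_X \|y\|_X \, d\mu(y) \leq \eta$;
\item[(ii)] for every $h \in \Lip_0(X,\C)$, the convolution $(h\ast\mu)(x) := \int_X h(x+y)\, d\mu(y)$ is Gateaux $\R$-differentiable on $X$.
\end{enumerate}
Property (ii) for $\C$-valued $h$ is immediate from the $\R$-valued version applied separately to the real and imaginary parts, using the same $\mu$. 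I then define
\[
g_{\ell m}(x) := \int_X f_{\ell m}(x+y)\, d\mu(y) - \int_X f_{\ell m}(y)\, d\mu(y),
\]
so that each $g_{\ell m}$ is Gateaux $\R$-differentiable and $g_{\ell m}(0)=0$, giving $[g_{\ell m}] \in \M_k(\Lip_0^n(X,\C))$.

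The key matrix Lipschitz estimate is then verified directly: for any $[x_{ij}],[y_{ij}] \in \M_n(X)$, one writes
\[
[g_{\ell m}(x_{ij}) - g_{\ell m}(y_{ij})] = \int_X \bigl([f_{\ell m}(x_{ij}+z)] - [f_{\ell m}(y_{ij}+z)]\bigr)\, d\mu(z) \in \M_{nk},
\]
applies the Bochner integral triangle inequality, and uses the shift-invariance
$\|[x_{ij}+z]-[y_{ij}+z]\|_{\M_n(X)} = \|[x_{ij}]-[y_{ij}]\|_{\M_n(X)}$
(which follows from cancellation of the constant shift) to conclude
$\|[g_{\ell m}]\|_{\Lip,n,k} \leq \|[f_{\ell m}]\|_{\Lip,n,k}$. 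Uniform closeness is entrywise: a direct estimate gives $|f_{\ell m}(x)-g_{\ell m}(x)| \leq 2\eta\,\Lip(f_{\ell m})$ (using $f_{\ell m}(0)=0$), and hence $\|[f_{\ell m}(x_{ij})-g_{\ell m}(x_{ij})]\|_{\M_{nk}}$ is controlled by $2nk\,\eta\,\|[f_{\ell m}]\|_{\Lip,n,k}$, which can be made smaller than $\eps$ by taking $\eta$ sufficiently small.

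The only nontrivial ingredient is property (ii), i.e.\ the existence of a probability measure whose convolution yields Gateaux $\R$-differentiable functions; this is exactly the technical heart of the scalar density theorem and is borrowed directly from the classical Gaussian measure construction in \cite[Chapter 6]{BenyaminiLindenstraussBook}. Once $\mu$ is fixed, the passage from scalars to matrix-valued functions is automatic since convolution against a single measure is a linear operation that interacts entrywise with the operator space structure on $\M_k(\Lip_0^n(X,\C))$.
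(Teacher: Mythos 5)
Your proposal is correct and follows essentially the same route as the paper: convolve the matrix entrywise against a single (scaled) Gaussian measure on $X$, use the Bochner triangle inequality plus shift-invariance to see that the matricial Lipschitz norm does not increase, bound the uniform error by the first moment of the measure, and invoke the classical differentiability-of-convolutions result from \cite[Chapter 6]{BenyaminiLindenstraussBook}. The only cosmetic differences are that you scale the measure rather than the argument (the paper uses $f_{\ell m}(x+N^{-1}z)$) and that you subtract the constant $\int f_{\ell m}\,d\mu$ to restore the normalization $g_{\ell m}(0)=0$, which is a harmless (indeed slightly tidier) adjustment.
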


\begin{proof}
Since $X$ is a separable Banach space, there exists a  nondegenerate Gaussian measure $\mu$ on $X$ so that $\int_{X}\|z\|d\mu(z)\leq 1$.\footnote{See \cite[Definition 6.17]{BenyaminiLindenstraussBook} for the definition of a Gaussian measure on a Banach space and the comments after \cite[Proposition 6.20]{BenyaminiLindenstraussBook} for a proof of this statement.} Fix $n,k\in\N$ and $[f_{\ell m}]\in \M_k(\Lip_0^{n}(X,\C))$. For each $\ell,m\in \{1,\ldots, k\}$ and $N\in\N$, define $f^N_{\ell m}:X\to \C$ by letting \[f^N_{\ell m}(x)=\int_{X}f_{\ell m}(x+N^{-1}z)d\mu(z).\] Given  $[x_{ij}],[y_{ij}]\in \M_n(X)$, we have that 
\begin{align*}
\|[f^N_{\ell m}(x_{ij})-&f^N_{\ell m}(y_{ij})]\|_{\M_{nk}}\\
&=\Big\|\int_{X}[f_{\ell m}(x_{ij}+N^{-1}z)-f_{\ell m}(y_{ij}+N^{-1}z)]d\mu(z)\Big\|_{\M_{nk}}\\
&\leq \int_{X} \|[f_{\ell m}(x_{ij}+N^{-1}z)-f_{\ell m}(y_{ij}+N^{-1}z)]\|_{\M_{nk}}d\mu(z)\\
&\leq \|[f_{\ell m}]\|_{\Lip,n,k}\|[x_{ij}]-[y_{ij}]\|_{\M_{n}(X)}.
\end{align*}
So, $\|[f^N_{\ell m}]\|_{\Lip,n,k}\leq\|[f_{\ell m}]\|_{\Lip,n,k}$ for all $N\in\N$.  Moreover, by the Lebesgue dominated convergence theorem and \cite[Theorem 6.42]{BenyaminiLindenstraussBook}, each $f^N_{\ell m}$ is everywhere Gateaux $\R$-differentiable for all $\ell ,m\in \{1,\ldots, k\}$. 

Fix $n\in\N$, and notice that, for all $[x_{ij}]\in \M_n(X)$, we have
\begin{align*}
\|[f_{\ell m}(x_{ij})-f^N_{\ell m}(x_{ij})]\|_{\M_{nk}}&\leq \int_{E}\|[f_{\ell m}(x_{ij})-f_{\ell m}(x_{ij}+N^{-1}z)]\|_{\M_{nk}}d\mu(z)\\
&\leq N^{-1}\|[f_{\ell m}]\|_{\Lip,n,k}\int_E\|[z]\|_{\M_n(X)}d\mu(z)\\
&\leq n^2 N^{-1}\|[f_{\ell m}]\|_{\Lip,n,k}
\end{align*}
Hence, given any $\eps>0$, there exists $N\in \N$ large enough so that \[\|[f_{\ell m}(x_{ij})-f^N_{\ell m}(x_{ij})]\|_{\M_{nk}}<\eps\] for all $[x_{ij}]\in \M_n(X)$.
\end{proof}

\begin{theorem}\label{ThmIsoLipLifProp}
Every separable operator space has the $n$-isometric Lipschitz-lifting property for all $n\in\N$.
\end{theorem}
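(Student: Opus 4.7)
The plan is to adapt the argument of Godefroy and Kalton (\cite[Theorem 3.1]{GodefroyKalton2003}) to the operator space framework. First, fix a nondegenerate Gaussian probability measure $\mu$ on $X$ with $\int_X \|z\|\, d\mu(z) \leq 1$, which exists by separability, and for each $N\in\N$ consider the (recentered) Gaussian-convolution operator
\[
\sigma_N(f)(x) := \int_X \bigl[f(x+N^{-1}z) - f(N^{-1}z)\bigr]\, d\mu(z),\qquad f\in\Lip_0^n(X,\C).
\]
The calculation in the proof of Proposition \ref{PropDensityGateauxDiff} shows that $\sigma_N$ is completely contractive on $\Lip_0^n(X,\C)$, a routine dominated-convergence argument together with the Krein--Smulian theorem shows that $\sigma_N$ is $w^*$-continuous, and $\sigma_Nf$ is everywhere Gateaux $\R$-differentiable.

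Next, I define $\Psi_N\colon \Lip_0^n(X,\C)\to X^*$ by taking the $\C$-linear part of the Gateaux derivative at $0$,
\[
\Psi_N(f)(x) := \tfrac{1}{2}\bigl(D(\sigma_Nf)_0(x) - i\, D(\sigma_Nf)_0(ix)\bigr).
\]
Using Proposition \ref{PropComplRadeThm1} together with the direct observation that the $\C$-linearization operation $L\mapsto L_c$ on bounded $\R$-linear maps $X\to\C$ does not increase $n$-amplification norms (since $\|[ix_{ij}]\|_{\M_n(X)}=\|[x_{ij}]\|_{\M_n(X)}$), one verifies that $\Psi_N$ is $\C$-linear and an $n$-contraction into $X^*$. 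Since $\mu$ has mean zero, $\sigma_N$ fixes every $x^*\in X^*$, so $\Psi_N$ restricts to the identity on the canonical copy of $X^*$ inside $\Lip_0^n(X,\C)$.

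The heart of the argument is to verify that $\Psi_N$ is $w^*$-to-$w^*$ continuous; by Krein--Smulian this need only be checked on the unit ball. The Cameron--Martin integration-by-parts formula yields, for each $x$ in the Cameron--Martin space $H_\mu$ of $\mu$, the explicit identity
\[
D(\sigma_Nf)_0(x) = \int_X f(N^{-1}z)\, \hat{x}(z)\, d\mu(z),
\]
where $\hat x\in L^2(\mu)$ is the linear random variable associated to $x$. Since $\|z\|\cdot|\hat x(z)|$ is $\mu$-integrable and each $f\in\Lip_0^n(X,\C)$ satisfies $|f(y)|\leq \|f\|_{\Lip}\|y\|$, the right-hand side is continuous in $f$ under pointwise convergence on the unit ball, so $\Psi_N(\,\cdot\,)(x)$ is $w^*$-continuous for every $x\in H_\mu$. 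The density of $H_\mu$ in $X$ (by nondegeneracy of $\mu$) combined with the uniform bound $\|\Psi_N(f)\|_{X^*}\leq \|f\|_{\Lip,n}$ then upgrades this to $w^*$-continuity at every $x\in X$ by a routine approximation argument.

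Once $\Psi_N$ is known to be $w^*$-continuous, its pre-adjoint $T_N\colon X\to\cF^n(X)$ is a $\C$-linear $n$-contraction, and the relation $\Psi_N\circ(\beta_X^n)^*=\mathrm{Id}_{X^*}$ dualizes to $\beta_X^n\circ T_N=\mathrm{Id}_X$, witnessing the $n$-isometric Lipschitz-lifting property. The main obstacle is the $w^*$-continuity step: the Gaussian smoothing provided by $\sigma_N$ combined with the Cameron--Martin formula is essential, as it is what converts the derivative (which a priori depends in a complicated way on $f$) into an integration of $f$ against a fixed signed measure.
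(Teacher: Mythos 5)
Your argument is correct, but it takes a genuinely different route from the paper's. The paper follows the Godefroy--Kalton scheme: it fixes a linearly independent sequence $(x_m)_m$ with dense span, forms the compact cube $L([0,1]^{\N})\subset X$, and defines the lift on that span by averaging derivatives, $R(x)(f)=\int_{[0,1]^{\N}}Df_{L(\bar t)}(x)\,d\lambda(\bar t)$ for Gateaux $\R$-differentiable $f$; the fact that $R(x)$ lands in $\cF^{n}(X)$ rather than merely in $\Lip_0^{n}(X,\C)^*$ is inherited from \cite[Theorem 3.1]{GodefroyKalton2003}, and the $n$-contraction estimate is pushed from Gateaux differentiable matrices to all of $\M_k(\Lip_0^{n}(X,\C))$ via Propositions \ref{PropComplRadeThm1} and \ref{PropDensityGateauxDiff} together with the weak$^*$ description in Proposition \ref{prop-duality-for-F^n}. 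You instead smooth each $f$ first, differentiate once at the origin, and realize the lift as a pre-adjoint; the Cameron--Martin identity plays the role of the cube average, converting the derivative into integration of $f$ against a fixed kernel and thereby making fully explicit exactly the point the paper delegates to Godefroy--Kalton (namely that the lift takes values in $\cF^{n}(X)$, i.e.\ the weak$^*$-continuity of $f\mapsto\Psi_N(f)(x)$), and it yields a $\C$-linear lift for a single fixed $N$. The matrix-norm estimates you invoke are the same two the paper uses. Two details to tighten: (i) the Cameron--Martin space $H_\mu$ is only an $\R$-linear subspace, so $ix$ need not lie in $H_\mu$ when $x$ does; you should first extend $x\mapsto\bigl(f\mapsto D(\sigma_Nf)_0(x)\bigr)$ from $H_\mu$ to all of $X$ by your norm-approximation argument (using that $\cF^{n}(X)$ is norm-closed in $\Lip_0^{n}(X,\C)^*$ and that this assignment is $1$-Lipschitz in $x$ with values bounded by $\|f\|_{\Lip,n}\|x\|$), and only afterwards form the $\C$-linear part $\tfrac12(\phi_x-i\phi_{ix})$; (ii) the dominated convergence step concerns nets converging pointwise on the unit ball, so reduce to sequences by observing that $\cF^{n}(X)$ is separable and hence the ball of $\Lip_0^{n}(X,\C)$ is weak$^*$-metrizable.
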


\begin{proof}
Let $X$ be a separable  operator space, $n\in\N$,  and let $(x_m)_m$ be a linearly independent sequence in $X$ whose linear span $X_0$ is dense in $X$ and so that \[\Big\{\sum_mt_mx_m\mid (t_m)_m\in [0,1]^\N\Big\}\] is a compact subset of $X$. Define $L:[0,1]^\N\to X$ by letting $L(\bar t)=\sum_mt_mx_m$ for all $\bar t=(t_m)_m\in [0,1]^\N$, and let $\lambda$ denote the product measure of the Lebesgue measure on $[0,1]$.

Proceeding exactly as in   the proof of  \cite[Theorem 3.1]{GodefroyKalton2003}, there exists a linear map $R:X_0\to \cF^{n}(X)$ so that $\beta_X^n R(x_m)=x_m$ for all $m\in\N$ and 
\[R(x)(f)=\int_{[0,1]^ \N} Df_{L(\bar t)}(x)  d\lambda(\bar t)\]
for all $x\in X_0$ and all Gauteaux $\R$-differentiable maps $f\in \Lip_0^{n}(X,\C)$. (We leave the details to the reader.) Hence, given $k\in\N$, $[x_{ij}]\in \M_n(X_0)$, and a Gateaux $\R$-differentiable  $[f_{\ell m}]\in \M_k(\Lip_0^{n}(X,\C))$, we have that
\begin{align*}
\|[R(x_{ij})(f_{\ell m})]\|_{\M_{nk}}&=\Big\|\Big[\int_{[0,1]^ \N} D (f_{\ell m})_{L(\bar t)}(x_{ij})  d\lambda(\bar t)\Big]\Big\|_{\M_{nk}}\\
&\leq \int_{[0,1]^ \N}\|[ D (f_{\ell m})_{L(\bar t)}(x_{ij}) ]\|_{\M_{nk}} d\lambda(\bar t)\\
&\leq \int_{[0,1]^ \N}\| [D(f_{\ell m})_{L(\bar t)} ]\|_{n}\|[x_{ij}]\|_{\M_n(X)}d\lambda(\bar t).
\end{align*}
By Proposition \ref{PropComplRadeThm1}, we have that  $\| [D (f_{\ell m})_{y}]\|_{n}\leq  \|[f_{\ell m}]\|_{\Lip,n,k}$, so  \[\|[R(x_{ij})(f_{\ell m})]\|_{\M_{nk}}\leq \|[f_{\ell m}]\|_{\Lip,n,k}\|[x_{ij}]\|_{\M_n(X)}.\] By Proposition \ref{PropDensityGateauxDiff}, the subset of the unit ball of $\M_k(\Lip_0^n(X,\C))$ consisting of all Gateaux $\R$-differentiable maps is  dense with respect to the uniform convergence topology in this unit ball. Therefore, by Proposition \ref{prop-duality-for-F^n}, this subset is also weak$^*$ dense in it. This shows that  
\[\|[R(x_{ij})(f_{\ell m})]\|_{\M_{nk}}\leq \|[f_{\ell m}]\|_{\Lip,n,k}\|[x_{ij}]\|_{\M_n(X)} \]
for all  $[f_{\ell m}]\in \M_k(\Lip_0^{n}(X,\C))$, which in turns implies that 
$R$ extends to a complete $n$-contraction  $T:X\to \cF^{n}(X)$. Since $\beta_X^n R(x_n)=x_n$ for all $n\in\N$, it follows that $\beta_X^n T=\mathrm{Id}_{X}$, and we are done.
\end{proof}

The next proposition is the operator space version of a result of T. Figiel \cite{Figiel1968} (see also \cite[Theorem 14.4.10]{AlbiacKaltonBook}).

\begin{proposition}\label{PropFigielComplete}
Let $X$ and $Y$ be  operator spaces, $n\in\N$, and let $f:X\to Y$ be an $n$-isometry so that $\overline{\Span}_\R\{f(X)\}=Y$ and $f(0)=0$. Then there exists a unique $\R$-linear map $T:Y\to X$ so that $\|T_n\|_{n}=1$ and $T\circ f=\mathrm{Id}_{X}$.
\end{proposition}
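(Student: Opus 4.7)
The plan is to derive the result from Figiel's classical theorem (see \cite[Theorem 14.4.10]{AlbiacKaltonBook}) applied simultaneously at the first and the $n$-th matricial levels. As preliminaries, I would first observe that $f$ itself is an $\R$-isometry from $X$ to $Y$: using $f(0)=0$ and applying the $n$-isometry of $f_n$ to the matrices with $x$, respectively $x'$, in the $(1,1)$-entry and zero elsewhere yields $\|f(x)-f(x')\|_Y=\|x-x'\|_X$. Then I would promote the density hypothesis to the $n$-th matricial level, showing that $\overline{\Span}_\R f_n(\M_n(X))=\M_n(Y)$. Given $y\in Y$ approximated by $\sum_p c_p f(x_p)$ with $c_p\in\R$ and indices $a,b$, the rank-one matrix $y\cdot e_{ab}\in\M_n(Y)$ is approximated by $\sum_p c_p f_n(x_p e_{ab})$; the assumption $f(0)=0$ is what allows $f_n$ to send the matrix with $x_p$ in position $(a,b)$ and zeros elsewhere to the matrix with $f(x_p)$ in that position and zeros elsewhere. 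Decomposing any element as $[y_{ij}]=\sum_{a,b}y_{ab}e_{ab}$ then gives the claimed density, and the same computation shows $\Span_\R f_n(\M_n(X))=\M_n(\Span_\R f(X))$.

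With these in place, I would invoke Figiel's theorem twice: applied to $f:X\to Y$ (as $\R$-Banach spaces) to produce a unique $\R$-linear contraction $T:Y\to X$ with $T\circ f=\mathrm{Id}_X$, and applied to $f_n:\M_n(X)\to\M_n(Y)$ to produce a unique $\R$-linear contraction $S:\M_n(Y)\to\M_n(X)$ with $S\circ f_n=\mathrm{Id}_{\M_n(X)}$. The key step is to show that $T_n=S$, from which $\|T_n\|_n=\|S\|=1$ follows. Both maps are $\R$-linear and continuous (the former with the a priori bound $\|T_n\|\leq n^2\|T\|=n^2$) and both restrict to $f_n^{-1}$ on $f_n(\M_n(X))$; thus by $\R$-linearity they coincide on $\Span_\R f_n(\M_n(X))$ and by continuity on the closure $\M_n(Y)$, giving $T_n=S$.

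For uniqueness of $T$, any $\R$-linear $T':Y\to X$ with $T'\circ f=\mathrm{Id}_X$ and $\|T'_n\|_n=1$ satisfies $\|T'\|\leq\|T'_n\|_n=1$ via the embedding $y\mapsto y\cdot e_{11}$ of $Y$ into $\M_n(Y)$, so $T'$ is a contraction satisfying the conclusion of the level-one Figiel theorem and hence coincides with $T$. The principal obstacle I anticipate is verifying the density $\overline{\Span}_\R f_n(\M_n(X))=\M_n(Y)$ needed to apply Figiel at level $n$: it looks innocuous but genuinely uses $f(0)=0$ to convert entrywise approximations in $Y$ into approximations of full matrices in $\M_n(Y)$. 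Once this is in hand, the identification $T_n=S$ is a standard uniqueness-of-continuous-linear-extension argument on a dense subspace.
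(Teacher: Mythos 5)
Your proof is correct and follows the same overall strategy as the paper: reduce to the classical Figiel theorem applied at the matricial levels, then identify the level-$n$ Figiel map with the $n$-th amplification of the level-$1$ map. The one place where you genuinely diverge is in how that identification is carried out. The paper compresses its level-$n$ map $T^n$ to the corner maps $\pi_{ij,X}\circ T^n\circ I_{ij,Y}$, checks each is a norm-one left inverse of $f$, and invokes the \emph{uniqueness} clause of Figiel's theorem at level $1$ to conclude each corner equals $T$; you instead observe that $T_n$ and $S$ are both bounded $\R$-linear maps restricting to $f_n^{-1}$ on $f_n(\M_n(X))$, hence agree on the dense subspace $\Span_\R f_n(\M_n(X))$ and therefore everywhere. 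Your route only uses the existence part of Figiel's theorem at level $n$ plus elementary density, and it sidesteps the need to account for the off-corner entries of $T^n(y\cdot e_{ij})$ — a point the paper's corner argument passes over quickly — so it is, if anything, the more airtight of the two. You also spell out the density $\overline{\Span}_\R f_n(\M_n(X))=\M_n(Y)$ and the role of $f(0)=0$ in it, which the paper asserts with an ``it follows that.'' Your uniqueness argument is fine as well (in fact, uniqueness already follows for any two bounded $\R$-linear left inverses of $f$ directly from density of $\Span_\R f(X)$, without passing through the norm bound).
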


\begin{proof}
  Since  $\overline{\Span}_\R\{f(X)\}=Y$, it follows that \[\overline{\Span}_\R\{f_k(X)\}=\M_k(Y)\]
  for all $k\in\N$. Hence, as $f_k$ is an isometry for $k\leq n$, the result for Banach spaces (see \cite[Theorem 14.4.10]{AlbiacKaltonBook}, or \cite{Figiel1968}) applied to the map $f_k:\M_k(X)\to \M_k(Y)$ gives us an $\R$-linear map $T^k:\M_k(Y)\to \M_k(X)$ so that $T^k\circ f_k=\mathrm{Id}_{\M_k(X)}$ and $\|T^k\|=1$. Moreover,   this is the unique $\R$-linear  map with such properties.  If $k=1$, we simply write $T=T^1$; so $T\circ f=\mathrm{Id}_{X}$.

Let us notice that
$T^n=T_n$, i.e., $T^n$ is the $n$-th amplification of $T$.  For that, fix $i,j\in \{1,\ldots, n\}$, let $I_{ij,X}:X\to \M_n(X)$ and $I_{ij,Y}:Y\to \M_n(Y)$ be the natural inclusions into the $(i,j)$-th coordinate of $\M_n(X)$ and $\M_n(Y)$, respectively, and  let $\pi_{ij,X}:\M_n(X)\to X$ be the projection onto the $(i,j)$-th coordinate. Define $T^n_{ij}:Y\to X$ by   \[T^n_{ij}=\pi_{ij,X}\circ T^n\circ I_{ij,Y}.\] As $f(0)=0$, we have $I_{ij,Y}\circ f=f_n\circ I_{ij,X}$, and it follows that 
\[T^n_{ij}\circ f=\pi_{ij,X}\circ T^n\circ f_n\circ I_{ij,X}=\pi_{ij,X}\circ\mathrm{Id}_{\M_n(X)}\circ I_{ij,X}=\mathrm{Id}_{X}.\]
 As $\|T^n\|=1$, it is clear that $\|T^n_{ij}\|= 1$. Hence, as  $T=T^1$ is the unique $\R$-linear operator $T:Y\to X$ so that $\|T\|=1$ and $T\circ f=\mathrm{Id}_{X}$,  the equality above implies that $T^n_{ij}=T$. Since $i$ and $j$ are arbitrary,  $T^n$ is the $n$-th amplification of $T$.

Since $\|T^n\|=1$, it follows that  $\|T_n\|_{n}=1$, and we are done.
\end{proof}

The following should be compared with \cite[Proposition 2.9]{GodefroyKalton2003}.

\begin{proposition}\label{PropLiftIsoLiftPropSections}
Let $X$ and $Y$ be   operator spaces, $n\in\N$,  and assume that $X$ has the $n$-isometric Lipschitz-lifting property. Let $Q:Y\to X$ be an $\R$-linear $n$-contractive  surjection. If $Q$ admits an  $n$-isometric section, then $Q$ admits an $\R$-linear $n$-isometric section. 
\end{proposition}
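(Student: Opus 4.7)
The plan is to use the linearization provided by the $n$-Lipschitz-free space machinery to convert the nonlinear $n$-isometric section into an $\R$-linear one. First I would reduce to the case $S(0)=0$ by replacing the given $n$-isometric section $S:X\to Y$ by $S'(x) := S(x) - S(0)$: this map is still an $n$-isometric section, because the isometry of $S'_n$ is unaffected by a constant translation, and $\R$-linearity of $Q$ together with $Q\circ S = \mathrm{Id}_X$ forces $Q(S(0))=0$, so that $Q\circ S' = \mathrm{Id}_X$.

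Working in the $\R$-Lipschitz-free space $\cF_\R^n(X)$ of Subsection \ref{SubsectionRealLipFree}, the $\R$-analog of Corollary \ref{CorLinearizationLipMapsTargetOS} produces a unique $\R$-linear map $\bar S:\cF_\R^n(X)\to Y$ with $\bar S\circ \delta_X^n = S$ and $\|\bar S_n\|_n = \Lip_n(S) = 1$. Simultaneously, the $\R$-analog of the $n$-isometric Lipschitz-lifting property --- which is the version actually produced by the proof of Theorem \ref{ThmIsoLipLifProp}, since the map $R$ built there is $\R$-linear by construction --- furnishes an $\R$-linear $n$-contraction $T:X\to \cF_\R^n(X)$ with $\beta_X^n\circ T = \mathrm{Id}_X$. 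The proposed section is then $\rho := \bar S\circ T: X\to Y$, which is automatically $\R$-linear and an $n$-contraction since $\|\rho_n\|_n \le \|\bar S_n\|_n\|T_n\|_n \le 1$.

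The identity $Q\circ \rho = \mathrm{Id}_X$ follows from a uniqueness-of-linearization argument: both $Q\circ \bar S$ and $\beta_X^n$ are $\R$-linear bounded maps $\cF_\R^n(X)\to X$ that agree on $\delta_X^n(X)$, namely both satisfy $(\cdot)\circ \delta_X^n = \mathrm{Id}_X$, and $\delta_X^n(X)$ $\R$-spans a dense subspace of $\cF_\R^n(X)$ by the very definition of the real Lipschitz-free space; hence $Q\circ \bar S = \beta_X^n$, and composing with $T$ yields $Q\circ \rho = \beta_X^n\circ T = \mathrm{Id}_X$. To upgrade $\rho_n$ from an $n$-contraction to an $n$-isometry, it suffices to observe that $Q_n\circ \rho_n = \mathrm{Id}_{\M_n(X)}$ together with $\|Q_n\|_n\le 1$ gives $\|[x_{ij}]\|_{\M_n(X)} \le \|\rho_n([x_{ij}])\|_{\M_n(Y)}$ for every $[x_{ij}]\in\M_n(X)$, which matches the contraction bound.

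The main point to get right is the real-versus-complex bookkeeping: one must use the $\R$-Lipschitz-free space $\cF_\R^n(X)$ throughout so that both $\bar S$ and $T$ are genuinely $\R$-linear. Indeed, if one worked instead with the complex space $\cF^n(X)$, the linearization $\bar S$ would be $\C$-linear while $Q\circ \bar S$ would only be $\R$-linear; since $\delta_X^n(X)$ is only $\C$-dense (not $\R$-dense) in $\cF^n(X)$, the uniqueness step identifying $Q\circ \bar S$ with $\beta_X^n$ could fail. Once this $\R$-versus-$\C$ subtlety is navigated, the remaining verifications are routine.
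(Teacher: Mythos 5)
Your proof is correct and follows essentially the same route as the paper: linearize the nonlinear section via $\cF_\R^n(X)$, identify $Q\circ\bar S$ with $\beta_X^n$ by density of the $\R$-span of the $\delta_x$'s, and compose with the lifting map $T$. The extra details you supply --- translating the section so it fixes $0$, and upgrading $\rho_n$ from a contraction to an isometry using $Q_n\circ\rho_n=\mathrm{Id}$ with $\|Q_n\|_n\le 1$ --- are points the paper's proof glosses over, and you handle them correctly.
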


\begin{proof}
Let $L:X\to Y$ be an $n$-isometric section of $Q$ and let $\bar L:\cF^n_\R(X)\to Y$ be given by Corollary  \ref{CorLinearizationLipMapsTargetOS} (see Subsection \ref{SubsectionRealLipFree}),  so $\bar L$ is an $\R$-linear $n$-isometry so that $L=\bar L\delta^n_X$. Since $L$ is a section of $Q$, it follows that $Q\bar L\delta^n_X=\mathrm{Id}_X$; so, we must have  $Q\bar L =\beta_X^n$. As $X$ has the $n$-isometric Lipschitz-lifting property, let $T:X\to \cF^n_\R(X)$ be the $\R$-linear $n$-contraction with $\beta_X^n T=\mathrm{Id}_X$. We must then have that $Q\bar L T =\mathrm{Id}_X$, so $\bar L T$ is an $\R$-linear $n$-isometric section of $Q$.
\end{proof}

\begin{proof}[Proof of Theorem \ref{ThmCompIsomEmbImpliesCompLinIsomEmb}]
Let $(f^n)_n$ be an almost completely isometric embedding of $X$ into $Y$. Fix $n\in\N$  and set $Z=\overline{\Span}_\R\{f^n(X)\}$. By Proposition \ref{PropFigielComplete}, there exists an $\R$-linear $n$-contraction $T:Z\to X$ so that $T\circ f^n=\mathrm{Id}_{X}$, i.e., $f^n$ is an $n$-isometric   section of $T$. Since $X$ is separable it has the $n$-isometric Lipschitz-lifting property by Theorem \ref{ThmIsoLipLifProp}, and  Proposition \ref{PropLiftIsoLiftPropSections} implies that $T$ admits an $\R$-linearly $n$-isometric section $u:X\to Z$. As $n\in\N$ was arbitrary, we are done.
\end{proof}

We finish this section by pointing out that there are nonseparable examples of operator spaces having the $n$-isometric Lipschitz-lifting property: simply take a nonseparable pointed operator metric space in the next proposition. 
The proof is exactly the same as that of \cite[Lemma 2.10]{GodefroyKalton2003}, so we omit it.
 
\begin{proposition}
Let $(X,x_0)$ be a pointed operator metric space.
For any $n\in\N$, the $n$-Lipschitz-free space $\cF^n(X)$ has the $n$-isometric Lipschitz-lifting property. 
\end{proposition}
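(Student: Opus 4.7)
The plan is to construct the required lifting $T$ by applying the linearization procedure of Lemma \ref{LemmaLinearizationLipMaps} to the canonical embedding $\delta^n_X : X \to \cF^n(X)$ itself, exactly in the spirit of the classical Banach space construction of Godefroy--Kalton. More precisely, since $\cF^n(X)$ is an operator space, it is a pointed operator metric space with distinguished point $0$, and by Proposition \ref{PropDeltaCompIsomIntoFreLipSp} the map $\delta^n_X : X \to \cF^n(X)$ is an $n$-isometry with $\delta^n_X(x_0) = 0$; in particular $\Lip_n(\delta^n_X) = 1$.

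First, I would apply Lemma \ref{LemmaLinearizationLipMaps} to $L = \delta^n_X$, obtaining a unique bounded linear map
\[
T := \widetilde{\delta^n_X} : \cF^n(X) \to \cF^n\!\bigl(\cF^n(X)\bigr)
\]
with $\|T\|_{\cb} = \Lip_n(\delta^n_X) = 1$ and making the diagram $T \circ \delta^n_X = \delta^n_{\cF^n(X)} \circ \delta^n_X$ commute. In particular, $T$ is a complete contraction, hence an $n$-contraction, which is what the definition of the $n$-isometric Lipschitz-lifting property requires.

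Second, I would check that $\beta^n_{\cF^n(X)} \circ T = \mathrm{Id}_{\cF^n(X)}$. By linearity and continuity it suffices to verify this on the dense subspace $\Span\{\delta_x : x \in X\} \subset \cF^n(X)$. For any $x \in X$, applying the intertwining identity and the defining property of $\beta^n_{\cF^n(X)}$ (namely $\beta^n_{\cF^n(X)} \circ \delta^n_{\cF^n(X)} = \mathrm{Id}_{\cF^n(X)}$, from the discussion preceding Corollary \ref{CorLinearizationLipMapsTargetOS}), we have
\[
\beta^n_{\cF^n(X)}\bigl(T(\delta_x)\bigr)
= \beta^n_{\cF^n(X)}\bigl(\delta^n_{\cF^n(X)}(\delta^n_X(x))\bigr)
= \delta^n_X(x)
= \delta_x,
\]
as desired. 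Extending by $\R$-linearity and continuity then yields $\beta^n_{\cF^n(X)} \circ T = \mathrm{Id}_{\cF^n(X)}$.

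There is really no main obstacle here: the construction is a formal diagram chase once the linearization property (Lemma \ref{LemmaLinearizationLipMaps}) and the fact that $\delta^n_X$ is an $n$-isometry (Proposition \ref{PropDeltaCompIsomIntoFreLipSp}) are in hand. The only point that requires (mild) care is that the norm control coming from Lemma \ref{LemmaLinearizationLipMaps} uses $\Lip_n$ rather than the ordinary Lipschitz constant, but this matches exactly the $n$-contraction condition in Definition \ref{DefiLipLiftProp}; and that the identity $\beta^n_{\cF^n(X)} \circ T = \mathrm{Id}$ must be verified on a dense set before being extended, which is immediate from the density of $\Span\{\delta_x\}$ in $\cF^n(X)$.
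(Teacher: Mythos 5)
Your proof is correct and is exactly the argument the paper has in mind: the paper omits the proof, citing that it is the same as \cite[Lemma 2.10]{GodefroyKalton2003}, which is precisely your construction of $T=\widetilde{\delta^n_X}$ via Lemma \ref{LemmaLinearizationLipMaps} followed by checking $\beta^n_{\cF^n(X)}\circ T=\mathrm{Id}$ on the dense span of the $\delta_x$. The norm bookkeeping ($\|T\|_{\cb}=\Lip_n(\delta^n_X)=1$, using that $\delta^n_X$ is an $n$-isometry by Proposition \ref{PropDeltaCompIsomIntoFreLipSp}) and the density argument are both handled correctly.
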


\section{An alternative approach to $n$-Lipschitz-free operator spaces}\label{SectionAlternative}

In Definition \ref{DefNLipFree}, we have defined the operator space structure of $\cF^n(X)$ in a ``dual way''. Indeed, this is done by embedding $\cF^n(X)$ into the operator space $\Lip^n_0(X,\C)^*$.
Therefore, the norm of an element of $\M_m(\cF^n(X))$ is naturally calculated as a supremum.
It is always useful to have an alternative description of such a quantity as an infimum, which is what we will do in this section.
Our approach in this section mirrors the presentation of the classical case as in \cite{Weaver1999BookSecondEdition}, which in turn follows that of \cite{Arens-Eells}. 
 
Recall from Remark \ref{RemNLipFreeIsNMaximal} above that $\cF^n(X)$ is an $n$-maximal operator space, so
Proposition \ref{PropMaxAsInf} below goes in the desired direction: for an $m$-maximal operator space, it gives a description of the operator space structure as an infimum.
It is a generalization of \cite[Theorem 3.1]{Pisier-OS-book}.
The proof is exactly the same, just using the description of $\MAX_m(E)$ as in \cite[Section 2]{OikhbergRicard2004MathAnn} or \cite[Proposition I.3.1]{LehnerPhDThesis}.
Proposition \ref{PropMaxAsInf} is not needed for what we are doing, but it provided the inspiration for Theorem \ref{Thm-Alternative-N-Lip-Free} below (and it might be of independent interest).

Given an operator space $E$ and $n,k\in\N$, we view operators $a\in \M_k(\M_n(E))$ as $k$-by-$k$ matrices of $m$-by-$m$ matrices with entries in $E$.

\begin{proposition}\label{PropMaxAsInf}
Let $E$ be an operator space and $n\in\N$, $m\in\N$. For each $x \in \M_n(E)$ we have  
\[
\n{x}_{\M_n(\MAX_m(E))} = \inf \big\{ \n{\alpha} \n{D} \n{\beta}  \mid x = \alpha \cdot D \cdot \beta \big\},
\]
where the infimum above is taken over all $N\in\N$, all $\alpha \in \M_{n,Nm}$, all $\beta \in \M_{Nm,n}$, and  all $N\times N$ diagonal matrices $D \in \M_{N}(\M_m(E))$ with entries in $\M_m(E)$.
\end{proposition}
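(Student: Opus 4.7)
The plan is to verify that the right-hand side of the claimed identity, which I denote $\nu_n(x)$, defines an operator space structure on $E$ satisfying the two characterizing properties of $\MAX_m(E)$ recalled after Definition I.3.2 in the excerpt: (i) $\nu_m$ coincides with the given norm on $\M_m(E)$, and (ii) for every operator space $Y$ and every linear $v : E \to Y$, $\|v : (E, \nu) \to Y\|_{\cb} = \|v_m\|$. The uniqueness of $\MAX_m(E)$ will then force $\nu_n(x) = \|x\|_{\M_n(\MAX_m(E))}$. The argument is a direct adaptation of the proof of \cite[Theorem 3.1]{Pisier-OS-book}, with the role of the scalar field played by $\M_m(E)$.

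First I would verify Ruan's axioms for $\nu$. Finiteness of $\nu_n(x)$ follows by placing the entries of $x$ at prescribed locations inside $N$ diagonal blocks of size $m \times m$. The matrix-multiplication axiom $\nu_p(\gamma x \delta) \le \|\gamma\| \nu_n(x) \|\delta\|$ is immediate, since any representation $x = \alpha D \beta$ yields $\gamma x \delta = (\gamma\alpha) D (\beta\delta)$. The direct-sum axiom $\nu_{n+n'}(x \oplus y) = \max(\nu_n(x), \nu_{n'}(y))$ is obtained by combining representations of $x$ and $y$ block-diagonally, together with the standard fact that the norm of a block-diagonal element in any operator space is the maximum of the block norms.

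Next I turn to property (ii), which is the heart of the proof. The inequality $\|v_m\| \le \|v\|_{\cb}$ is trivial. For the reverse, fix $x \in \M_n(E)$ and any representation $x = \alpha D \beta$ with $D \in \M_N(\M_m(E))$ block diagonal with blocks $D_1, \dots, D_N \in \M_m(E)$. Since $v$ acts entrywise, $v_n(x) = \alpha \cdot v_N(D) \cdot \beta$, where $v_N(D) \in \M_N(\M_m(Y))$ is block diagonal with blocks $v_m(D_k)$. Using once more that block-diagonal norms are $\ell_\infty$-sums of block norms,
\[
\|v_n(x)\|_{\M_n(Y)} \le \|\alpha\|\|\beta\|\max_k \|v_m(D_k)\|_{\M_m(Y)} \le \|v_m\| \cdot \|\alpha\|\|D\|\|\beta\|,
\]
and taking the infimum over representations gives $\|v_n(x)\|_{\M_n(Y)} \le \|v_m\| \cdot \nu_n(x)$, as required.

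Property (i) is then a quick consequence: $\nu_m(x) \le \|x\|_{\M_m(E)}$ is witnessed by the trivial representation $x = I_m \cdot x \cdot I_m$ with $N = 1$, while the reverse follows by specializing (ii) to $v = \mathrm{Id}_E : E \to E$ with its given structure. Invoking the uniqueness clause in the characterization of $\MAX_m(E)$ concludes the proof. The only conceptually nontrivial step is property (ii); the Ruan axioms are routine once one observes the $\ell_\infty$-behavior of block-diagonal norms, and (i) is then essentially automatic.
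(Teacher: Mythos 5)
Your proof is correct and follows essentially the same route as the paper's: define the infimum as a candidate matricial norm sequence, check Ruan's axioms, establish the universal property with respect to $m$-contractions, and invoke the uniqueness characterization of $\MAX_m(E)$ from \cite[Lemma 2.3]{OikhbergRicard2004MathAnn}. The paper merely states these steps as a direct adaptation of \cite[Theorem 3.1]{Pisier-OS-book}, whereas you spell out the key block-diagonal estimate explicitly; the content is the same.
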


\begin{proof}
For each $n$ define a norm $\tn{\cdot}_n$ on $\M_n(E)$ as the infimum appearing in the statement.
It is not difficult to see that this sequence of norms satisfies Ruan's axioms as in \cite[Section 2.2]{Pisier-OS-book}, and  therefore it defines an operator space structure on $E$ that we denote by $\tilde{E}$.

Let $F$ be an arbitrary operator space and $u: E\to F$ be linear and $m$-contractive. It follows easily from Ruan's axioms that $u: \tilde E\to F$ is completely contractive; hence, $\tilde E$ is completely isometric to $\MAX_m(E)$ by \cite[Lemma 2.3]{OikhbergRicard2004MathAnn}. \qedhere
\end{proof}

We can now give the promised alternative description of $\cF^n(X)$. It shows that in the specific case of an $n$-Lipschitz-free operator space, the representations appearing in Proposition \ref{PropMaxAsInf} can be assumed to be of a special form.

\begin{theorem}\label{Thm-Alternative-N-Lip-Free}
Let $(X,x_0)$ be a pointed operator metric space, $n\in\N$, and let $F = \Span\{\delta_x\}_{x\in X}\subset \cF^n(X)$. 
For any $m\in\N$ and $\mu \in \M_m(F)$ we have
\[
\n{\mu}_{\M_m(\cF^n(X))} = \inf \Big\{ \n{\alpha} \n{\beta} \max_{1 \le \ell \le N} |c_\ell| \n{[x^\ell_{ij} - y^\ell_{ij}]} \Big\}
\]
where the infimum is taken over all $N\in\N$ and all representations of $\mu$ of the form $\mu = \alpha \cdot D \cdot \beta$ where $\alpha \in \M_{m,Nn}$ and $\beta \in \M_{Nn,m}$ are scalar matrices, and $D \in \M_{N}(\M_n(F))$  is a diagonal matrix whose diagonal entries are of the form $c_\ell[\delta_{x^\ell_{ij}} - \delta_{y^\ell_{ij}}]_{ij}$ with $c_\ell$ a scalar and $[x^\ell_{ij}],[y^\ell_{ij}] \in \M_n(X)$ for  $1 \le \ell \le N$.
\end{theorem}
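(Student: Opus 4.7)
Let us denote the right-hand side of the claimed formula by $\tn{\mu}_m$. The plan is to prove the two inequalities separately.

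The easy direction $\|\mu\|_{\M_m(\cF^n(X))} \le \tn{\mu}_m$ is obtained by fixing any representation $\mu = \alpha \cdot D \cdot \beta$ of the prescribed form and invoking Ruan's axioms inside the operator space $\cF^n(X)$: they give $\|\mu\|_{\M_m(\cF^n(X))} \le \|\alpha\| \, \|D\|_{\M_N(\M_n(\cF^n(X)))} \, \|\beta\|$; the diagonal matrix $D$ has norm equal to $\max_\ell \|c_\ell[\delta_{x^\ell_{ij}} - \delta_{y^\ell_{ij}}]\|_{\M_n(\cF^n(X))}$, and Proposition \ref{PropDeltaCompIsomIntoFreLipSp} identifies each of these norms with $|c_\ell|\|[x^\ell_{ij} - y^\ell_{ij}]\|_{\M_n(X)}$. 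Taking the infimum over representations yields the inequality.

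For the reverse inequality, the strategy is to show that the formulas $(\tn{\cdot}_m)_m$ endow $F$ with an operator space structure and then to apply the Lipschitz-free universal property. Positivity and absolute homogeneity of $\tn{\cdot}_m$ are immediate (positivity follows from the easy direction already proved); the technical heart is the verification of the triangle inequality and Ruan's axioms. The key device is the three-parameter rescaling freedom of a representation $\alpha \cdot D \cdot \beta$: replacing $(\alpha, D, \beta)$ by $(s\alpha, tD, u\beta)$ with $stu = 1$ preserves both the product $\alpha D \beta$ and the quantity $\|\alpha\|\|\beta\|\|D\|$, so any two of those three norms can be rescaled to prescribed values. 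For Ruan's axiom (R2), we balance each representation so that $\|\alpha_i\| = \|\beta_i\| = \|D_i\|$, and then the block-diagonal construction $(\alpha_1 \oplus \alpha_2)(D_1 \oplus D_2)(\beta_1 \oplus \beta_2)$ achieves the required $\max(\tn{x}_m, \tn{y}_{m'})$ bound. For the triangle inequality, we write $x + y = [\alpha_1, \alpha_2](D_1 \oplus D_2)[\beta_1;\beta_2]$, rescale so that $\|D_1\| = \|D_2\|$, and then use $\|[\alpha_1, \alpha_2]\|^2 \le \|\alpha_1\|^2 + \|\alpha_2\|^2$ (and likewise for $\beta$) to make the product telescope to $\tn{x}_m + \tn{y}_m$. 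Axiom (R1) is immediate from composition of matrices.

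With $(F, (\tn{\cdot}_m)_m)$ now an operator space whose completion we denote by $G$, the inclusion $\delta : X \to G$, $x \mapsto \delta_x$, is $n$-Lipschitz with $\Lip_n(\delta) \le 1$ because the trivial single-term representation $N = 1$, $c_1 = 1$, $\alpha = \beta = I_n$ immediately yields $\tn{[\delta_{x_{ij}} - \delta_{y_{ij}}]}_n \le \|[x_{ij} - y_{ij}]\|_{\M_n(X)}$. Corollary \ref{CorLinearizationLipMapsTargetOS} then produces a complete contraction $\bar\delta : \cF^n(X) \to G$ that restricts to the identity on $F$, which yields $\tn{\mu}_m \le \|\mu\|_{\M_m(\cF^n(X))}$ for every $\mu \in \M_m(F)$. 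The main obstacle will be carrying out the balancing maneuvers in the previous paragraph cleanly, so that the simultaneous scaling of three different norms across potentially two distinct representations is done consistently and does not hide additional constants; the three degrees of freedom in the rescaling are exactly enough to achieve this if used carefully.
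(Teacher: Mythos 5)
Your proof is correct, and its second half takes a genuinely different route from the paper's. Both arguments share the first step: define the infimum quantities $\tn{\cdot}_m$ on $\M_m(F)$ and check that they are norms satisfying Ruan's axioms (the paper merely asserts this is ``not difficult''; your rescaling-and-block-diagonal verification is the standard way to fill that in, and deducing nondegeneracy from the easy inequality is cleaner than the paper's appeal to the Arens--Eells norm at level $1$). The divergence is in how the new structure is identified with $\cF^n(X)$. The paper computes the dual: it shows that the pairing maps of Proposition \ref{prop-duality-for-F^n} realize the dual of the new space as $\Lip_0^{n}(X,\C)$ completely isometrically, and both inequalities fall out of the two contractivity estimates (the trivial representation $I_n\cdot[\delta_{x_{ij}}-\delta_{y_{ij}}]\cdot I_n$ for one, the computation against $[f_{rs}]$ for the other). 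You instead prove $\n{\mu}_{\M_m(\cF^n(X))}\le\tn{\mu}_m$ directly inside $\cF^n(X)$ via Ruan's axioms and Proposition \ref{PropDeltaCompIsomIntoFreLipSp}, and obtain the reverse inequality by feeding the map $\delta:X\to G$, whose $n$-th Lipschitz constant is at most $1$ by the trivial representation, into Corollary \ref{CorLinearizationLipMapsTargetOS}; its linearization is completely contractive and restricts to the identity on $F$. Your route reuses the already-established universal property instead of redoing the duality computation, at the cost of not producing the identification of the dual of the new space with $\Lip_0^{n}(X,\C)$ as a byproduct. Two small points to make explicit in a write-up: $G$ is a priori an abstract matricially normed space, so Ruan's representation theorem should be invoked before applying Corollary \ref{CorLinearizationLipMapsTargetOS} (whose target is assumed to be a concrete operator space); and one should note that every $\mu\in\M_m(F)$ admits at least one representation of the prescribed form, so the infimum is finite.
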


\begin{proof}
Use the infimum in the statement to define a norm $\tn{\cdot}_m$ on each space $\M_m(F)$.
Note that just as in the classical case, in principle it is only clear that this defines seminorms. But since $\tn{\cdot}_1$ coincides with the Arens-Eells definition of the norm on the Lipschitz-free space $\cF(X)$ \cite[Section  3.1]{Weaver1999BookSecondEdition}, it follows from the discussion after \cite[Definition 3.2]{Weaver1999BookSecondEdition} that $\tn{\cdot}_1$ is a norm and thus so is $\tn{\cdot}_m$ for each $m$.
It is not difficult to see that this sequence of norms satisfies Ruan's axioms (\cite[Section 2.2]{Pisier-OS-book}), and therefore defines an operator space structure on the completion of $F$ with respect to $\tn{\cdot}_1$. To set notation, denote this operator space by $\fbs^n(X)$.
We now show that $\fbs^n(X)^*$ is completely isometric to $\Lip_0^{n}(X,\C)$ using the same maps as in Proposition \ref{prop-duality-for-F^n}. This will  imply  that $\fbs^n(X)$ and $\cF^n(X)$ are completely isometric via the identity map.

As before (cf.  Proposition \ref{prop-duality-for-F^n}), we define a map $u:\Lip_0^{n}(X,\C)\to \fbs^{n}(X)^*$ by letting \[u(f)\Big(\sum_ia_i\delta_{x_i}\Big)=\sum_ia_if(x_i)\] for all   $a_1,\ldots, a_m\in\C$, all $x_1,\ldots, x_m\in X$, and all $f\in \Lip_0^{n}(X,\C)$. Clearly $u(f)$ is linear on $F$, and the calculations below show that $u(f)$ extends to a linear functional on all of $\fbs^{n}(X)$.

Fix $[f_{rs}] \in \M_k(\Lip_0^{n}(X,\C))$.
Take $\mu = [\mu_{ab}] \in \M_m(F)$, and consider a representation $\mu = \alpha \cdot D \cdot \beta$ as in the statement of the theorem.
Now,
\begin{multline*}
\n{ [u(f_{rs})\mu_{ab}] }_{\M_{km}} = 
\n{ \big[ \alpha \cdot \diag\big( c_\ell[f_{rs}(x^\ell_{ij}) - f_{rs}(y^\ell_{ij})]_{ij} \big) \cdot \beta \big]_{rs} }_{\M_k(\M_m)} \\
= \n{ (I_k \otimes \alpha) \cdot \big[ \diag\big( c_\ell[f_{rs}(x^\ell_{ij}) - f_{rs}(y^\ell_{ij})]_{ij} \big) \big]_{rs} \cdot (I_k \otimes \beta) }_{\M_k(\M_m)} \\
\le \n{I_k \otimes \alpha} \n{ \big[ \diag\big( c_\ell[f_{rs}(x^\ell_{ij}) - f_{rs}(y^\ell_{ij})]_{ij} \big) \big]_{rs} }_{\M_k(\M_N(\M_n))} \n{I_k \otimes \beta}\\ 
= \n{\alpha} \n{\beta} \n{  \diag\big( c_\ell[f_{rs}(x^\ell_{ij}) - f_{rs}(y^\ell_{ij})]_{ijrs} \big) }_{\M_N(\M_{kn}))} \\
=
 \n{\alpha} \n{\beta} \max_{1 \le \ell \le N} |c_\ell| \n{ \big[f_{rs}(x^\ell_{ij}) - f_{rs}(y^\ell_{ij})\big]_{ijrs}} \\
 \le \n{\alpha} \n{\beta} \n{[f_{rs}]}_{\M_k(\Lip_0^{n}(M,\C))} \max_{1 \le \ell \le N} |c_\ell| \n{[x^\ell_{ij} - y^\ell_{ij}]}.
\end{multline*}
By taking the infimum over all representations of $\mu$ we conclude that
\[
\n{ [u(f_{rs})\mu_{ab}] }_{\M_{km}} \le \n{[f_{rs}]}_{\M_k(\Lip_0^{n}(M,\C))} \tn{\mu}_m,
\]
which shows that $u:\Lip_0^{n}(X,\C)\to \fbs^{n}(X)^*$ is a complete contraction.

As before (cf. Proposition \ref{prop-duality-for-F^n}), let $v:\fbs^{n}(X)^*\to \Lip_0^{n}(X,\C)$ be given by $v(g)(x)=g(\delta_x)$ for all $g\in \fbs^{n}(X)^*$ and all $ x\in X$; the calculations below will show that this is well defined.

Let $[g_{rs}] \in \M_m(\fbs^{n}(X)^*)$, and consider $[x_{ij}],[y_{ij}] \in \M_n(X)$.
Now,
\begin{multline*}
\n{ [v(g_{rs})(x_{ij}) - v(g_{rs})(y_{ij})] }_{\M_{mn}} = \n{ [g_{rs}(\delta_{x_{ij}} - \delta_{y_{ij}})] }_{\M_{mn}} \\
\le \n{[g_{rs}]}_{\M_m(\fbs^{n}(X)^*)} \n{ [\delta_{x_{ij}} - \delta_{y_{ij}}] }_{\M_n(\fbs^{n}(X))} \\
\le \n{[g_{rs}]}_{\M_m(\fbs^{n}(X)^*)} \n{ [x_{ij} - y_{ij}] }_{\M_n(X)},
\end{multline*}
where in the last inequality we have used the representation $[\delta_{x_{ij}} - \delta_{y_{ij}}] = I_n \cdot [\delta_{x_{ij}} - \delta_{y_{ij}}] \cdot I_n$ to estimate the norm $\n{ [\delta_{x_{ij}} - \delta_{y_{ij}}] }_{\M_n(\fbs^{n}(M))}$. The inequality above shows that $v:\fbs^{n}(X)^*\to \Lip_0^{n}(X,\C)$ is a complete contraction, finishing the proof.
\end{proof}

While Theorem \ref{Thm-Alternative-N-Lip-Free} provides a description for the norms of matrices over $\cF^n(X)$ whose entries come from $\Span\{\delta_x\}_{x\in X}$, it is also desirable to have a description that applies to all matrices over $\cF^n(X)$.
That is the content of our next result, whose proof is based on that of the usual representation for elements of the completion of the projective tensor product of operator spaces (\cite[Theorem 10.2.1]{Effros-Ruan-book}).
This is not a surprise: in the classical case, it is well-known that one can represent elements of the Lipschitz-free space  using a series representation that is reminiscent of what one does for projective tensor products of Banach spaces: e.g., compare \cite[Prop. 2.8]{Ryan} and 
\cite[Lemma 2.1]{AliagaPernecka2020}.

The notation for infinite matrices which we use below is that of \cite{Effros-Ruan-book}. The reader can find all details in Sections 1.1, 10.1 and 10.2 of \cite{Effros-Ruan-book}, we will only recall some basics.
Given an operator space $E$, $\M_\infty(E)$ is the space of infinite matrices $[x_{ij}]_{i,j=1}^\infty$ with entries $x_{ij}$ in $E$ whose truncations to $\M_n(E)$ are uniformly bounded; $\M_\infty(E)$ has a natural operator space structure.
A similar construction can be done for matrices $\M_{I,J}(E)$, where $I$ (resp. $J$) is a finite or countably infinite set of indices for the rows (resp. columns); operator space structures and matrix multiplication can then be defined using truncations again (in this context,   $\N$ is identified with $\infty$). Moreover, if $E=\C$, we simply write $\M_\infty=\M_\infty(\C)$, $\M_{n,\infty}=\M_{n,\infty}(\C)$, and $\M_{\infty,n}=\M_{\infty,n}(\C)$.

\begin{theorem}\label{Thm-Alternative-N-Lip-Free-Completion}
Let $(X,x_0)$ be a pointed operator metric space  and $n\in\N$.
For any $m\in\N$ and $\mu \in \M_m(\cF^n(X))$ we have
\[
\n{\mu}_{\M_m(\cF^n(X))} = \inf \Big\{ \n{\alpha} \n{\beta} \sup_{\ell} |c_\ell| \n{[x^\ell_{ij} - y^\ell_{ij}]} \Big\}
\]
where the infimum is taken over all representations of $\mu$ of the form $\mu = \alpha \cdot D \cdot \beta$ where $\alpha \in \M_{m,\infty }$ and $\beta \in \M_{\infty ,m}$ are scalar matrices, and $D \in \M_{\infty}(\M_n(\Span\{\delta_x\}_{x\in X}))$  is a diagonal matrix whose diagonal entries are of the form $c_\ell[\delta_{x^\ell_{ij}} - \delta_{y^\ell_{ij}}]_{ij}$ with $c_\ell$ a scalar and $[x^\ell_{ij}],[y^\ell_{ij}] \in \M_n(X)$ for each $\ell \in \N$.
\end{theorem}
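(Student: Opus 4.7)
The plan is to establish both inequalities between $\n{\mu}_{\M_m(\cF^n(X))}$ and the infimum on the right (which I denote by $\tn{\mu}_m$), using Theorem \ref{Thm-Alternative-N-Lip-Free} as the finite-dimensional input and mimicking the series-representation argument used for elements of the completion of the projective tensor product of operator spaces.

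For the direction $\n{\mu}_{\M_m(\cF^n(X))} \leq \tn{\mu}_m$, I would fix a representation $\mu = \alpha\cdot D\cdot\beta$ as in the statement and first observe that the formal product $\alpha D\beta$ equals an absolutely norm-convergent series in $\M_m(\cF^n(X))$: writing $\alpha_\ell$ (resp.\ $\beta_\ell$) for the $\ell$-th block column (resp.\ block row) of $\alpha$ (resp.\ $\beta$), and using Proposition \ref{PropDeltaCompIsomIntoFreLipSp} to see that $\n{D_\ell}_{\M_n(\cF^n(X))} = |c_\ell|\n{[x^\ell_{ij} - y^\ell_{ij}]}$, an entry-wise application of Cauchy--Schwarz against the $\ell^2$-rows and columns of the bounded scalar matrices $\alpha,\beta$ shows that $\sum_\ell \n{\alpha_\ell D_\ell \beta_\ell}_{\M_m(\cF^n(X))}$ is finite. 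Consequently the truncations $\mu^{(N)} := \sum_{\ell \leq N}\alpha_\ell D_\ell \beta_\ell$ converge in norm to $\mu$, and since each $\mu^{(N)} \in \M_m(\spa\{\delta_x\}_{x\in X})$ is already realized by a finite representation, Theorem \ref{Thm-Alternative-N-Lip-Free} gives
\[
\n{\mu^{(N)}}_{\M_m(\cF^n(X))} \leq \n{\alpha^{(N)}}\,\n{\beta^{(N)}}\,\max_{\ell \leq N}|c_\ell|\,\n{[x^\ell_{ij} - y^\ell_{ij}]} \leq \n{\alpha}\,\n{\beta}\,\sup_{\ell}|c_\ell|\,\n{[x^\ell_{ij} - y^\ell_{ij}]}.
\]
Taking $N \to \infty$ yields the desired bound.

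For the reverse inequality, I would combine density with a block-stitching construction. Given $\mu$ and $\varepsilon>0$, since $\spa\{\delta_x\}_{x\in X}$ is dense in $\cF^n(X)$ I can iteratively approximate to obtain a norm-convergent decomposition $\mu = \sum_{k\geq 1}\nu_k$ with each $\nu_k \in \M_m(\spa\{\delta_x\}_{x\in X})$ and $\sum_k\n{\nu_k} < \n{\mu} + \varepsilon$. For each $k$, Theorem \ref{Thm-Alternative-N-Lip-Free} yields a finite representation $\nu_k = \alpha_k\cdot D_k\cdot \beta_k$ whose product bound is at most $\n{\nu_k} + \varepsilon\,2^{-k}$; using the three-parameter rescaling $(\alpha_k, D_k, \beta_k) \mapsto (t\alpha_k, sD_k, u\beta_k)$ with $tsu = 1$, I may further assume $\n{\alpha_k}^2 = \n{\beta_k}^2 = \n{\nu_k} + \varepsilon\,2^{-k}$ and $\max_\ell|c^{(k)}_\ell|\n{[x^{(k),\ell}_{ij} - y^{(k),\ell}_{ij}]} \leq 1$. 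Concatenating the $\alpha_k$ horizontally into $\alpha \in \M_{m,\infty}$, the $\beta_k$ vertically into $\beta \in \M_{\infty,m}$, and placing the $D_k$ along the block diagonal to form $D$, the first part of the argument yields $\alpha D \beta = \sum_k \nu_k = \mu$, while
\[
\n{\alpha}^2 = \n{\textstyle\sum_k \alpha_k\alpha_k^*}_{\M_m} \leq \sum_k \n{\alpha_k}^2 \leq \n{\mu} + 2\varepsilon,
\]
the analogous bound on $\n{\beta}^2$ holds, and $\sup_{k,\ell}|c^{(k)}_\ell|\n{[x^{(k),\ell}_{ij} - y^{(k),\ell}_{ij}]} \leq 1$. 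Therefore $\tn{\mu}_m \leq \n{\mu}+ 2\varepsilon$, and letting $\varepsilon \to 0$ concludes.

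The most delicate step will be in the upper-bound direction: making rigorous the interpretation of the infinite matrix product $\alpha D\beta$ and verifying that it converges absolutely in the norm of $\M_m(\cF^n(X))$. This hinges on Proposition \ref{PropDeltaCompIsomIntoFreLipSp} (to compute the block norms $\n{D_\ell}$) together with a Cauchy--Schwarz estimate that uses boundedness of $\alpha$ and $\beta$ as scalar operators between appropriate $\ell_2$-sums; once absolute convergence is secured, passing to the limit from the finite representations of Theorem \ref{Thm-Alternative-N-Lip-Free} becomes routine, and the lower bound is then essentially a bookkeeping exercise for the block concatenation.
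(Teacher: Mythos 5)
Your proposal is correct and follows essentially the same route as the paper: the upper bound by truncating the infinite product, applying Theorem \ref{Thm-Alternative-N-Lip-Free} to each truncation, and passing to the limit; the lower bound by writing $\mu$ as an absolutely convergent series of elements of $\M_m(\Span\{\delta_x\}_{x\in X})$, taking near-optimal finite representations, rescaling so the diagonal blocks have norm at most $1$ and $\n{\alpha_k}=\n{\beta_k}$, and concatenating. The only cosmetic difference is that you justify convergence of the truncated products explicitly via a Cauchy--Schwarz/Hilbert--Schmidt estimate on the block columns of $\alpha$ and $\beta$, whereas the paper simply invokes the Effros--Ruan conventions for products of infinite matrices.
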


\begin{proof}
First let us prove the $\le$ inequality.
Suppose that we have a representation  $\mu = \alpha \cdot D \cdot \beta$ as in the statement.
By definition, this means that $\mu$ is the limit in $\M_m(\cF^n(X))$ of the truncated products $\mu_r = \alpha^{rn } \cdot D \cdot \beta^{rn }$ as $r \to \infty$.
By Theorem \ref{Thm-Alternative-N-Lip-Free} we get
\begin{multline*}
\n{\mu_r}_{\M_m(\cF^n(X))} \le \n{\alpha^{rn }} \n{\beta^{rn }} \sup_{1 \le \ell \le r} |c_\ell| \n{[x^\ell_{ij} - y^\ell_{ij}]} \\
\le \n{\alpha} \n{\beta} \sup_{\ell} |c_\ell| \n{[x^\ell_{ij} - y^\ell_{ij}]}    
\end{multline*}
so taking the limit as $r\to\infty$ yields the $\le$ inequality.

Assume now without loss of generality that $\mu\not=0$ and $\n{\mu}_{\M_m(\cF^n(X))}<1$.
Then there exists a sequence $\{\mu_s\}_{s=1}^\infty$ of nonzero terms in $\M_m( \Span\{\delta_x\}_{x\in X} )$ such that
\[
\mu = \sum_{s=1}^\infty \mu_s \qquad \text{and} \qquad \sum_{s=1}^\infty \n{\mu_s}_{\M_m(\cF^n(X))} < 1.
\]
Let $\varepsilon$ be a number with $0 < \varepsilon < 1 -\sum_{s=1}^\infty \n{\mu_s}_{\M_m(\cF^n(X))}$.
For each $s$, Proposition \ref{Thm-Alternative-N-Lip-Free} gives  a representation $\mu_s = \alpha_s \cdot D_s \cdot \beta_s$, 
with  $\alpha_s \in \M_{m,N_sn}$, $\beta \in \M_{N_sn,m}$, and $D_s \in \M_{N_s}(\M_n(X))$   a diagonal matrix whose diagonal entries are $\big[\delta_{x^{\ell,s}_{ij}} - \delta_{y^{\ell,s}_{ij}}\big]_{ij}$, $1 \le \ell \le N_s$ and such that
\[
\n{\alpha_s} \n{\beta_s} \max_{1 \le \ell \le N_s} \n{[x^{\ell,s}_{ij} - y^{\ell,s}_{ij}]} < \n{\mu_s}_{\M_m(\cF^n(X))} + \varepsilon/2^s.
\]
Observe that $\lambda_s=\max_{1 \le \ell \le N_s} \n{[x^{\ell,s}_{ij} - y^{\ell,s}_{ij}]}>0$ because $\mu_s \not=0$.
Define an infinite diagonal matrix $D$ of $n \times n$ blocks whose entries are the concatenation of the finite sequences
\[
\lambda_s^{-1} [\delta_{x^{\ell,s}_{ij}} - \delta_{y^{\ell,s}_{ij}}]_{ij}, \quad 1 \le \ell \le N_s.
\]
Observe that   $D \in \M_{\infty}(\M_n(X))$ and 
\[
\n{D} = \sup_{s \in \N, 1 \le \ell \le N_s} \lambda_s^{-1}\n{[x^{\ell,s}_{ij} - y^{\ell,s}_{ij}]} = 1.
\]
Note that by rescaling, we may assume
\[
\n{\alpha_s} = \n{\beta_s} < \bigg( \frac{ \n{\mu_s}_{\M_m(\cF^n(X))} + \varepsilon/2^s }{ \lambda_s } \bigg)^{1/2}
\]
Now define scalar matrices
\[
\alpha =
\begin{bmatrix}
\lambda_1^{1/2}\alpha_1 &\lambda_2^{1/2}\alpha_2 &\lambda_3^{1/2}\alpha_3 & \cdots\\
\end{bmatrix}
\in \M_{m,\infty}
\]
and
\[
\beta = \begin{bmatrix}
\lambda_1^{1/2}\beta_1 &\lambda_2^{1/2}\beta_2 & &\lambda_3^{1/2}\beta_3 &\cdots\\
\end{bmatrix}^T \in \M_{\infty,m}.
\]
Observe that $\n{\alpha}, \n{\beta}<1$, and $\mu = \alpha \cdot D \cdot \beta$.
\end{proof}

\section{Differentiation in operator spaces}\label{SectionDiff}

We now  discuss Gateaux $\R$-differentiability of maps between operator spaces.   We point out that we will make use of classic results on differentiability of Lipschitz maps between Banach spaces (\cite{Mankiewicz1973}), and those results are only valid for Banach spaces over the reals.   

For the definition of Gateaux $\R$-differentiability, see Definition \ref{DefinitionGateauxDiff} above. We now recall its weak$^*$ version (cf.~\cite[Definition 14.2.17]{AlbiacKaltonBook}):

\begin{definition}\label{DefinitionGateauxDiffWeak}
Let $X$ and $Y$ be $\R$-Banach spaces. A map $f:X\to Y^*$  is \emph{ Gateaux $w^*$-$\R$-differentiable at $x\in X$} if for all $a\in X$ the limit 
\[D^*f_x(a)= w^*\dash\lim_{\lambda\to 0}\frac{f(x+\lambda a)-f(x)}{\lambda}\]
exists and the map $a\in X\mapsto D^*f_x(a)\in Y^*$ is $\R$-linear and  bounded.

\end{definition}

Given an  $\R$-Banach space $X$, we need an idea of ``almost everywhere'' $\R$-differentiability on $X$. If $X$ has finite dimension, the Lebesgue measure does the job: given  a linear isomorphism between  $X$ and $\R^{\mathrm{dim}(X)}$, the Lebesgue measure on $\R^{\mathrm{dim}(X)}$ induces  via this isomorphism a measure $\mu$ on $X$. A measure of this type  is called \emph{a Lebesgue measure on $X$}, and the notion of a  Borel subset $A\subset X$ having \emph{positive measure} (resp.  \emph{full measure}, \emph{finite measure}, or \emph{zero measure}) is independent of the isomorphism, hence well defined.

Since there is no Lebesgue measure on an infinite dimensional Banach space, the infinite dimensional case requires something different. For that, we use the notion of Haar null sets \cite[Definition 14.2.7]{AlbiacKaltonBook}.

\begin{definition} 
Let $X$ be a separable  $\R$-Banach  space and let $A\subset X$ be a Borel subset.
\begin{enumerate}
\item The set $A$ is  called \emph{Haar null} if there exists a Borel probability measure on $X$ such that $\mu(A+x)=0$ for all $x\in X$. 
\item If $A^\complement $ is Haar null, then $A$ is said to be \emph{Haar full}.
\end{enumerate}
\end{definition}

Notice that, for finite dimensional $\R$-Banach spaces, all the notions of ``null'' sets presented above coincide. Precisely, if $X$ has finite dimension,  a subset $A\subset X$ has measure zero with respect to a Lebesgue measure on $X$ if and only if $A$ is  Haar null \cite[Lemma 14.2.9]{AlbiacKaltonBook}.

The following well-known theorem will be essential for our goals. 

\begin{theorem}\label{ThmDIffInHaarNullSetBanachSp}
Let $X$ and $Y$ be $\R$-Banach spaces, and  $f:X\to Y$ be a Lipschitz map. If $X$ is separable, the following holds.
\begin{enumerate}
\item \emph{(}\cite[Theorem 4.5]{Mankiewicz1973}\emph{)}  If  $Y$ has the Radon-Nikodym property, then $f$ is Gateaux $\R$-differentiable at a Haar full subset of $ X$.

\item \emph{(}\cite[Theorem 3.2]{HeinrichMankiewicz1982}\emph{)} If $Y$ is the dual of a separable Banach space, then   $f$ is Gateaux $w^*$-$\R$-differentiable at a Haar full subset of   $ X$.
\end{enumerate}
\end{theorem}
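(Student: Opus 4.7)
The plan is to treat the two parts in sequence, following Mankiewicz's classical strategy, since the statement is attributed to the cited references. Because $X$ is already separable, the key is to manufacture a suitable probability measure on $X$ (e.g., a nondegenerate Gaussian measure $\mu$, as was already invoked in the proof of Proposition \ref{PropDensityGateauxDiff}) and to exploit the fact that any set that is $\mu$-null for \emph{every} such Gaussian $\mu$ is Haar null; the converse is also useful, so that pointwise a.e.\ statements with respect to such measures translate into statements on Haar full subsets. The universal strategy is then: (a) fix a countable dense family of directions, (b) establish one-dimensional differentiability along each direction on a Haar full set, (c) intersect countably many Haar full sets, and (d) show that on the intersection the resulting directional derivatives assemble into a bounded $\R$-linear map.

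For part (1), I would fix a countable dense subset $\{a_m\}_{m\in\N}\subset X$ and, for each $m$, foliate $X$ by affine lines parallel to $a_m$. The restriction of $f$ to each such line is an $\Lip(f)$-Lipschitz map from $\R$ to $Y$, so by the RNP of $Y$ it is differentiable at a.e.\ point of that line. A Fubini argument against $\mu$ then yields a Haar full subset $E_m\subset X$ on which the one-sided directional derivative $D_{a_m}f_x$ exists. Intersecting countably many such sets, $E=\bigcap_m E_m$ is Haar full and on $E$ all directional derivatives in the dense directions exist. The Lipschitz condition forces a uniform Lipschitz bound $\|D_{a_m}f_x\|\leq \Lip(f)\|a_m\|$, so by density one extends $a\mapsto Df_x(a)$ to all of $X$. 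The remaining step, to promote this into an $\R$-linear map (i.e., additivity in $a$), requires the classical two-dimensional Fubini trick: one applies the one-dimensional result on a dense two-parameter family of planes spanned by pairs $(a_m,a_{m'})$ and shows that, a.e., $D_{a_m+a_{m'}}f_x = D_{a_m}f_x + D_{a_{m'}}f_x$.

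For part (2), since $Y^*$ generally lacks RNP, I would reduce weak$^*$-differentiability to scalar-valued differentiability. Fix a countable dense set $\{y_k\}\subset Y$. For each $k$, the map $x\mapsto \langle f(x),y_k\rangle$ is a $\Lip(f)\|y_k\|$-Lipschitz real-valued function, and $\R$ trivially has RNP, so part (1) provides a Haar full set $E_k$ on which it is Gateaux $\R$-differentiable. Set $E=\bigcap_k E_k$, which is Haar full. For $x\in E$ and $a\in X$, the scalar functions $\lambda\mapsto \langle (f(x+\lambda a)-f(x))/\lambda,\,y_k\rangle$ converge as $\lambda\to 0$ for each $k$, and the difference quotients $(f(x+\lambda a)-f(x))/\lambda$ are bounded by $\Lip(f)\|a\|$ in $Y^*$. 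By Banach--Alaoglu together with density of $\{y_k\}$ in $Y$, these difference quotients have a unique weak$^*$ cluster point $D^*f_x(a)$, which is the desired weak$^*$ directional derivative; bounded $\R$-linearity in $a$ is then inherited from the scalar case, exactly as above.

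The main obstacle in both parts is the \emph{linearity} of $a\mapsto Df_x(a)$ (respectively $D^*f_x(a)$), rather than the existence of directional derivatives along a dense set; guaranteeing linearity forces the two-dimensional Fubini argument and a careful choice of the null-set framework (Aronszajn/Gauss/Haar null) so that countable intersections remain negligible. Once that hurdle is cleared, boundedness of the derivative follows immediately from the Lipschitz constant, and the Haar fullness of the good set is preserved under the countable intersections used throughout.
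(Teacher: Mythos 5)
The paper gives no proof of this theorem at all --- it is quoted verbatim from Mankiewicz \cite{Mankiewicz1973} and Heinrich--Mankiewicz \cite{HeinrichMankiewicz1982} as a known black box --- so there is no internal argument to compare yours against. Your sketch correctly reproduces the standard proofs from those references (one-dimensional differentiability along a countable dense set of directions, Fubini to get Haar full sets --- for which the simplest witness measure is Lebesgue measure on a segment in the given direction, though Gauss null also implies Haar null --- countable intersection, additivity via the two-dimensional Fubini trick, and for part (2) the reduction to scalar Lipschitz functions tested against a countable dense subset of the separable predual followed by a boundedness-plus-density argument to identify the weak$^*$ limit), so it is an acceptable substitute for the citation.
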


\begin{proposition}\label{PropComplRadeThm}
Let $X$  and  $Y$ be   $\R$-operator spaces and $f:X\to Y$ be a   Lipschitz map. Then, given $n\in\N$, 
\begin{enumerate}
\item\label{ItemDFBound} $\|Df_x\|_n\leq \Lip_n(f)$ for all $x\in X $ such that $Df_x$ exists, and
\item\label{ItemD*FBound} If $Y$ is a dual space, then $\|D^*f_x\|_{n}\leq \Lip_n(f)$ for all $x\in X $ such that $D^*f_x$ exists.
\end{enumerate}
Moreover, if $f$ is a   Lipschitz embedding, then
\begin{enumerate}\setcounter{enumi}{2}
\item\label{ItemDFInvBound} $\|(Df_x)^{-1}\|_{n}\leq \Lip_n(f^{-1})$ for all $x\in X $ such that $Df_x$ exists.
\end{enumerate}
\end{proposition}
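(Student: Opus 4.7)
The plan is to adapt the computation in Proposition \ref{PropComplRadeThm1} to an arbitrary target $Y$ and to the weak-$*$ and inverse settings. The central device is the constant matrix $\tilde x \in \M_n(X)$ defined by $(\tilde x)_{ij} = x$ for all $i,j$; for any $[a_{ij}] \in \M_n(X)$ this gives
\[
f_n(\tilde x + \lambda[a_{ij}]) - f_n(\tilde x) = \bigl[f(x+\lambda a_{ij}) - f(x)\bigr],
\]
while $\|\tilde x + \lambda[a_{ij}] - \tilde x\|_{\M_n(X)} = \lambda\|[a_{ij}]\|_{\M_n(X)}$. Dividing by $\lambda$ and letting $\lambda \to 0$ will produce $(Df_x)_n([a_{ij}])$ or $(D^*f_x)_n([a_{ij}])$ in the appropriate topology.

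For part (1), entry-wise norm convergence in $\M_n(Y)$ implies norm convergence there (one checks that the operator norm is bounded by $n\cdot\max_{ij}\|c_{ij}\|$), so the difference quotients converge in norm to $(Df_x)_n([a_{ij}])$. Passing the inequality $\|f_n(\tilde x + \lambda[a_{ij}]) - f_n(\tilde x)\|_{\M_n(Y)} \le \Lip(f_n)\,\lambda\|[a_{ij}]\|_{\M_n(X)}$ through the limit then gives the bound $\|(Df_x)_n([a_{ij}])\|_{\M_n(Y)} \le \Lip_n(f)\,\|[a_{ij}]\|_{\M_n(X)}$.

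For part (2), since the difference quotients are uniformly bounded in $\M_n(Y)$ and converge entry-wise weak-$*$ to $[D^*f_x(a_{ij})]$, they converge weak-$*$ in $\M_n(Y) = \M_n(Z^*)$ (writing $Y = Z^*$; the entry-evaluation functionals $[y_{ij}] \mapsto y_{i_0j_0}(z_0)$ with $z_0 \in Z$ are weak-$*$ continuous on $\M_n(Y)$ and separate points on norm-bounded sets). Weak-$*$ lower semicontinuity of the norm on the dual Banach space $\M_n(Y)$ combined with the same Lipschitz bound yields the analogous inequality via a $\liminf$.

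For part (3), since $(f^{-1})_n = (f_n)^{-1}$ both act entry-wise, we have $\Lip(f_n^{-1}) = \Lip_n(f^{-1})$. Applying this inverse Lipschitz bound to the pair $(\tilde x + \lambda[a_{ij}], \tilde x)$ yields
\[
\lambda\|[a_{ij}]\|_{\M_n(X)} \le \Lip_n(f^{-1})\,\|f_n(\tilde x + \lambda[a_{ij}]) - f_n(\tilde x)\|_{\M_n(Y)};
\]
dividing by $\lambda$ and taking $\lambda \to 0$ gives $\|[a_{ij}]\|_{\M_n(X)} \le \Lip_n(f^{-1})\,\|(Df_x)_n([a_{ij}])\|_{\M_n(Y)}$, showing that $(Df_x)_n$ is bounded below, so its inverse on its range has operator norm at most $\Lip_n(f^{-1})$, which is the desired bound on $\|(Df_x)^{-1}\|_n$. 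The main technical point is the weak-$*$ identification in (2), but this is standard once one recalls the natural predual structure on $\M_n(Y)$.
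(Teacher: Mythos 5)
Your proposal is correct and follows essentially the same route as the paper: the same constant-matrix trick $\tilde x$ reduces everything to the Lipschitz bounds for $f_n$, parts (1) and (3) pass those bounds through the norm limit of the difference quotients, and part (2) is handled by pairing against matrices over the predual (your appeal to weak-$*$ lower semicontinuity of the norm on $\M_n(Z^*)=\CB(Z,\M_n)$ is exactly the paper's explicit supremum over $[b_{pq}]\in B_{\M_k(Z)}$). No gaps.
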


\begin{proof}
\eqref{ItemDFBound} and \eqref{ItemDFInvBound}
Fix $x\in X$ so that $Df_x$ exists. Given $[a_{ij}]\in \M_n(X)$,    we have
\begin{align*}
\|Df_x([a_{ij}])\|_{\M_n(Y)} &= \lim_{\lambda\to 0}\Big\|\Big[  \frac{f(x+\lambda a_{ij})-f(x)}{\lambda}\Big]\Big\| _n\\
&= \lim_{\lambda\to 0}\frac{1}{\lambda}\cdot \Big\|  f_n([x+\lambda a_{ij}])-f_n([x])\Big\|_n.
\end{align*}
Since 
\[|\lambda| \frac{\|[a_{ij}]\|_{\M_n(X)}}{\Lip_{n}(f^{-1})}\leq \Big\|  f_n([x+\lambda a_{ij}])-f_n([x])\Big\|_n\leq|\lambda|\Lip_n(f) \|[a_{ij}]\|_{\M_n(X)},\]
it follows that $\|Df_x\|_n\leq \Lip_n(f)$ and   $\|(Df_x)^{-1}\|_{n}\leq \Lip_n(f^{-1})$.

\eqref{ItemD*FBound} Say $Y=Z^*$. Fix $x\in X$ so that $D^*f_x$ exists. Fix   $[a_{ij}]\in \M_n(X)$. So
$D^*f_x([a_{ij}])\in \M_n(Z^*)=\CB(Z,\M_n)$, and we have
\begin{align*}
\|D^*&f_x([a_{ij}])\|_{ \M_n(Z^*)}\\
 &=\sup\Big\{\|[D^*f_x(a_{ij})(b_{pq})]\|_{nk}  \mid k\in\N, [b_{pq}]\in B_{\M_k(Z)}\Big\}    \\
&=\sup\Big\{\lim_{\lambda\to 0}\Big\|\Big[\Big\langle b_{pq}, \frac{f(x+\lambda a_{ij})-f(x)}{\lambda}\Big\rangle\Big]\Big\| _{nk} \mid k\in\N, [b_{pq}]\in B_{\M_k(Z)}\Big\}.  
\end{align*}
If $[b_{pq}]\in \M_k(Z)$, the map $\mathbf b:Z^*\to \M_k$ given by $b(y)=[\langle b_{pq}, y\rangle]$ for all $y\in Z^*$ is a linear map, so
\begin{align*}
\|D^*f_x([a_{ij}])\|_{ \M_n(Z^*)}
&\leq \lim_{\lambda\to 0}\Big\|\Big[ \frac{f(x+\lambda a_{ij})-f(x)}{\lambda}\Big]\Big\|_n\\
&=\lim_{\lambda\to 0}\frac{1}{\lambda}\cdot \Big\|  f_n([x+\lambda a_{ij}])-f_n([x])\Big\|_n\\
&\leq \Lip_n(f)\cdot \|[a_{ij}]\|_n.  
\end{align*}
Hence, $\|D^*f_x\|_{n}\leq  \Lip_{n}(f)$.
\end{proof}

If $f:X\to Y^*$ is a Lipschitz embedding, we would like to obtain   an upper bound for $\|(D^*f_x)^{-1}\|_{n}$ similarly as we did in Proposition \ref{PropComplRadeThm}\eqref{ItemDFInvBound}. Unfortunately, the norm of $Y^*$ is only lower semicontinuous, so the arguments above are not enough to give us that $\|D^*f_x([a_{ij}])\|_{\M_n(Y^*)}\geq (\Lip_n(f^{-1}))^{-1}$ for all $[a_{ij}]\in \partial B_{\M_n(X)}$. A more sophisticated argument is needed.

\begin{proposition}\label{PropBorelSigmaAlgDual}
Let $X$ be a separable $\R$-Banach space and  $Y\subset X^*$ be a separable subspace. Then the Borel $\sigma$-algebra on $Y$  generated by the norm open subsets of $Y$ and the Borel $\sigma$-algebra generated by the weak$^*$ open subsets of $Y$ are the same.
\end{proposition}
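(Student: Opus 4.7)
The plan is to prove the two inclusions separately. The first one is immediate: the weak$^*$ topology on $Y$ (inherited from that on $X^*$) is coarser than the norm topology, so every weak$^*$ open subset of $Y$ is norm open, and hence the weak$^*$ Borel $\sigma$-algebra is contained in the norm Borel $\sigma$-algebra.

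For the reverse inclusion, I would first reduce to closed (or open) balls. Since $Y$ is norm separable, any norm open set in $Y$ is a countable union of norm open balls, so it is enough to check that every norm open ball in $Y$ is weak$^*$ Borel.

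The main step is to use that the norm on $X^*$ is weak$^*$ lower semi-continuous: if $\{x_n\}_{n\in\N}$ is a countable dense subset of the unit ball of $X$ (available since $X$ is separable), then for every $\varphi\in X^*$ one has $\|\varphi\|=\sup_n |\varphi(x_n)|$, and this is a countable supremum of weak$^*$ continuous functions. Consequently, for every $y_0\in Y$ and $r>0$, the set $\{y\in Y:\|y-y_0\|\le r\}$ is weak$^*$ closed in $Y$.

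Finally, writing the norm open ball as the countable union
\[
\{y\in Y:\|y-y_0\|<r\}=\bigcup_{n\in\N}\{y\in Y:\|y-y_0\|\le r-1/n\},
\]
we see that it is a countable union of weak$^*$ closed sets, hence weak$^*$ Borel. Combined with the previous paragraph, this gives the reverse inclusion and completes the proof. No real obstacle is expected here; the argument is essentially the standard fact that weak$^*$ lower semi-continuity of the norm plus norm separability of $Y$ forces the two Borel structures to coincide.
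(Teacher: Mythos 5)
Your proof is correct and follows essentially the same route as the paper's: both reduce, via norm separability of $Y$, to showing that closed balls are weak$^*$ Borel, and both witness this by writing the norm as a countable supremum $\|\varphi\|=\sup_n|\varphi(x_n)|$ over a dense sequence $(x_n)$ in $B_X$. The only cosmetic difference is that the paper normalizes to the unit ball $B_Y$ while you treat balls with arbitrary center and radius directly.
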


\begin{proof}
Clearly, every weak$^*$ Borel subset of $Y$ is norm Borel. Since $Y$ is separable, every open subset of it is the countable union of open balls in $Y$. Since open balls are countable unions of closed balls, we only need to show that $B_{Y}$ is weak$^*$ Borel in $Y$. Let $(x_n)_n$ be a dense sequence in $B_X$, and for each $n\in\N$ let $V_n=\{x^*\in X^*\mid |x^*(x_n)|\leq 1\}$. Each $V_n$ is weak$^*$ closed in $X^*$, hence weak$^*$ Borel in $X^*$.  Since $B_{Y}=Y\cap \bigcap_nV_n$, it follows that $B_{Y}$ is weak$^*$ Borel in $Y$.
\end{proof}

We can now prove the main tool we need in order to obtain Theorem \ref{ThmCompLipEmbDualSpImpliesCompEmbBOUND:COROLLARY}. The following is an adaptation of   \cite[Lemma 3.1]{HeinrichMankiewicz1982} to the operator space setting; we point out that its main difference from the classic setting lies in  Subclaim \ref{SubclaimContOnF}. 

 \begin{proposition} \label{PropD*FInvBound}
Let $X$ be a separable $\R$-operator space, $Y$ be  a separable   $\R$-operator space, $n\in\N$, and $f:X\to Y^*$ be an $n$-Lipschitz embedding. Then   $\|(D^*f_x)^{-1}\|_{n}\leq \Lip_n(f^{-1})$  at a Haar full subset of   $  X $.
\end{proposition}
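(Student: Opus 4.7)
The proof adapts \cite[Lemma 3.1]{HeinrichMankiewicz1982} to the matrix-amplification setting. Since $Y^*$ is the dual of a separable Banach space, Theorem \ref{ThmDIffInHaarNullSetBanachSp}(2) produces a Haar full Borel set $A_0 \subset X$ on which $D^*f_x$ exists, and Proposition \ref{PropComplRadeThm}(2) already supplies the upper bound $\|D^*f_x\|_n \leq \Lip_n(f)$; only the lower bound $\|(D^*f_x)^{-1}\|_n \leq \Lip_n(f^{-1})$ remains. Because each amplification $(D^*f_x)_n : \M_n(X) \to \M_n(Y^*)$ is bounded and linear and $\M_n(X)$ is separable, after fixing a countable dense subset $D \subset \M_n(X)$ it suffices to show that for every $a \in D$ the set
\[
C_a \;=\; \big\{x \in A_0 \,:\, \|(D^*f_x)_n(a)\|_{\M_n(Y^*)} < \|a\|_{\M_n(X)}/\Lip_n(f^{-1})\big\}
\]
is Haar null; intersecting the Haar full complements over $a \in D$ and invoking the continuity of $(D^*f_x)_n$ then extends the bound to every $a \in \M_n(X)$.

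Fix $a \in D$ and let $\xi_x \in \M_n(X)$ denote the matrix whose entries are all equal to $x$. Define a Lipschitz curve $\gamma_x : \R \to \M_n(Y^*)$ by $\gamma_x(s) = f_n(\xi_x + sa) = [f(x + s a_{ij})]_{ij}$. Since $f$ is an $n$-Lipschitz embedding, $(f_n)^{-1} = (f^{-1})_n$ and hence $\Lip(f_n^{-1}) = \Lip_n(f^{-1})$, yielding the lower bound $\|\gamma_x(t) - \gamma_x(s)\|_{\M_n(Y^*)} \geq |t-s|\, \|a\|_{\M_n(X)}/\Lip_n(f^{-1})$. The target $\M_n(Y^*)$ is canonically a dual Banach space with a separable predual, so the one-dimensional case of Theorem \ref{ThmDIffInHaarNullSetBanachSp}(2) applied to $\gamma_x$ gives $w^*$-differentiability at Lebesgue-a.e. $s \in \R$; an entrywise $w^*$-limit computation in $Y^*$ shows that $\gamma_x'(0) = (D^*f_x)_n(a)$ whenever $x \in A_0$. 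Combining the Bochner-type inequality $\|\gamma_x(t) - \gamma_x(s)\| \leq \int_s^t \|\gamma_x'(u)\|\, du$ with the lower bound above produces $(t-s)^{-1} \int_s^t \|\gamma_x'(u)\|_{\M_n(Y^*)}\, du \geq \|a\|/\Lip_n(f^{-1})$ for every $s < t$, and Lebesgue differentiation then yields $\|\gamma_x'(u)\|_{\M_n(Y^*)} \geq \|a\|/\Lip_n(f^{-1})$ for Lebesgue-a.e. $u \in \R$, for every $x \in X$.

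The remaining step, and the main obstacle, is to transfer this ``a.e.\ along one-dimensional sections'' information into a Haar null statement for $C_a$; this is precisely where the operator-space setting diverges from the classical one and where Subclaim \ref{SubclaimContOnF} enters. Concretely, the plan is to fix a nonzero direction $v \in X$, form the Borel probability measure $\nu_v$ obtained by pushing Lebesgue measure on $[0,1]$ forward along $s \mapsto sv$, and establish $\nu_v(C_a + y) = 0$ for every $y \in X$ by a Fubini argument applied, for each fixed base point, to the two-parameter Lipschitz map $(s, t) \mapsto f_n(\xi_{x + sv} + ta)$. The delicate point is that in the classical case ($n = 1$) a shift of the base point by $sv$ is literally a reparametrization of the one-dimensional curve $\gamma_x$, whereas here varying $x$ by $sv$ acts simultaneously on every entry of $\xi_x$, so one must relate the value $(D^*f_{x + sv})_n(a) = \gamma_{x + sv}'(0)$ to the a.e.\ lower bound on $\gamma_{x + sv}'(u)$ precisely at $u = 0$. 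Subclaim \ref{SubclaimContOnF} will supply the required continuity of $y \mapsto \|(D^*f_y)_n(a)\|_{\M_n(Y^*)}$ along lines in $A_0$, with its proof relying on Proposition \ref{PropBorelSigmaAlgDual} to reconcile the $w^*$-Borel and norm-Borel structures on the separable subspace of $Y^*$ generated by the relevant values of $f$. Once this continuity is in hand, standard Fubini arguments combined with an appropriate choice of witness measure for the Haar nullity of $A_0^c$ show that $\nu_v(C_a + y) = 0$ for every $y$, establishing that $C_a$ is Haar null and completing the proof.
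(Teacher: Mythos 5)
Your setup is sound as far as it goes: the reduction to a countable dense set $D\subset\M_n(X)$, the curve $\gamma_x(s)=[f(x+sa_{ij})]$, the lower bound $\|\gamma_x(t)-\gamma_x(s)\|\geq |t-s|\|a\|/\Lip_n(f^{-1})$, and the a.e.\ estimate $\|\gamma_x'(u)\|\geq\|a\|/\Lip_n(f^{-1})$ obtained from the weak$^*$ fundamental theorem of calculus and Lebesgue differentiation all check out, and they correspond to the integral estimate the paper extracts via the scalar function $\varphi$. You have also correctly diagnosed the essential obstruction: for $u\neq 0$ the entries of $\gamma_x'(u)$ are derivatives of $f$ at the $n^2$ \emph{distinct} base points $x+ua_{ij}$, so the a.e.\ lower bound along the curve says nothing directly about the amplified derivative $(D^*f_y)_n(a)=\gamma_y'(0)$, which is only seen at the single parameter $u=0$.

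The problem is that your third paragraph, where this obstruction must be overcome, is a plan rather than a proof, and the plan as stated would not go through. First, invoking ``Subclaim \ref{SubclaimContOnF}'' is circular: that subclaim is an internal step of the very argument you are asked to supply, and what it actually delivers (via Lusin's theorem) is continuity of $y\mapsto D^*f_y(a_{ij})$ only on a closed subset $F$ of \emph{positive Lebesgue measure inside a finite-dimensional space}, not ``along lines in $A_0$.'' Second, your proposed witness measure $\nu_v$, supported on a single line in an arbitrary direction $v$, cannot carry the needed density argument: to conclude that $\gamma_x'(u)$ is close to $(D^*f_x)_n(a)$ for most small $u$, one must know that all $n^2$ translates $x+ua_{ij}$ simultaneously land in the continuity set $F$ for most $u$, which is a Lebesgue-point statement in the $n^2$ directions $a_{ij}$; a one-dimensional measure in the direction $v$ gives no control over displacements in the directions $a_{ij}$, and ``standard Fubini'' does not bridge this. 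The paper's proof resolves this by (i) first proving the statement for finite-dimensional $X$ with Lebesgue measure, running a contradiction argument that combines a Lusin continuity set $F$, the set $E$ of points where $0$ is a Lebesgue point of $t\mapsto\chi_F(x+ta_{ij})$ for every $i,j$ (whose complement in $N$ is null by Fubini applied direction by direction), and the integral estimate; and (ii) then reducing the infinite-dimensional case to finite-dimensional subspaces $X_k$ following the scheme of \cite[Theorem 14.2.19]{AlbiacKaltonBook}. Neither the simultaneous-directions Lebesgue-point argument nor the finite-to-infinite-dimensional reduction appears in your proposal, so the central step of the proof is missing.
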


\begin{proof}
Let $W=\{x\in X\mid f\text{ is Gateaux }\R\text{-differentiable at }x\}$. By  Theorem \ref{ThmDIffInHaarNullSetBanachSp}, $ W$ is Haar full. We first prove the proposition with the further assumption that   $X$ has finite dimension, in particular,  $X$ has a Lebesgue measure  and $W$ has full Lebesgue measure.	   To simplify notation, let $s=(\Lip_{n}(f^{-1}))^{-1}$.   Given  $[a_{ij}]\in \partial B_{\M_n(X)}$, let
\[Q_{[a_{ij}]}=\Big\{x\in W\mid \|D^*f_x([a_{ij}])\|_{\M_n(Y)}\geq s\Big\}.\]

\begin{claim}
For all $[a_{ij}]\in \partial B_{\M_n(X)}$, the set $W\setminus Q_{[a_{ij}]}$ has measure zero.
\end{claim}

\begin{proof}
Suppose not and fix an offender, say  $[a_{ij}]\in\partial B_{\M_n(X)}$. Pick $m\in\N$ such that 
\[N=\Big\{x\in W\mid \|D^*f_x([a_{ij}])\|_{\M_n(Y)}\leq s\Big(1-\frac{1}{m}\Big)\Big\}\]
has positive measure.

\begin{subclaim}\label{SubclaimContOnF}
There exists  $F\subset N$ with positive measure  so that    the map 
\[\Theta_{ij}:x\in F\mapsto D^*f_x(a_{ij})\in Y^*\] is continuous for all $i,j\in \{1,\ldots, n\}$.
\end{subclaim}

\begin{proof}
Consider the map 	  
\[\Lambda:x\in N\mapsto [D^*f_x(a_{ij})]\in   \M_n(Y^*).\] Since $\Lambda$ is given by a derivative, by endowing $\M_n(Y^*)$ with the topology given by the weak$^*$ topology of $Y^*$, it follows that $\Lambda$ is  a Baire class 1 function, i.e., $\Lambda$ is the pointwise (weak$^*$) limit of continuous functions.	In particular,   $\Lambda$ is measurable with respect to the Borel $\sigma$-algebra of $\M_n(Y^*)$ generated by the  weak$^*$ topology. Since $X$ is separable, there exists a separable   subspace $Z\subset Y^*$ such that $\mathrm{Im}(\Lambda)\subset Z$. Let $\|\cdot\|_\infty$ be the supremum  norm on $\M_n(Y^*)$, i.e., $\|[b_{ij}]\|_\infty=\max_{i,j}\|b_{ij}\|$ for all $[b_{ij}]\in \M_n(Y^*)$.  Proposition \ref{PropBorelSigmaAlgDual} implies that $\Lambda$ is also measurable as a map into $(\M_n(Z),\|\cdot\|_\infty)$. 

Fix $r>0$ such that $(r\cdot B_{X})\cap N$ has positive (finite) measure.  A theorem of Lusin (see \cite[Theorem 17.12]{KechrisBook1995})  gives us an $F\subset (r\cdot B_{X})\cap N$ with positive measure so that $\Lambda\restriction F:F\to (\M_n(Z),\|\cdot\|_\infty)$ is continuous. Since the coordinate projections  $\pi_{ij}:\M_n(Z)\to Y^*$ are continuous, we are done. 
\end{proof}

Let $F\subset N$ be   given by Subclaim \ref{SubclaimContOnF}. Given $x\in X$ and $i,j\in \{1,\ldots, n\}$, define a map $\psi_{x,i,j}:\R\to \R$ by 
\[\psi_{x,i,j}(t)=\chi_{F}(x+ta_{ij})\]
for all $t\in\R$, where $\chi_F$ is the characteristic function of $F$.

\begin{subclaim}\label{Subclaim}
Let \[E=\bigcap_{i,j\in \{1,\ldots,n\}}\Big\{x\in N\mid 0\text{ is a Lebesgue point of }\psi_{x,i,j}\Big\}.\] Then $N\setminus E$ has measure zero.
\end{subclaim}

\begin{proof}
Let $i,j\in\{1,\ldots, n\}$, $x_0\in X$, and $L=\{x_0+ta_{ij}\mid t\in \R\}$.  Since for an integrable map almost every point is a Lebesgue point, letting 
  \[E_{i,j,x_0}=\Big\{x_0+ta_{ij}\in N\cap L\mid t\text{ is a Lebesgue point of }\psi_{x_0,i,j}\Big\},\]
we have   that $  L\setminus E_{i,j,x_0}$ has measure zero as a subset of $L$.

 For each $i,j\in \{1,\ldots,n\}$, let 
\[E_{i,j}=\Big\{x\in N\mid 0\text{ is a Lebesgue point of }\psi_{x,i,j}\Big\}.\]
Notice that $0$ is a Lebesgue point of $\psi_{x,i,j}$ if and only if $t$ is a Lebesgue point of $\psi_{x_0,i,j}$, where $x=x_0+ta_{ij}$. Therefore, the previous paragraph and Fubini's theorem imply that $  L\setminus E_{i,j}$ has measure zero as a subset of $L$ for all lines $L\subset X$ in the direction of $a_{ij}$. Hence $N\setminus E_{i,j}$ has measure zero   for all $i,j\in \{1,\ldots, n\}$, which implies that  $N\setminus E$ has measure zero.
\end{proof}

By  Subclaim \ref{Subclaim}, we can pick $x\in F\cap E$.  By  our choice of $F$, there exists $\delta>0$ such that for all $i,j\in \{1,\ldots, n\}$ we have
\[\Big \|D^*f_x(a_{ij})-D^*f_{x+y}(a_{ij})\Big\|<\frac{s}{4mn^2}\]
for all $y\in X$ with $\|y\|\leq \delta$ and $x+y\in F$. In particular,
\begin{align*}
\|[D^*&f_{x+ta_{ij}}(a_{ij})]\|_{\M_n(Y^*)}\\
&\leq \|[D^*f_{x}(a_{ij})]\|_{\M_n(Y^*)}+ \|[D^*f_{x}(a_{ij})]-[D^*f_{x+ta_{ij}}(a_{ij})]\|_{\M_n(Y^*)}\\
&\leq\|[D^*f_{x}(a_{ij})]\|_{\M_n(Y^*)}+ \sum_{i,j\in \{1,\ldots,n\}}\|D^*f_{x}(a_{ij})-D^*f_{x+ta_{ij}}(a_{ij})\|\\
&\leq s\Big(1-\frac{1}{m}\Big)+\frac{s}{4m}\\
&=s\Big(1-\frac{3}{4m}\Big)
\end{align*}
for all $t\in \R$ with $t\leq \delta$ and $x+ta_{ij}\in F$ for all $i,j\in \{1,\ldots,n\}$.

By the definition of Lebesgue point, there exists $\eps>0$ such that 
  the Lebesgue measure of 
\[A=\bigcap_{i,j\in \{1,\ldots,n\}}\Big\{t\in (0,\eps)\mid  x+ta_{ij}\in F\Big\}\]
is at least $\eps(1-s/(4m\Lip_n(f)))$. By replacing $\eps$ by a smaller positive number if necessary, assume that $\eps\leq \delta$. Let $B=(0,\eps)\setminus A$, so $B$ has measure at most $\eps s/(4m\Lip_n(f))$.

To simplify notation, let $\mathbf b=[f(x+\eps a_{ij})-f(x)]\in \CB(Y,\M_n)$. Since 
\[\|[f(x+\eps a_{ij})-f(x)]\|_n=\|f_n([x+\eps a_{ij}])-f_n([x])\|_n\geq s\eps,\]
there exists $k\in\N$ such that $\|\mathbf b_k\|_k>s\eps(1-1/(2m))$, so we can pick $[b_{pq}]\in B_{\M_k(Y)}$ such that $\|\mathbf b_k([b_{pq}])\|>s\eps(1-1/(2m))$.  Pick $\bar\xi=(\xi_i)_{i=1}^{nk},\bar\zeta=(\zeta_{i})_{i=1}^{nk}\in \R^{\oplus nk}$ such that 
\begin{enumerate}
\item $\sum_{i=1}^{nk}\|\xi_i\|^2=\sum_{i=1}^{nk}\|\zeta_i\|^2=1$, and
\item $\langle\mathbf b_k([b_{pq}])\bar\xi,\bar\zeta\rangle>s\eps(1-1/(2m))$.
\end{enumerate}

Define $\varphi:(0,\eps)\to \R$ by letting
\[\varphi(t)=\Big\langle f_n([x+t a_{ij}])([b_{pq}])\bar\xi,\bar\zeta\Big\rangle\]
for all $t\in (0,\eps)$. Clearly,   $\Lip(\varphi)\leq \Lip_n(f)$. Therefore,  Rademacher's Theorem implies that  $\varphi'(t)$ exists almost everywhere, and we have that $|\varphi'(t)|\leq \Lip_n(f)$ and \[\varphi'(t)=\langle [D^*f_{x+ta_{ij}}(a_{ij})]([b_{pq}])\bar\zeta,\bar \xi\rangle\] for all such $t$.  Hence, we have that
\begin{align*}
s\eps\Big(1-\frac{1}{2m}\Big)&<\varphi(\eps)-\varphi(0)\\
&=\int_{(0,\eps)}\varphi'(t)dt\\
&= \int_{A}\varphi'(t)dt+\int_{B}\varphi'(t)dt\\
&\leq \int_{A}\|[ D^*f_{x+ta_{ij}}(a_{ij})]\|_{\M_n(Y^*)}dt+\frac{s\eps}{4m}\\
&\leq s\eps\Big(1-\frac{3}{4m}\Big)+\frac{s\eps}{4m}\\
&= s\eps\Big(1-\frac{1}{2m}\Big);
\end{align*} 
a contradiction.
\end{proof}

Since $X$ is separable,   we can pick    $D\subset \partial B_{\M_n(X)}$ which is countable and dense. Since each $Q_{[a_{ij}]}$ has full measure, the same holds for
\[Q=\bigcap_{[a_{ij}]\in D}Q_{[a_{ij}]}.\]
By the continuity of each $Df_x:X\to Y^*$, it follows that $\|Df_n([a_{ij}])\|_{\M_n(Y^*)}\geq s$ for all $n\in\N$ and all $[a_{ij}]\in \partial B_{\M_n(X)}$. So $\|(Df_x)^{-1}\|_n\leq s^{-1}$ for all $x\in Q$, and this completes the proof of the proposition in the case $X$ has finite dimension. 

We now deal with the infinite dimensional case. For that, we follow the proof of \cite[Theorem 14.2.19]{AlbiacKaltonBook}. Let $(X_k)_k$ be an increasing sequence of finite dimensional subspaces of $X$ so that $\bigcup_kX_k$ is dense in $X$. For each $k\in\N$, let $D_k$ be the set of all $x\in X$ such that there exists a linear operator $T^k:X_k\to Y^*$ such that
\begin{enumerate}
\item for all $a\in X_k$  and all $y\in Y$ we have \[T^k(a)(y)=\lim_{\lambda\to 0}\frac{f(x+ta)(y)-f(x)(y)}{\lambda},\]
and 
\item $s\|[a_{ij}]\|_{\M_n(X)}\leq \|T^k_n([a_{ij}])\|_{\M_n(Y^*)}$ for all $n\in\N$ and all $[a_{ij}]\in \M_n(X)$.
\end{enumerate}
For each $k\in\N$, let $A_k=X\setminus D_k$.

For each $z\in X$, define $f_z:X\to Y^*$ by $f_z(x)=f(x-z)$. So, for each $k\in\N$, the result for finite dimensional domains implies that the set
\begin{align*}
(z&+ A_k)\cap X_k\\
&=\Big\{x\in X_k\mid f_z\restriction X_k \text{ is not }w^*\text{-Gateaux  differentiable}\\
&\ \ \ \ \ \ \ \ \ \ \ \ \ \ \ \ \ \ \ \ \ \ \ \ \text{ at }x\text{ or }\|(D^*f_z\restriction X_k)^{-1}\|>s^{-1}\Big\}
\end{align*}
has Lebesgue measure zero for all $z\in X$. Hence, \cite[Lemma 14.2.9 and Lemma 14.2.12]{AlbiacKaltonBook} imply that $\bigcup_kA_k$ is Haar null. 

One can easily prove (see \cite[Theorem 14.2.13]{AlbiacKaltonBook}) that $\bigcap_nD_n$ equals the set of all $x\in X$ such that $f$ is Gateaux $w^*$-differentiable at $x$ and $\|(D^*f_z\restriction X_k)^{-1}\|_n\leq s^{-1}$ --- we leave the details to the reader ---, so we are done. 
\end{proof}

We will now use Proposition \ref{PropD*FInvBound} to show that almost complete Lipschitz embeddability of a separable $\K$-operator space into a dual $\K$-operator space implies almost complete $\R$-linear embeddability. But before that, we need a result on   ultrapowers of operator spaces.

The proof of the following proposition is straightforward and we omit it (see also \cite[Sec. 2.8]{Pisier-OS-book}).

\begin{proposition}\label{PropMapUltraOfDualDualOfUltra}
Let $X$ be a $\K$-operator space and $ \cU$ an ultrafilter on some index set $I$. The map $j	:(X^*)^\cU\to (X^\cU)^*$   given by \[j\Big([(x^*(n))_n]\Big)\Big([(x(n))_n]\Big)=\lim_{n, \cU}x^*(n)\Big(x(n)\Big),\] for all $[(x(n))_n]\in X^\cU$ and all $[(x^*(n))_n]\in (X^*)^\cU$, is a complete isometry into $(X^\cU)^*$.\qed
\end{proposition}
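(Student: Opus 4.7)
The plan is to verify well-definedness first and then to establish that each amplification $j_k$ is an isometry by reducing to a fixed matrix level via Smith's lemma. For well-definedness, given $[(x(n))_n]\in X^\cU$ and $[(x^*(n))_n]\in (X^*)^\cU$, the scalar sequence $(x^*(n)(x(n)))_n$ is bounded, so its ultralimit exists in $\C$. Independence from the choice of representatives follows from the estimate
\[|x^*(n)(x(n))-y^*(n)(y(n))|\leq \|x^*(n)\|\,\|x(n)-y(n)\|+\|x^*(n)-y^*(n)\|\,\|y(n)\|,\]
which goes to zero along $\cU$ whenever $(x^*(n))_n\sim(y^*(n))_n$ and $(x(n))_n\sim(y(n))_n$. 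Linearity of $j([(x^*(n))_n])$ as a functional on $X^\cU$ is routine, and the displayed bound also shows $\|j([(x^*(n))_n])\|\leq \|[(x^*(n))_n]\|$, so $j$ maps into $(X^\cU)^*$.

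For the complete isometry statement, I will use the identification $\M_k((X^\cU)^*)\cong \CB(X^\cU,\M_k)$. Writing $T(n)\in \CB(X,\M_k)$ for the map corresponding to $[x^*_{ij}(n)]\in \M_k(X^*)$ under $\M_k(X^*)\cong\CB(X,\M_k)$, one checks that $j_k([(x^*_{ij}(n))_n])$ is the cb-map $X^\cU\to \M_k$ sending $[(x(n))_n]$ to $\lim_{n,\cU}T(n)(x(n))$, the limit being taken in the finite-dimensional space $\M_k$. A direct calculation of the $k$-th amplification shows that for any $[(y_{ab}(n))_n]\in \M_k(X^\cU)$,
\[j_k\big([(x^*_{ij}(n))_n]\big)_k\big([(y_{ab}(n))_n]\big)=\lim_{n,\cU}T(n)_k\big([y_{ab}(n)]\big)\in \M_{k^2},\]
and since the norm is continuous on the finite-dimensional space $\M_{k^2}$, the norm of this image equals $\lim_{n,\cU}\|T(n)_k([y_{ab}(n)])\|$.

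The upper bound $\|j_k([(x^*_{ij}(n))_n])\|_{\cb}\leq \|[(x^*_{ij}(n))_n]\|$ is then immediate from the definition of the ultrapower norms together with Smith's lemma \cite[Proposition 1.12]{Pisier-OS-book}, which tells us that the cb-norm of any linear map into $\M_k$ equals the norm of its $k$-th amplification. For the reverse inequality, fix $\eps>0$; Smith's lemma supplies, for each $n$, a matrix $[y_{ab}(n)]\in B_{\M_k(X)}$ with $\|T(n)_k([y_{ab}(n)])\|\geq \|T(n)\|_{\cb}-\eps$. The element $[(y_{ab}(n))_n]$ lies in the unit ball of $\M_k(X^\cU)$, and the previous display yields
\[\|j_k([(x^*_{ij}(n))_n])_k([(y_{ab}(n))_n])\|=\lim_{n,\cU}\|T(n)_k([y_{ab}(n)])\|\geq \|[(x^*_{ij}(n))_n]\|-\eps.\]
One more application of Smith's lemma then gives $\|j_k([(x^*_{ij}(n))_n])\|_{\cb}\geq \|[(x^*_{ij}(n))_n]\|-\eps$, and letting $\eps\to 0$ concludes the proof. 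The only place where some care is needed is in ensuring that the optimizers chosen for each $T(n)$ live at a matrix level independent of $n$; this is precisely what Smith's lemma allows by fixing the level at $k$, so no genuine obstacle arises.
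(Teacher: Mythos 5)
Your proof is correct. The paper itself omits the proof of Proposition \ref{PropMapUltraOfDualDualOfUltra}, declaring it straightforward and pointing to \cite[Sec.~2.8]{Pisier-OS-book}, and your argument --- identifying $\M_k((X^\cU)^*)$ with $\CB(X^\cU,\M_k)$, computing $j_k$ as an ultralimit of the maps $T(n)_k$ at the fixed matrix level $k$, and using Smith's lemma to reduce the cb-norm to the $k$-th amplification on both sides --- is exactly the standard argument the authors have in mind, with the one genuinely delicate point (choosing near-optimizers at a level independent of $n$) correctly handled.
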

 
The following should be compared with \cite[Theorem 2.2]{Stern1978} (cf. \cite[Theorem 1.6]{HeinrichMankiewicz1982}).

\begin{proposition}\label{PropDownLoewenheinSkolenOpSp}
Let $X$ and $Y$ be $\R$-operator spaces with $X\subset Y$ and $X\neq \{0\}$. Then there exists an $\R$-operator space $Z$, a nonprincipal ultrafilter $\cU$ on some index set $\Gamma$, and a surjective complete isometry $v:Z^\cU\to Y^\cU$ such that
\begin{enumerate}
\item $X\subset  Z\subset Y$,
\item $\mathrm{dens}(Z)=\max\{\mathrm{dens}(X), \aleph_0\}$, and, 
\item letting $i: Z\to Y$ be the inclusion map, the following diagram commutes.
\[\xymatrix{ Z^\cU\ar[r]^v& Y^\cU\\
  Z\ar[r]_i\ar[u]^{I_{Z}}& Y\ar[u]_{I_{Y}}}
\]
\end{enumerate}
\end{proposition}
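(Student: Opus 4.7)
The proposition is a continuous-model-theoretic Löwenheim–Skolem result paired with a Keisler–Shelah style ultrapower isomorphism, adapting \cite{Stern1978,HeinrichMankiewicz1982} to the operator space category. My plan has three steps: (i) build $Z$ as a countable chain of small subspaces of $Y$ satisfying a Tarski–Vaught criterion at every matricial level; (ii) choose a good countably incomplete ultrafilter $\cU$ making $Z^\cU$ and $Y^\cU$ highly saturated; and (iii) apply a back-and-forth argument to promote the inclusion $i:Z\hookrightarrow Y$ to a complete isometric surjection $v:Z^\cU\to Y^\cU$.

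Set $\kappa=\max\{\mathrm{dens}(X),\aleph_0\}$ and construct a chain $Z_0\subseteq Z_1\subseteq\cdots\subseteq Y$ of closed subspaces of density $\leq\kappa$ with $X\subseteq Z_0$. Inductively, given $Z_n$, fix for each $k\in\N$ a norm-dense subset $D_{n,k}\subseteq \M_k(Z_n)$ of cardinality $\leq\kappa$, and let $Z_{n+1}$ be the closed linear span of $Z_n$ together with the scalar entries of witnesses chosen as follows: for every $(k,p,m)\in\N^3$, every $\bar a=(a_1,\dots,a_m)\in(D_{n,k})^m$, every tuple $(\alpha_0,\dots,\alpha_m,\beta_0,\dots,\beta_m)$ of rational-entried matrices with $\alpha_i\in \M_{p,k}(\Q)$, $\beta_i\in \M_{k,p}(\Q)$, and every rational $r>0$, if there exists $y\in \M_k(Y)$ with $\|\alpha_0 y\beta_0+\sum_i\alpha_i a_i\beta_i\|_{\M_p}<r$ then adjoin one such witness. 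Only $\kappa$-many requests arise, so $\mathrm{dens}(Z_{n+1})\leq\kappa$. Put $Z=\overline{\bigcup_n Z_n}^Y$; then $X\subseteq Z\subseteq Y$, $\mathrm{dens}(Z)=\kappa$, and $Z$ obeys the continuous Tarski–Vaught criterion at every matricial level, so $Z\prec Y$ in the continuous first-order theory of operator spaces.

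Next, pick a countably incomplete, $\kappa^+$-good ultrafilter $\cU$ on an index set $\Gamma$ of cardinality $\kappa$ (such ultrafilters exist by classical results of Keisler and Kunen). Countable incompleteness and goodness imply that $Y^\cU$ and $Z^\cU$ are $\kappa^+$-saturated as multi-sorted operator-space structures, the argument being the usual Banach-space one applied at each sort $\M_k$. Since $Z\prec Y$, the canonical embeddings make $i:Z\to Y$ into an elementary partial isomorphism between the ``constant'' parts of $Z^\cU$ and $Y^\cU$. A standard back-and-forth argument along a well-ordering of $Y^\cU$, using $\kappa^+$-saturation of $Z^\cU$ together with elementary equivalence $Z^\cU\equiv Y^\cU$, extends this partial isomorphism to a complete isometric bijection $v:Z^\cU\to Y^\cU$; by construction $v\circ I_Z=I_Y\circ i$, so the displayed diagram commutes.

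The main obstacle I anticipate is the correct formulation of the Tarski–Vaught criterion in Step 1. In the Banach space setting one only needs to match scalar norms $\|c_0y+\sum c_i a_i\|$, whereas here one must simultaneously control matricial norms $\|\alpha_0 y\beta_0+\sum\alpha_i a_i\beta_i\|_{\M_p}$ for all $p$ and all matrix-valued coefficients. The observation that rescues the cardinality bound is that we may restrict to rational coefficients and to parameters drawn from norm-dense subsets, yielding only $\kappa$-many conditions per stage. Once this bookkeeping is in place the remainder is routine continuous model theory, and in particular the transfer of $\kappa^+$-saturation from Banach-space ultrapowers to operator-space ultrapowers is a sort-by-sort verification.
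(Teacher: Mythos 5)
There is a genuine gap in your Step (iii). A back-and-forth argument between two $\kappa^+$-saturated elementarily equivalent structures produces an isomorphism only when both structures have density character at most $\kappa^+$; otherwise the recursion cannot exhaust them. Here $\kappa=\max\{\mathrm{dens}(X),\aleph_0\}$ is tied only to $X$, while $Y$ is completely arbitrary: $\mathrm{dens}(Y^\cU)$ can vastly exceed $\kappa^+$ when $\Gamma$ has cardinality $\kappa$, and even $\mathrm{dens}(Z^\cU)$ can reach $2^\kappa>\kappa^+$ in the absence of CH. So your argument yields at best a (completely isometric) elementary embedding $Z^\cU\to Y^\cU$, not a surjection. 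This is precisely the point where one must invoke the Keisler--Shelah isomorphism theorem --- elementarily equivalent structures have isomorphic ultrapowers with respect to a single suitable ultrafilter --- whose proof is far more delicate than a saturation count and whose index set is not controlled by $\kappa$ alone. Since the proposition places no constraint on $|\Gamma|$, citing Keisler--Shelah is both necessary and sufficient, and that is what the paper does.

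There is a second, smaller gap in Step (i): the Tarski--Vaught test requires witnessing existential/$\inf$ conditions for \emph{all} formulas with parameters in the current stage, including formulas with nested quantifiers, not only the quantifier-free matricial norm conditions $\|\alpha_0 y\beta_0+\sum_i\alpha_i a_i\beta_i\|_{\M_p}<r$ that you list. As written, your closure gives something weaker than $Z\prec Y$, and you need at least $Z\equiv Y$ to apply any ultrapower isomorphism theorem. The fix is routine (there are only countably many formulas over a countable language, so closing under approximate witnesses for all of them preserves the cardinality bound), but it must be done. For comparison, the paper sidesteps continuous logic entirely: it observes that $\Q$-operator spaces form a classical first-order axiomatizable class in a countable language, takes a dense $\Q$-subspace $X_0\subset X$, applies the classical downward L\"{o}wenheim--Skolem theorem to obtain $Z_0\supset X_0$ elementarily equivalent to $Y$ of the right cardinality, applies classical Keisler--Shelah to get $Z_0^\cU$ completely isometric to $Y^\cU$, and finally takes $Z=\overline{Z_0}$. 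Your hands-on construction could be made to work, but both the elementarity bookkeeping and, above all, the passage from elementary equivalence to an ultrapower isomorphism need to be repaired as indicated.
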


\begin{proof}
This proof uses model theory and we refer the reader to \cite{ChangKeislerBook} for details. Define  $\Q$-operator spaces as the $\Q$-vector subspaces of bounded operators on a Hilbert space. First notice that the class of $\Q$-operator spaces is an axiomatizable class on a countable language (cf. \cite[Section 5]{Stern1978} and \cite[Appendix B]{GoldbringSinclair2015}). Let $X_0\subset X$ be a  dense $\Q$-vector subspace of cardinality $\mathrm{dens}(X)$. By the Downwards L\"{o}ewenheim-Skolem Theorem, there exists a  $\Q$-operator space $Z_0\subset Y$ containing $X_0$ so that $| Z_0|=\max\{|X_0|, \aleph_0\}$ and $Z_0$ is elementarily equivalent to $Y$. Therefore, Keisler-Shelah's isomorphism theorem \cite[Theorem 6.1.15]{ChangKeislerBook} implies that there exists an index set $\Gamma$ and an ultrafilter $\cU$ on $\Gamma$ such that $ Z_0^\cU$ is completely isometric to $Y^\cU$. Let $  Z$ be the closure of $ Z_0$, so $\mathrm{dens}(  Z)=\max\{\mathrm{dens}(X), \aleph_0\}$ and $X\subset  Z\subset Y$. Moreover, it is straightforward to check that $ Z_0^\cU$ and $  Z ^\cU$ are completely isometric, so we are done.
\end{proof}

\begin{theorem}\label{ThmCompLipEmbDualSpImpliesCompEmbBOUND}
Let $X$ and $Y$ be   $\K$-operator spaces, let $n\in\N$, and assume that $X$ is separable.  If $\lambda>0$ and $X$   Lipschitzly embeds into $Y^*$ by  a map with $n$-th distortion at most $\lambda$, then $X$  $\R$-linearly    embeds into $Y^*$ by a map with $n$-th distortion at most $\lambda$.
\end{theorem}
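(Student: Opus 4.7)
The plan is to reduce the problem to a separable situation where the differentiability machinery of Propositions~\ref{PropComplRadeThm} and \ref{PropD*FInvBound} is available, and then to lift the resulting linear map back to $Y^*$ via the Löwenheim--Skolem construction of Proposition~\ref{PropDownLoewenheinSkolenOpSp} coupled with the canonical embedding of Proposition~\ref{PropMapUltraOfDualDualOfUltra}. Throughout I view all spaces as $\R$-operator spaces. By translating and rescaling, I may assume $f(0)=0$ and $\Lip_n(f)\cdot \Lip_n(f^{-1})\leq \lambda$.

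Since $X$ is separable, $W:=\overline{\Span}_\R(f(X))$ is a separable subspace of $Y^*$. A standard countable construction (pick enough near-norming matrices in $\M_m(Y)$ witnessing the $\M_n(Y^*)$-norm for a dense subset of $\M_n(W)$ at every $m$, then close the entries under $\Q$-linear combinations) produces a separable subspace $Y_0\subset Y$ such that the restriction map $Y^*\to Y_0^*$ is $n$-isometric on $W$. Applying Proposition~\ref{PropDownLoewenheinSkolenOpSp} to the pair $Y_0\subset Y$ yields a separable $\R$-operator space $Z$ with $Y_0\subset Z\subset Y$, a nonprincipal ultrafilter $\cU$, and a surjective complete isometry $v:Z^\cU \to Y^\cU$ satisfying $v\circ I_Z = I_Y\circ i$, where $i:Z\hookrightarrow Y$ denotes the inclusion. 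Since $Y_0\subset Z$, the restriction $r_Z:Y^*\to Z^*$ is still $n$-isometric on $W$, and hence $g:=r_Z\circ f:X\to Z^*$ has $\mathrm{Dist}_n(g)=\mathrm{Dist}_n(f)\leq \lambda$. With both $X$ and $Z$ separable, Theorem~\ref{ThmDIffInHaarNullSetBanachSp}(2) combined with Propositions~\ref{PropComplRadeThm}(2) and \ref{PropD*FInvBound} produces a point $x_0\in X$ at which $g$ is Gateaux $w^*$-$\R$-differentiable with $\|D^*g_{x_0}\|_n\leq \Lip_n(g)$ and $\|(D^*g_{x_0})^{-1}\|_n\leq \Lip_n(g^{-1})$. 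Set $T_0:=D^*g_{x_0}:X\to Z^*$, an $\R$-linear map whose $n$-th distortion is at most $\lambda$.

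The main obstacle is the last step: upgrading $T_0:X\to Z^*$ to an $\R$-linear map $T:X\to Y^*$ with the same $n$-th distortion. There is in general no completely isometric section of the complete quotient $r_Z:Y^*\to Z^*$, but the ultrapower machinery supplies one implicitly. Let $j_Z:Z^*\to (Z^\cU)^*$ be the composition of the constant-sequence embedding $Z^*\hookrightarrow (Z^*)^\cU$ with the complete isometry of Proposition~\ref{PropMapUltraOfDualDualOfUltra}, and let $\rho:=(v^{-1}\circ I_Y)^*:(Z^\cU)^*\to Y^*$, which is a complete quotient map as the adjoint of the complete isometric embedding $v^{-1}\circ I_Y:Y\to Z^\cU$. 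Define $T:=\rho\circ j_Z\circ T_0$. The crucial observation, made possible by the compatibility $v\circ I_Z=I_Y\circ i$, is that for each $\phi\in Z^*$ and each $z\in Z$,
\[
(\rho\circ j_Z)(\phi)(z)=j_Z(\phi)\bigl(v^{-1}(I_Y(z))\bigr)=j_Z(\phi)(I_Z(z))=\phi(z),
\]
so $(\rho\circ j_Z)(\phi)$ restricts to $\phi$ on $Z$, making it a genuine extension of $\phi$ from $Z$ to $Y$. Since $Z\subset Y$ is a complete isometric embedding, restriction to $Z$ can only decrease matrix norms, so $\|(\rho\circ j_Z)_n([\phi_{ij}])\|_{\M_n(Y^*)}\geq \|[\phi_{ij}]\|_{\M_n(Z^*)}$ at every matrix level; the reverse inequality is automatic because $\rho\circ j_Z$ is a complete contraction. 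Thus $\rho\circ j_Z:Z^*\to Y^*$ is in fact a complete isometry, so $T$ inherits the $n$-th distortion of $T_0$, finishing the proof.
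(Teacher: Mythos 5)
Your proof is correct and follows essentially the same route as the paper's: reduce to a separable predual $Z$ built from a countable norming subspace via Proposition \ref{PropDownLoewenheinSkolenOpSp}, differentiate $r_Z\circ f$ using Theorem \ref{ThmDIffInHaarNullSetBanachSp} together with Propositions \ref{PropComplRadeThm} and \ref{PropD*FInvBound}, and transport the derivative back into $Y^*$ via $\rho\circ j_Z$, which is precisely the paper's complete isometry $u=I_Y^*\circ(v^{-1})^*\circ j\circ I_{Z^*}$. The only cosmetic differences are that the paper isolates the separable-$Y$ case as a first step and composes the resulting linear embedding with $u$ afterwards, and that it handles $\K=\C$ by explicitly identifying $Y^*$ with a real dual (via $Y^*\cong(Y_\R\oplus Y_\R)^*$) before invoking the $w^*$-differentiability machinery, a point your reduction to $\R$-operator spaces leaves implicit.
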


\begin{proof}
Firstly, notice that we can assume $\K=\R$. Indeed, if $\K=\C$, then denote $X$ and $Y$ viewed as $\R$-operator spaces by $X_\R$ and $Y_\R$, respectively. Then we have canonical complete $\R$-isomorphisms $Y^*\cong Y_\R^*\oplus Y^*_\R\cong (Y_\R\oplus Y_\R)^*$. So we assume $\K=\R$ for the remainder of the proof. 

Let $f:X\to Y^*$ be a Lipschitz embedding. We first prove the result with the extra condition that $Y$ is separable, so assume $Y$ is separable. By Theorem \ref{ThmDIffInHaarNullSetBanachSp},  $f$ is Gateaux $w^*$-$\R$-differentiable at a Haar full subset of   $  X$ and $\|Df_x\|_n\leq\Lip_n(f)$ for all $x$'s in this subset. By Proposition \ref{PropD*FInvBound}, $\|(Df_x)^{-1}\|_n\leq\Lip_n(f^{-1})$  in a Haar full subset of  $  X$. Pick $x\in X$ satisfying both conditions. Then $D^*f_x:X\to Y^*$ is an  $\R$-linear isomorphism into a subspace of $Y^*$ whose $n$-th distortion is at most the one of $f$.

We  prove the result for an arbitrary $Y$. 
Since $X$ is separable, so is $\mathrm{Im}(f)$.
Pick a separable subspace $Z_0\subset Y$ which completely norms $ \overline{\mathrm{span}}\{\mathrm{Im}(f)\}$, i.e., for all $n\in\N$ and all $[x^*_{ij}]\in \M_{n}( \overline{\mathrm{span}}\{\mathrm{Im}(f)\})$ we have
\[\|[x^*_{ij}]\|_n=\sup\Big\{\|[x^*_{ij}]([x_{pq}])\|\mid k\in\N,\ [x_{pq}]\in B_{\M_k(Z_0)}\Big\}.\]
 Proposition \ref{PropDownLoewenheinSkolenOpSp} gives a separable $\R$-operator space $Z\subset Y$ containing $Z_0$, an ultrafilter $\cU$ on an index set $\Gamma$, and a complete linear isometry $v:Z^\cU\to Y^\cU$ such that $I_{Y}\circ \iota=v\circ I_{Z}$, where $\iota:Z\to Y$ is the inclusion. Let $j:(Z^*)^\cU\to (Z^\cU)^*$ be the map in Proposition \ref{PropMapUltraOfDualDualOfUltra}, and define $u= I_{Y}^*\circ(v^{-1})^*\circ j\circ I_{Z^*} $. Then $u:Z^*\to Y^*$, and for all $n,k\in\N$, all $[z^*_{ij}]\in \M_n(X^*)$, and all $[z_{pq}]\in \M_k(Y)$ we have that
\begin{align*}
I^*_{Y}\circ(v^{-1})^*\circ j\circ I_{Z^*}([z^*_{ij}])([z_{pq}])&=[j\circ I_{Z^*}(z^*_{ij})(v^{-1}(I_{Y}(z_{pq})))]\\
&=[j\circ I_{Z^*}(z^*_{ij})(I_{Z}(z_{pq}))]\\
&=\lim_{m,\cU}[I_{Z^*}(z^*_{ij})(z_{pq})]\\
&=\lim_{m,\cU}[z^*_{ij}(z_{pq})]\\
&=[z^*_{ij}(z_{pq})]\\
&=[z^*_{ij}]([z_{pq}]).
\end{align*}  
So, $u$ is a complete linear isometry into $Y^*$. Let $r:Y^*\to Z^*$ be the restriction operator, so $r$ is a complete contraction. Since $Z$ completely norms the image of $f$, the map $r\circ f:X\to Z^*$ is a complete Lipschitz embedding. Since $Z$ is separable, the result for duals of separable operator spaces (which we proved in the beginning of this proof) implies that $X$ $\lambda$-completely $\R$-isomorphically embeds into $Z^*$ for all  $\lambda\geq  \Lip_n(f)\cdot\Lip_n(f^{-1})$. Therefore, as $Z^*$ is completely $\R$-linearly isometric to a subspace of $Y^*$, it follows that $X$ completely $\R$-isomorphically embeds into $Y^*$.
\end{proof}

\begin{proof}[Proof of Theorem \ref{ThmCompLipEmbDualSpImpliesCompEmbBOUND:COROLLARY}]
This is an immediate consequence of Theorem  \ref{ThmCompLipEmbDualSpImpliesCompEmbBOUND}.
\end{proof}

 \begin{proof}[Proof of Corollary \ref{CorLipThm}]
This follows from   Theorem \ref{ThmCompLipEmbDualSpImpliesCompEmbBOUND:COROLLARY} and Proposition \ref{PropYoplusYBar}.
\end{proof}

\section*{Conflicts of Interest}

On behalf of all authors, the corresponding author states that there is no conflict of interest.
 
   \newcommand{\etalchar}[1]{$^{#1}$}
\providecommand{\bysame}{\leavevmode\hbox to3em{\hrulefill}\thinspace}
\providecommand{\MR}{\relax\ifhmode\unskip\space\fi MR }
\providecommand{\MRhref}[2]{%
  \href{http://www.ams.org/mathscinet-getitem?mr=#1}{#2}
}
\providecommand{\href}[2]{#2}

\end{document}